\DeclareMathOperator{\id}{id}
\DeclareMathOperator{\im}{im}
\newcommand{\NN}{\mathbb{N}}
\newcommand{\ZZ}{\mathbb{Z}}
\newcommand{\RR}{\mathbb{R}}
\newcommand{\CC}{\mathbb{C}}
\newcommand{\CP}{\mathbb{C}P}
\newcommand{\Fix}{\mathrm{Fix}}
\newcommand{\diag}{\mathrm{diag}}
\newcommand{\Hor}{\mathrm{Hor}}
\newcommand{\Ad}{\mathrm{Ad}}
\newcommand{\Cut}{\mathrm{Cut}}
\newcommand{\Cuttil}{\widetilde{\Cut}}
\newcommand{\Isom}{\mathrm{Isom}}
\newcommand{\ev}{\mathrm{ev}}
\newcommand{\D}{\mathcal{D}}
\newcommand{\LL}{\mathcal{L}}
\newcommand{\Sc}{\mathcal{S}}
\newcommand{\Bhat}{\widehat{B}}
\renewcommand{\gg}{\mathfrak{g}}
\newcommand{\hh}{\mathfrak{h}}
\newcommand{\mm}{\mathfrak{m}}
\renewcommand{\aa}{\mathfrak{a}}
\newcommand{\kk}{\mathfrak{k}}
\newcommand{\dd}{\mathrm{d}}
\DeclareMathOperator{\GC}{\mathsf{GC}}
\DeclareMathOperator{\TC}{\mathsf{TC}}
\DeclareMathOperator{\cat}{\mathsf{cat}}
\DeclareMathOperator{\secat}{\mathsf{secat}}
\DeclareMathOperator{\Exp}{Exp}
\DeclareMathOperator{\rk}{rk}
\renewcommand{\setminus}{\smallsetminus}
\theoremstyle{plain}
\newtheorem{theorem}{Theorem}[section]
\newtheorem{prop}[theorem]{Proposition}
\newtheorem{lemma}[theorem]{Lemma}
\newtheorem{cor}[theorem]{Corollary}  
\newtheorem*{thm}{Theorem}
\theoremstyle{definition}
\newtheorem{definition}[theorem]{Definition}
\theoremstyle{remark}
\newtheorem{remark}[theorem]{Remark}
\newtheorem{example}[theorem]{Example}
\numberwithin{equation}{section}
\begin{document}
\setlength{\parindent}{0cm}

\title{Geodesic complexity of homogeneous Riemannian manifolds}
\author{Stephan Mescher}
\address{Institut f\"ur Mathematik \\ Martin-Luther-Universit\"at Halle-Wittenberg \\ Theodor-Lieser-Strasse 5 \\ 06120 Halle (Saale) \\ Germany}
\email{stephan.mescher@mathematik.uni-halle.de}

\author{Maximilian Stegemeyer}
\address{Max Planck Institute for Mathematics in the Sciences, Inselstrasse 22, 04103 Leipzig, Germany} 
\address{Mathematisches Institut, Universit\"at Leipzig, Augustusplatz 10, 04109 Leipzig, Germany}
\email{maximilian.stegemeyer@mis.mpg.de}
\date{\today}

\begin{abstract}
We study the geodesic motion planning problem for complete Riemannian manifolds and investigate their geodesic complexity, an integer-valued isometry invariant introduced by D. Recio-Mitter. Using methods from Riemannian geometry, we establish new lower and upper bounds on geodesic complexity and compute its value for certain classes of examples with a focus on homogeneous Riemannian manifolds. Methodically, we study properties of stratifications of cut loci and use results on their structures for certain homogeneous manifolds obtained by T. Sakai and others.
\end{abstract}
\maketitle

\setcounter{tocdepth}{1} 
\tableofcontents

\section{Introduction}

A topological abstraction of the motion planning problem in robotics was introduced by M. Farber in \cite{FarberTC}. The \emph{topological complexity} of a path-connected space $X$ is denoted by $\TC(X)$ and intuitively given by the minimal number of open sets needed to cover $X \times X$, such that on each of the open sets there exists a continuous motion planner. Here, a continuous motion planner is a map associating with each pair of points a continuous path from the first point to the second point, which varies continuously with the endpoints. Such maps are interpreted as algorithms telling an autonomous robot in the workspace $X$ how it is supposed to move from its position to a desired endpoint.  Unfortunately, the topological complexity of a space does not tell us anything about the feasibility or efficiency of the paths taken by motion planners having $\TC(X)$ domains of continuity, see the discussion in \cite[Introduction]{BlaszCarras}. For example, the explicitly constructed motion planners for configuration spaces of Euclidean spaces by H. Mas-Ku and E. Torres-Giese in \cite{MasTorres} and by Farber in \cite[Section 8]{FarberConf} require few domains of continuity, but have paths among their values which are far from being length-minimizing. Considering a general metric space, paths taken by the motion planners might become arbitrarily long and thus be unsuited for practical motion planning problems. 

Recently, D. Recio-Mitter has introduced the notion of \emph{geodesic complexity} of metric spaces in \cite{RecioMitter}. There, the paths taken by motion planners are additionally required to be length-minimizing between their endpoints. Intuitively, this is seen as the complexity of \emph{efficient} motion planning in metric spaces. Recio-Mitter's seminal article has already triggered research in geodesic complexity, especially  computations of geodesic complexity for interesting classes of examples, see \cite{DavisGeodConf}, \cite{DHRM} and \cite{DRM}.

In this article we study the geodesic complexity of complete Riemannian manifolds and derive new lower and upper bounds for their geodesic complexities by methods from Riemannian geometry. 

Before continuing, we recall the definition of geodesic complexity of geodesic spaces from \cite[Definition 1.7]{RecioMitter} for the special case of a complete Riemannian manifold. Let $(M,g)$ be a complete connected Riemannian manifold and let $PM:=C^0([0,1],M)$ be equipped with the compact-open topology. We recall that a geodesic segment $\gamma:[0,1] \to M$ is called \emph{minimal} if it minimizes the length compared to all rectifiable paths from $\gamma(0)$ to $\gamma(1)$. For simplicity, we shall call a minimal geodesic segment simply a \emph{minimal geodesic}. Consider
$$GM := \{ \gamma \in PM \ | \ \gamma \text{ is a minimal geodesic in }  (M,g)\}$$ 
as a subspace of $PM$ and let $$\pi: GM \to M \times M, \quad \pi(\gamma)=(\gamma(0),\gamma(1)).$$ By standard results from Riemannian geometry, $\pi$ is surjective since $(M,g)$ is complete, see \cite[Corollary 5.8.5]{Petersen}. The \emph{geodesic complexity of $(M,g)$} is given by $\GC(M,g)=r$, where $r \in \NN$ is the smallest integer with the following property: there are $r$ pairwise disjoint locally compact subsets $E_1,\dots,E_r \subset M\times M$ with $\bigcup_{i=1}^r E_i=M \times M$, such that for each $i \in \{1,2,\dots,r\}$  there exists a continuous geodesic motion planner $s_i:E_i \to GM$, i.e. a continuous local section of the map $\pi$. 
If there is no such $r$, we let $\GC(M,g)=+\infty$. Since it is not at all evident how to compute this number explicitly, one is interested in establishing lower and upper bounds for $\GC(M,g)$. This approach is also common in studies of Lusternik-Schnirelmann category or, more generally, sectional categories of fibrations. Given a fibration $p:E \to B$ the \emph{sectional category of $p$} is given by $\secat(p)=k$, where $k \in \NN$ is the minimal number  with the following property: there exists an open cover of $B$ consisting of $k$ open subsets, such that $p$ admits a continuous local section over each of these sets. This notion was introduced under the name \emph{genus of a fibration} by A. Schwarz in \cite{SchwarzGenus}. The topological complexity of a topological space $X$ is for example given as the sectional category of the fibration 
$$PX \to X \times X, \quad \gamma \mapsto (\gamma(0),\gamma(1)).$$Schwarz worked out several ways of obtaining lower and upper bounds for sectional categories which have direct consequences for topological complexity, see e.g. \cite{FarberSurveyTC} or \cite[Chapter 4]{FarberBook} for an overview. 

However, the restriction $\pi:GM \to M \times M$ of this fibration to minimal geodesics is in general \emph{not} a fibration. For example if $M=S^n$ is an $n$-sphere, where $n \in \NN$, and equipped with a round metric, then $\pi^{-1}(\{(p,q)\})$ consists of one element if $q \neq -p$, while it is homeomorphic to $S^{n-1}$ if $q=-p$. In particular, not all preimages are homotopy-equivalent, so $\pi$ is not a fibration in this case. Therefore, Schwarz's results are not applicable to the setting of geodesic complexity. Instead we will derive several lower and upper bounds for the geodesic complexity of Riemannian manifolds using methods from Riemannian geometry.  By \cite[Remark 1.9]{RecioMitter},  every complete Riemannian manifold satisfies $\TC(M) \leq \GC(M,g)$. This formalizes the observation that requiring the paths a robot takes to be as short as possible can increase the complexity of the problem. For example, as shown in \cite[Theorem 1.11]{RecioMitter}, for each $n \geq 3$ there exists a Riemannian metric $g_n$ on the sphere $S^n$ for which $\GC(S^n,g_n)-\TC(S^n) \geq n-3$. In practical applications, a person designing robotic systems that are supposed to move autonomously might not mind a higher complexity. In fact, such a person might accept more instabilities in the motions of robots as a downside if the upside is that the robots move fast and efficiently. 

An important observation is that the difficulties of geodesic motion planning lie in the cut loci of $(M,g)$, as was pointed out by Recio-Mitter in the more general framework of metric spaces in \cite[p. 144]{RecioMitter}. Let $\Cut_p(M)$ denote the cut locus of $p\in M$ in $(M,g)$. We refer to \cite[p. 308]{LeeRiemann}, \cite[p. 219]{Petersen} or Definition \ref{DefCutLocus} below for its definition. If $A \subset M \times M$ satisfies $q \notin \Cut_p(M)$ for each $(p,q) \in A$, then there is a unique minimal geodesic from $p$ to $q$ for each $(p,q) \in A$. The corresponding geodesic motion planner $A \to GM$ is continuous, see also the observations of Z. B\l{}aszczyk and J. Carrasquel Vera from \cite{BlaszCarras}. Thus, to compute the geodesic complexity of a manifold, we need to understand its cut loci.  While the cut locus of a point in a Riemannian manifold is always closed and of measure zero, see \cite[Theorem 10.34.(a)]{LeeRiemann}, little else is known about cut loci in general. 

In \cite[Corollary 3.14]{RecioMitter}, Recio-Mitter establishes a lower bound on the geodesic complexity of metric spaces given in terms of the structure of their cut loci. He considers cut loci which possess stratifications admitting finite coverings. For this purpose, Recio-Mitter introduces the notion of a \emph{level-wise stratified covering} in \cite[Definition 3.8]{RecioMitter}. He then defines a notion of \emph{inconsistency}, which is roughly a condition on the relations between the coverings of the different strata of cut loci by minimal geodesics. It formalizes certain incompatibility properties of families of geodesics connecting a point with points in its cut locus.

Focusing on complete Riemannian manifolds, we will use Riemannian exponential maps to establish a similar inconsistency condition on cut loci, which is more concise than the one from \cite{RecioMitter}. Given a complete Riemannian manifold $M$ and a point $p \in M$ for which $\Cut_p(M)$ admits a stratification, we study the preimages of the different strata of $\Cut_p(M)$ under the Riemannian exponential map $\exp_p:T_pM \to M$. Assume that some $x \in M$ lies in the closure of multiple connected components of the same stratum of $\Cut_p(M)$. We then study the closures of the preimages of all of these components under $\exp_p$ as subsets of $T_pM$. The inconsistency condition demands that these closures have no point in common that is mapped to $x$ by $\exp_p$. We will see that this condition excludes the existence of an open neighborhood $U$ of $x$ with a single continuous geodesic motion planner which connects $p$ to all points of $\Cut_p(M)$ that lie in $U$. 

Note that our definition is only applicable to Riemannian manifolds and not to arbitrary geodesic spaces. One of its benefits in the Riemannian setting is the fact that we can deduce an easier condition than the one introduced by Recio-Mitter. More precisely, we do not require anymore that any point in a cut locus of another point is connected to that point by only finitely many minimal geodesics. Moreover, our inconsistency condition is explicitly stated as an intersection condition on certain subsets of a tangent cut locus instead of using the notion of level-wise stratified coverings as in \cite{RecioMitter}. 

Our main result on inconsistent stratifications is the following theorem.  This result is similar to \cite[Corollary 3.14]{RecioMitter} and our proof is inspired by Recio-Mitter's proof as well.
\begin{thm}[Theorem \ref{TheoremLowerBound}]
Let $(M,g)$ be a closed Riemannian manifold. Assume that there exists a point $p \in M$ for which $\Cut_p(M)$ admits an inconsistent stratification of depth $N \in \NN$. Then
$$\GC(M) \geq  N+1.$$
\end{thm}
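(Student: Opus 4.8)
The plan is to argue by contradiction, following the strategy behind \cite[Corollary 3.14]{RecioMitter} but exploiting the length-minimising condition directly. Assume $\GC(M) = r$ with $r \le N$. Then $M \times M$ decomposes as a union $E_1 \cup \dots \cup E_r$ of pairwise disjoint locally compact sets, each carrying a continuous geodesic motion planner $s_i \colon E_i \to GM$. Restricting to the slice $\{p\} \times M$ and writing $F_i := \{q \in M \mid (p,q) \in E_i\}$, one obtains a partition $M = F_1 \cup \dots \cup F_r$ into locally compact sets together with continuous maps $\sigma_i \colon F_i \to GM$, $q \mapsto s_i(p,q)$, where $\sigma_i(q)$ is a minimal geodesic from $p$ to $q$. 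In particular $\Cut_p(M)$ is covered by the $F_i$, each of which admits a continuous selection of minimal geodesics emanating from $p$, and it suffices to rule out such a covering with $r \le N$ pieces.

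The heart of the proof is to use the inconsistent stratification $\Cut_p(M) = C_0 \supseteq C_1 \supseteq \dots \supseteq C_N$ realising depth $N$ in order to peel off a fresh piece at each of its levels. Starting from a point of the deepest stratum, I would build recursively a sequence of points $q_N, q_{N-1}, \dots, q_0$, with $q_k$ in the $k$-th stratum and lying inside a prescribed, suitably small neighbourhood of $q_{k+1}$, together with pairwise distinct indices $i_N, i_{N-1}, \dots, i_0$ such that $q_k \in F_{i_k}$. At the recursion step one invokes the inconsistency condition at the $k$-th stratum: it guarantees, in every neighbourhood of $q_k$, a point $q_{k-1}$ of the next stratum that cannot lie in any of the pieces $F_{i_N}, \dots, F_{i_k}$ without violating continuity of the corresponding selection, and hence must lie in a piece carrying a new index $i_{k-1}$. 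After running through all $N$ levels one has produced $N+1$ pairwise distinct indices, whence $r \ge N+1$, contradicting $r \le N$.

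The step requiring the most care --- and the one in which the precise definition of an inconsistent stratification is used in full --- is the continuity argument behind the exclusion of the previously chosen pieces. Since the $E_i$, hence the $F_i$, are only locally compact and need not be open, one cannot speak naively of extending $\sigma_{i_j}$ over a neighbourhood of a stratum point. Instead, a locally compact subset of a manifold is locally closed, so in a small enough ball around a stratum point the partition $\{F_i\}$ restricts to one into relatively closed sets; a sequence contained in a fixed piece and converging to that point then stays, along a subsequence, in that single closed piece, and continuity of $\sigma_{i_j}$ together with the fact that a uniform limit of minimal geodesics between convergent endpoints is again a minimal geodesic (here completeness of $(M,g)$ enters) forces a minimal geodesic to the stratum point arising as a limit of minimal geodesics to points of the lower stratum --- contradicting the incompatibility built into the stratification. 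Making this precise entails choosing the auxiliary neighbourhoods nested and shrinking fast enough that a point found near $q_k$ is still incompatible with the selections made at the earlier stages, verifying that the incompatibility survives passage to subsequences, and keeping track of the used indices through all $N$ levels; this coordination is the main obstacle, the rest being routine bookkeeping.
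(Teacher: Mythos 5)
Your overall framework (restrict to the slice $\{p\}\times M$, use local compactness, pass to limits of initial velocities via Proposition \ref{PropVelocityCont}) is in the right spirit, but the recursion step at the heart of your argument is not justified, and this is a genuine gap. Fix a stage of your recursion: $q_k\in S_k$ lies in the piece $F_{i_k}$, and $U$ is a neighbourhood as in Definition \ref{DefInconsistent}, with $Z_1,\dots,Z_s$ the components of $U\cap S_{k-1}$. What the inconsistency condition, combined with continuity of the velocity map, actually yields is only this: if $F_{i_k}\cap Z_j$ accumulated at $q_k$ for \emph{every} $j$, then the single vector $v(\sigma_{i_k}(q_k))$ would lie in $\Cuttil_p(M)\cap\exp_p^{-1}(\{q_k\})\cap\bigcap_{j}\overline{\exp_K^{-1}(Z_j)}$, which is empty; hence \emph{some} component $Z_{j_0}$ contains points arbitrarily close to $q_k$ avoiding $F_{i_k}$. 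This excludes only the one piece that actually contains $q_k$, and only along one component. It gives no control whatsoever over the previously used pieces $F_{i_{k+1}},\dots,F_{i_N}$: their planners are not defined at $q_k$, and if $q_k$ merely lies in the closure of such a piece, different approximating sequences may produce different limit velocities, so no single vector is forced into the empty intersection and no contradiction arises. Nothing prevents an old piece from containing points of $S_{k-1}$ (and of all lower strata) arbitrarily close to $q_k$, so your new point $q_{k-1}$ may land back in an old piece, the indices need not be pairwise distinct, and the count of pieces can stall at two. The sentence ``a point of the next stratum that cannot lie in any of the pieces $F_{i_N},\dots,F_{i_k}$ without violating continuity'' is exactly the unproved step; even strengthening the invariant to $q_k\notin\overline{F_{i_j}}$ for $j>k$ does not propagate, because the point produced above avoids $F_{i_k}$ but possibly not $\overline{F_{i_k}}$.

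The paper's proof circumvents precisely this difficulty by inducting in the opposite direction (from $S_1$ towards the deepest stratum $S_N$) on the quantitative statement: every $x\in S_k$ lies in the closure of at least $k+1$ of the pieces. In the inductive step one assumes that $x\in S_k$ meets at most $k$ closures, say $\overline{E}_1,\dots,\overline{E}_k$ with $x\in E_1$; since by the induction hypothesis every point of $S_{k-1}$ meets at least $k$ closures, and near $x$ only these $k$ pieces are available, \emph{all} of $U\cap S_{k-1}$ --- hence every component $Z_j$ --- lies in $\overline{E}_1$, the closure of the piece containing $x$. A double-sequence and diagonal argument (extracting limits of initial velocities in the compact set $K$) then identifies the limit with the single vector $v_1(x)$, which is thereby forced into $\Cuttil_p(M)\cap\exp_p^{-1}(\{x\})\cap\bigcap_j\overline{\exp_K^{-1}(Z_j)}=\emptyset$, the desired contradiction; the base case $k=1$ is Proposition \ref{PropContMPCut}. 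It is this counting statement about closures, not the inconsistency condition alone, that rules out the reappearance of previously used pieces at deeper levels; without it (or some substitute playing the same role) your downward chain does not reach the bound $\GC(M)\geq N+1$.
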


There is more to say about cut loci of \emph{homogeneous} Riemannian manifolds, i.e. Riemannian manifolds $(M,g)$ whose isometry groups act transitively on $M$. An isometry $\phi: M \to M$ maps the cut locus of $p \in M$ onto the one of $\phi(p)$. Hence, the cut locus of a point is identified with the one of another point by an isometry. This translation property of the cut loci allows us to estimate the geodesic complexity of $M$ from above, once we understand how we can decompose one single cut locus into domains of continuous geodesic motion planners. The following result provides an upper bound for geodesic complexity in terms of a sectional category and the subspace geodesic complexities of considerably smaller subsets of $M \times M$. Here, the subspace geodesic complexity of $A \subset M \times M$ is defined in terms of covers of $A$ by domains of continuous geodesic motion planners.

\begin{thm}[Corollary \ref{CorUpperStrat}]
Let $(M,g)$ be a homogeneous Riemannian manifold and let $\Isom(M,g)$ denote its isometry group. Let $p \in M$ and assume that $\Cut_p(M)$ has a stratification $(S_1,\dots,S_k)$ of depth $k$. Then 
$$\GC(M) \leq \secat(\ev_p:\Isom(M,g)\to M)\cdot  \sum_{i=1}^k \max_{Z_i \in \pi_0(S_i)} \GC_p(Z_i)+1,$$
where $\ev_p(\phi) = \phi(p)$ for all $\phi \in \Isom(M,g)$ and where $\GC_p(Z_i)$ is the subspace geodesic complexity of $\{p\} \times Z_i \subset M \times M$.
\end{thm}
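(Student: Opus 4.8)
The plan is to build the covering of $M \times M$ by combining three pieces of data: (i) the sectional category cover of the evaluation map $e_p \colon \Isom(M,g) \to M$, which lets us transport a fixed cut locus $\Cut_p(M)$ around the manifold in a continuous family; (ii) the stratification $(S_1,\dots,S_k)$ of $\Cut_p(M)$, which decomposes the "hard" part of the motion planning near $p$ into manageable strata; and (iii) local geodesic motion planners on each path-component $Z_i \in \pi_0(S_i)$ realizing $\GC_p(Z_i)$. The single extra domain accounting for the summand $+1$ is $E_0 := \{(x,y) \in M \times M \ | \ y \notin \Cut_x(M)\}$, the complement of the "total cut locus"; as recalled in the excerpt (following B\l{}aszczyk--Carrasquel Vera), there is a unique minimal geodesic between such pairs and the resulting section $E_0 \to GM$ is continuous, and $E_0$ is open, hence locally compact.

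**Next I would** set up the transport mechanism. Let $m = \secat(e_p)$ and let $U_1,\dots,U_m$ be an open cover of $M$ with continuous local sections $\sigma_j \colon U_j \to \Isom(M,g)$ of $e_p$, so $\sigma_j(x)(p) = x$. For a pair $(x,y)$ with $x \in U_j$ and $y \in \Cut_x(M)$, the isometry $\sigma_j(x)$ identifies $\Cut_x(M)$ with $\Cut_p(M)$, sending $y$ to $\sigma_j(x)^{-1}(y) \in \Cut_p(M)$. Since the stratification satisfies $\Cut_p(M) = \bigsqcup_{i=1}^k S_i$ and each $S_i = \bigsqcup Z_i$ over $\pi_0(S_i)$, the point $\sigma_j(x)^{-1}(y)$ lands in a unique path-component $Z_i$ of a unique stratum $S_i$. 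For each such $Z_i$, fix $\GC_p(Z_i)$ pairwise disjoint locally compact sets $F_{i,1},\dots,F_{i,\GC_p(Z_i)}$ covering $\{(p,z) \ | \ z \in Z_i\}$ with continuous minimal-geodesic planners. Pulling these back along $\sigma_j$ — i.e. declaring $(x,y)$ to lie in the appropriate domain when $(p,\sigma_j(x)^{-1}(y))$ does, and defining the geodesic from $x$ to $y$ as the image under $\sigma_j(x)$ of the planned geodesic from $p$ to $\sigma_j(x)^{-1}(y)$ — gives continuous sections, because $\sigma_j$ is continuous, the action of $\Isom(M,g)$ on $GM$ is continuous, and an isometry takes minimal geodesics to minimal geodesics. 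Counting: for each of the $m$ sets $U_j$, each stratum $S_i$ contributes at most $\max_{Z_i \in \pi_0(S_i)} \GC_p(Z_i)$ domains (the disjoint components $Z_i$ within a fixed $S_i$ can share indices since their preimages are disjoint), summed over $i = 1,\dots,k$; together with $E_0$ this yields the asserted bound.

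**The hard part will be** the careful bookkeeping that makes the sets genuinely pairwise disjoint, locally compact, and cover all of $M \times M$, while simultaneously ensuring the sections glue to continuous maps. Two subtleties deserve attention. First, the sets $U_j$ overlap, so a pair $(x,y)$ with $x \in U_j \cap U_{j'}$ must be assigned to exactly one index; the standard remedy is to replace $U_j$ by the disjoint "onion" sets $U_j \setminus (U_1 \cup \dots \cup U_{j-1})$, which are locally compact (locally closed), intersect each domain in a locally compact set, and on which $\sigma_j$ is still continuous — one must check that restricting the locally compact $F_{i,\ell}$-pullbacks to these locally compact strata keeps them locally compact, which follows since a locally closed subset of a locally compact space is locally compact. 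Second, one must verify that for $(x,y)$ with $x \in U_j$, $y \in \Cut_x(M)$, the point $\sigma_j(x)^{-1}(y)$ really does lie in $\Cut_p(M)$ — this is exactly the isometry-invariance of cut loci stated in the excerpt — and that within a single stratum $S_i$ the various path-components $Z_i$, having disjoint total spaces $\{p\} \times Z_i$, allow the re-use of the index set $\{1,\dots,\max_{Z_i} \GC_p(Z_i)\}$ without collisions.

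**Finally I would** assemble the estimate: the total number of domains is $E_0$ plus, over $j = 1,\dots,m$ and $i = 1,\dots,k$, at most $\max_{Z_i \in \pi_0(S_i)} \GC_p(Z_i)$ domains each, giving
$$\GC(M) \leq 1 + \secat(e_p \colon \Isom(M,g) \to M) \cdot \sum_{i=1}^k \max_{Z_i \in \pi_0(S_i)} \GC_p(Z_i),$$
as claimed. I expect the path-component and overlap bookkeeping, rather than any deep geometric input, to be where the proof needs the most care; the geometric content — isometry-invariance of cut loci, uniqueness of minimal geodesics off the cut locus, continuity of the isometry action on $GM$ — is all either standard or already invoked in the excerpt.
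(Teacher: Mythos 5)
Your proposal is correct in outline and takes essentially the same route as the paper: one extra domain carrying the planner away from the cut loci accounts for the $+1$, local sections of $e_p$ coming from $\secat(e_p)$ transport planners on the single cut locus $\Cut_p(M)$ over the whole manifold, and within each stratum the planners on the various components are merged so that each stratum contributes only $\max_{Z\in\pi_0(S_i)}\GC_p(Z)$ domains --- this is exactly the combination of Theorem \ref{TheoremGCCut}, Proposition \ref{PropCutStrat} and Lemma \ref{LemmaSecatDecomp}. Your two deviations are harmless: you use the complement of the total cut locus rather than of the total ordinary cut locus for the $+1$ domain (the continuity statement you need is the one quoted in the introduction, and this set is open, hence locally compact), and you disjointify the $\secat$-cover by the sets $U_j\setminus(U_1\cup\dots\cup U_{j-1})$ rather than by a partition of unity (only that easy direction of Lemma \ref{LemmaSecatDecomp} is needed here). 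The one place where your justification is thinner than the paper's is the index re-use within a single stratum: disjointness of the components of $S_i$ does not by itself make the glued section on a union $\bigcup_j A^j_\ell$ continuous, since a priori one piece could accumulate on a point of another; the paper secures continuity through the separation property that distinct components $Z,Z'$ of the same stratum satisfy $Z\cap\overline{Z'}=\emptyset$ (see the proof of Proposition \ref{PropCutStrat}). With that observation added, your argument yields the stated bound.
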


In the case of compact simply connected irreducible symmetric spaces, we are able to further estimate this upper bound from above in terms of certain sectional categories. This means that for such symmetric spaces we obtain an upper bound on $\GC(M)$ which does not involve any geodesic complexities.

Note that this result produces the first upper bound for geodesic complexity in terms of categorical invariants. Indeed, the only previously known upper bounds were derived by Recio-Mitter in \cite{RecioMitter} either from explicit constructions of geodesic motion planners or from the existence of particularly simple coverings of cut loci. We pick up Recio-Mitter's so-called trivially covered stratifications in this article in the setting of Riemannian manifolds as well. 

In addition to establishing new lower and upper bounds for geodesic complexity, we compute the geodesic complexities of some Riemannian manifolds whose cut loci are well-understood. We will show that every three-dimensional Berger sphere $(S^3,g_\alpha)$ satisfies $\GC(S^3,g_\alpha)=2$
and that $\GC(T^2,g_f)=3$ for every flat metric $g_f$ on the two-dimensional torus. This extends the two-dimensional case of Recio-Mitter's computation of the geodesic complexity of the standard flat $n$-torus from \cite[Theorem 4.4]{RecioMitter}.

\bigskip 

The article is structured as follows: In Section \ref{sec2} we introduce some additional terminology and recall elementary facts about geodesic complexity and cut loci. Section \ref{sec3} contains some basic non-existence results on continuous geodesic motion planners. These results illustrate the difficulties for motion planning that cut loci can create. In Section \ref{sec4} we establish lower bounds on geodesic complexity by two different approaches. On the one hand this is done in terms of principal bundles over the manifold and the topological complexities of their total spaces. On the other hand we study manifolds with stratified cut loci whose stratifications satisfy the abovementioned inconsistency property. We focus on homogeneous Riemannian manifolds in Section \ref{sec5}. More precisely, we show that their geodesic complexities can be estimated from above in terms of the subspace complexities of a single cut locus. In Section \ref{sec6} we consider Riemannian manifolds whose cut loci admit trivially covered stratifications. For such stratifications the relations between a cut locus and its corresponding tangent cut locus are particularly simple. Section \ref{sec7} deals with examples of geodesic complexities. Combining results from the previous sections with new observations, we re-obtain Recio-Mitter's computation of geodesic complexity of the standard flat $n$-torus and determine the geodesic complexity of arbitrary flat $2$-tori. As another class of examples, we explicitly compute the geodesic complexity of three-dimensional Berger spheres. 
 In the final Section \ref{sec8} we consider consequences of the previous results for compact simply connected symmetric spaces. In both situations, the considered cut loci have been studied by T. Sakai. Using the estimates from Section \ref{sec5}, we derive an upper bound for geodesic complexity that is given in terms of the Lie groups from which the symmetric space is built. We further make explicit computations for two examples of symmetric spaces.

\subsection*{Acknowledgements}

The authors thank the anonymous referee for their careful and thoughtful reading of our manuscript. Their suggestions highly  improved the exposition and the clarity of the article. 
\bigskip 

\emph{Throughout this article we assume all manifolds to be smooth and connected and all Riemannian metrics to be smooth.} 
\section{Basic notions and definitions}
\label{sec2}
We begin this article by introducing subspace versions of geodesic complexity for Riemannian manifolds. Afterwards, we recall some basic computations from \cite{RecioMitter} and several facts about cut loci in Riemannian manifolds.

\begin{definition}
Let $(M,g)$ be a complete Riemannian manifold and let $\pi:GM \to M \times M$, $\pi(\gamma)=(\gamma(0),\gamma(1))$. Let $GM$ be equipped with the subspace topology of $C^0([0,1],M)$ with the compact-open topology.
\begin{enumerate}[a)]
\item Let $X \subset M \times M$. A \emph{geodesic motion planner on $X$} is a section $s:X \to GM$ of $\pi$.
\item Given $A \subset M \times M$ we let $\GC_{(M,g)}(A)$ be the minimum $r \in \NN$, for which there are $r$ pairwise disjoint locally compact subsets $E_1,\dots,E_r \subset M\times M$, such that  $A \subset \bigcup_{i=1}^r E_i$ and such that for each $i \in \{1,2,\dots,r\}$ there exists a continuous geodesic motion planner $s_i:E_i \to GM$. If no such $r$ exists, then we put $\GC_{(M,g)}(A):=+\infty$. We call $\GC_{(M,g)}(A)$ the \emph{subspace geodesic complexity of $A$}.
\end{enumerate}
\end{definition}
We recall that the map $\pi$ is surjective for complete Riemannian manifolds. This is a consequence of the Hopf-Rinow theorem, see \cite[Corollary 5.8.5]{Petersen}.

\begin{remark}
\label{RemarkGCelem}
\begin{enumerate}[(1)]
\item If it is obvious which Riemannian metric we are referring to, we occasionally suppress it from the notation and write $$\GC(M) := \GC(M,g) \qquad \text{and} \qquad  \GC_M(A):= \GC_{(M,g)}(A).$$ Note that in particular $\GC(M) = \GC_M(M \times M)$. 
\item Given $p \in M$ and $B \subset M$, we further put
$$\GC_p(B) := \GC_{(M,g)}(\{p\}\times B).$$
\item Our definition differs from Recio-Mitter's original definition by $1$ in the sense that for us $\GC(\{\text{pt}\})=1$, while it would be $0$ in the sense of \cite[Definition 1.7]{RecioMitter}.
\end{enumerate}
\end{remark}
\begin{example}
\label{ExampleGC}
\begin{enumerate}[(1)]
\item  As proven in \cite[Proposition 4.1]{RecioMitter}, if $g_r$ is a round metric on the sphere $S^n$, where $n \in \NN$, then 
$$\GC(S^n,g_r)=\TC(S^n) = \begin{cases}
2 & \text{if $n$ is odd,} \\
3 & \text{if $n$ is even.}
\end{cases}$$
\item Let $g_f$ be the standard flat metric on $T^2$ and let $g_{emb}$ denote the metric induced by the standard embedding $T^2 \hookrightarrow \RR^3$ and the Euclidean metric on $\RR^3$. By \cite[Theorems 4.4 and 5.1]{RecioMitter} it holds that 
$$\GC(T^2,g_f)=3, \qquad \GC(T^2,g_{emb})=4.$$ 
\item It was further shown in \cite[Theorem 1.11]{RecioMitter} that for each $k \in \NN$ with $k \geq 3$ there exists a Riemannian metric $g_k$ on $S^{k}$ with $\GC(S^{k},g_k) \geq k$.
\end{enumerate}
\end{example}

\begin{remark} 
\label{RemarkTC}
Let $(M,g)$ be a complete Riemannian manifold.
\begin{enumerate}[(1)]
\item For all $A\subset M \times M$ it holds that $\TC_M(A) \leq \GC_M(A)$, where $\TC_M(A)$ is the relative topological complexity of $A$ in $M \times M$, see \cite[Section 4.3]{FarberBook}. Here, we made use of the characterization of topological complexity by locally compact subsets shown in \cite[Proposition 4.9]{FarberBook}.
\item It is easy to see that
\begin{equation}
\label{EqGCunion}
\GC_M(A\cup B) \leq \GC_M(A) + \GC_M(B) \qquad \forall A,B \subset M \times M.
\end{equation}
This is shown in analogy with \cite[Proposition 4.24]{FarberBook}.
\end{enumerate}
\end{remark}

As pointed out by Recio-Mitter, the crucial ingredients for the discussion of geodesic complexity are the cut loci of points in the space under consideration. The notions of cut loci in metric and in Riemannian geometry are slightly different from each other. While Recio-Mitter used the former notion in his work, see \cite[Definition 3.1]{RecioMitter}, we will use the latter throughout this manuscript. We will recall the notion of cut loci from Riemannian geometry in the following definition. The relation between the two will be discussed in Remark \ref{RemarkOCut}.(3) below. See also \cite[p. 308]{LeeRiemann} or \cite[p. 219]{Petersen} for the following definition.

\begin{definition}
\label{DefCutLocus}
Let $(M,g)$ be a complete Riemannian manifold and let $p \in M$.
\begin{enumerate}[a)]
\item Let $\gamma :[0,+\infty) \to M$ be a unit-speed geodesic with $\gamma(0) = p$ and $\dot{\gamma}(0)\in T_p M$. The \emph{cut time of $\gamma$} is given by 
$$   t_{\mathrm{cut}}(\gamma) = \sup \{ t > 0\,|\, \gamma|_{[0,t]} \,\,\text{is minimal}\} .   $$
If $t_{\mathrm{cut}}(\gamma)$ is finite, then $t_{\mathrm{cut}}(\gamma)\Dot{\gamma}(0)\in T_p M$ is a \emph{tangent cut point of $p$} and $\gamma(t_\mathrm{cut}(\gamma))\in M$ is a \emph{cut point of $p$ along $\gamma$}. Note that $$\gamma(t_\mathrm{cut}(\gamma))= \exp_p(t_{\mathrm{cut}}(\gamma) \Dot{\gamma}(0)).$$
\item The set of all cut points of $p$ is called the \emph{cut locus of $p$} and denoted by $\Cut_p(M)$. The set of all tangent cut points of $p$ is called the \emph{tangent cut locus of $p$} and denoted by $\Cuttil_p(M)$.
\item The \emph{total cut locus of $M$} is given by
$$\Cut(M) := \bigcup_{p \in M} (\{p\} \times \Cut_p(M)) \subset M \times M. $$
\end{enumerate}
\end{definition}
\begin{example}
Let $n \in \NN$ and let $g$ be a round metric on the sphere $S^n$. Then, by \cite[Example 10.30.(a)]{LeeRiemann}, $\Cut_p(S^n)=\{-p\}$ for every $p \in S^n$. 
\end{example}
Further examples of cut loci will appear in the upcoming sections.

\begin{remark} 
\label{RemarkOCut}
Let $(M,g)$ be a complete Riemannian manifold.
\begin{enumerate}[(1)]
\item In general, $\Cut_p(M)$ does not need to be a submanifold of $M$. H. Gluck and D. Singer have shown in \cite[Theorem A]{glucksinger} that if $\dim M \geq 2$, then there exists a Riemannian metric on $M$ and a point $p\in M$ for which $\Cut_p(M)$is not triangulable.
\item By \cite[Theorem 3.3]{RecioMitter} there exists a continuous geodesic motion planner $$(M \times M) \setminus \Cut(M) \to GM,$$ from which Recio-Mitter derived that $\GC(M)=1$ if $\Cut(M)=\emptyset$. By \cite[Lemma 4.2]{BlaszCarras}, $(M\times M)\setminus \Cut(M)$ is open and therefore locally compact. Using \eqref{EqGCunion}, this shows that
$$\GC(M,g) \leq \GC_M(\Cut(M))+1.$$

\item Let $p\in M$. By \cite[p. 133]{Bishop}, the set of points  $q \in M$ such that there is more than one minimal geodesic from $p$ to $q$ is a dense subset of $\Cut_p(M)$.
This set is also called the \emph{ordinary cut locus} of $p$. In metric geometry, in particular in \cite[Definition 3.1]{RecioMitter}, the ordinary cut locus of a point is called its cut locus. The reader should thus keep in mind that the cut locus of a point as considered in \cite{RecioMitter}, is \emph{not} the cut locus of a point in the sense of this article, but a dense subset of the cut locus. 
\end{enumerate}
\end{remark}

\section{Non-existence results for geodesic motion planners}
\label{sec3}
We begin our study by discussing two non-existence results showing that certain subsets of a Riemannian manifold never admit continuous geodesic motion planners. First, we will study complete oriented Riemannian manifolds and see that the Euler class obstructs the existence of some geodesic motion planners. Then we will show that a complete Riemannian manifold $(M,g)$ has the following property: if a subset $A \subset M \times M$ contains an element of the total cut locus in its interior, then there will be no continuous geodesic motion planner on $A$. Before doing so, we first want to establish a technical proposition that we will make frequent use of throughout the article.

\begin{definition}
\label{DefVelocity}
Let $(M,g)$ be a complete Riemannian manifold. We call the map 
$$v: GM \to TM, \qquad v(\gamma)=\dot\gamma(0), $$
the \emph{velocity map of $GM$}.
\end{definition}

\begin{prop}
\label{PropVelocityCont}
Let $(M,g)$ be a complete Riemannian manifold. The velocity map $v: GM \to TM$ is continuous. 
\end{prop}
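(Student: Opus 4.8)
The plan is to reduce the statement to a sequential statement and then run a compactness-and-subsequence argument using smooth dependence of geodesics on their initial conditions. Since $[0,1]$ is compact and $M$ is metrizable, the compact-open topology on $PM=C^0([0,1],M)$ coincides with the topology of uniform convergence (induced by the sup-distance), so $PM$, and hence its subspace $GM$, is metrizable; therefore it suffices to show that $v$ is sequentially continuous. So I would take a sequence $\gamma_n\to\gamma$ in $GM$, write $p_n:=\gamma_n(0)$, $q_n:=\gamma_n(1)$, $v_n:=\gamma_n'(0)\in T_{p_n}M$, and $p:=\gamma(0)$, $q:=\gamma(1)$, $v:=\gamma'(0)\in T_pM$, and aim to prove $v_n\to v$ in $TM$.

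The first step is to establish a priori bounds. Evaluation at $0$ and at $1$ is continuous on $PM$, so $p_n\to p$ and $q_n\to q$. A minimal geodesic defined on $[0,1]$ has constant speed equal to its length, which in turn equals the distance between its endpoints; hence $|v_n|_g = d(p_n,q_n)\to d(p,q)=|v|_g$ by continuity of the distance function. In particular the $v_n$ are uniformly bounded in norm, and their footpoints $p_n$ eventually lie in a fixed compact neighborhood of $p$ in $M$; consequently, for $n$ large, all $v_n$ lie in one common compact subset $L\subset TM$.

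The second step is the subsequence argument. Let $(v_{n_k})_k$ be an arbitrary subsequence. By compactness of $L$ it has a sub-subsequence, which I again denote by $(v_{n_k})_k$, converging in $TM$ to some $w$; since the bundle projection $\tau\colon TM\to M$ is continuous and $\tau(v_{n_k})=p_{n_k}\to p$, we get $\tau(w)=p$. Completeness of $(M,g)$ guarantees that for every $(x,u)\in TM$ the geodesic $t\mapsto\exp_x(tu)$ is defined on all of $[0,1]$, and smooth dependence of solutions of the geodesic equation on their initial data makes the map $TM\to PM$, $(x,u)\mapsto\bigl(t\mapsto\exp_x(tu)\bigr)$, continuous. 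Therefore $\gamma_{n_k}(t)=\exp_{p_{n_k}}(t v_{n_k})$ converges to $\exp_p(tw)$ uniformly in $t\in[0,1]$. But $\gamma_{n_k}\to\gamma$ uniformly as well, so $\exp_p(tw)=\gamma(t)$ for all $t\in[0,1]$; differentiating at $t=0$ yields $w=\tfrac{d}{dt}\big|_{t=0}\exp_p(tw)=\gamma'(0)=v$. Thus every subsequence of $(v_n)_n$ has a sub-subsequence converging to $v$, whence $v_n\to v$ in $TM$, and $v$ is continuous.

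I expect the main point requiring care to be purely bookkeeping rather than deep: the vectors $v_n$ a priori live in different tangent spaces $T_{p_n}M$, so all convergence must be interpreted in the total space $TM$, and it is the compactness extracted in the first step that makes the subsequence trick legitimate. The only substantial external input is the continuity of the "time-one restriction of the exponential map" $TM\to PM$, which is the standard global smooth-dependence statement for the geodesic flow, available here because $(M,g)$ is complete.
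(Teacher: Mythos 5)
Your proof is correct, but it follows a genuinely different route from the paper's. The paper also reduces to sequences and also starts from the observation that $|v(\gamma_n)|=\dd_M(\gamma_n(0),\gamma_n(1))\to|v(\gamma)|$, but then it recovers the velocities by \emph{inverting} the extended exponential map $\Exp(p,u)=(p,\exp_p u)$: it restricts $\Exp$ to a small disk bundle $D_rK$ over a compact neighborhood $K$ of $\gamma(0)$ where it is a diffeomorphism, uses local Lipschitz continuity of $\Exp_K^{-1}$ to get $v(\gamma_n)=\Exp_K^{-1}(\gamma_n(0),\gamma_n(1))\to v(\gamma)$ from convergence of the endpoints, and then needs a separate rescaling argument (replacing $\gamma_n(t)$ by $\gamma_n(ct)$) to handle limit geodesics longer than the radius $r$. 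You instead use the same norm identity only to trap the vectors $v_n$ in a fixed compact subset of $TM$, extract convergent subsequences there, and identify every subsequential limit $w$ by running the exponential map \emph{forward}: continuity of $(x,u)\mapsto(t\mapsto\exp_x(tu))$ into $PM$ plus uniqueness of limits in the Hausdorff space $PM$ forces $\exp_p(tw)=\gamma(t)$ and hence $w=v(\gamma)$. Your argument buys a cleaner proof with no case distinction, no injectivity-radius or Lipschitz considerations, and no rescaling, at the price of a (harmless) nonconstructive subsequence extraction; the paper's argument is more hands-on and gives a quantitative local statement (Lipschitz dependence of $v$ on the endpoints within $V_rK$), which is not needed elsewhere in the paper. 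All the ingredients you invoke (metrizability of $GM$, compactness of bounded disk bundles over compact sets, continuity of the geodesic map by the exponential law, completeness guaranteeing definition on $[0,1]$) are standard and correctly deployed.
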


\begin{proof}
Let $(\gamma_n)_{n\in\mathbb{N}}$ be a convergent sequence in $GM$ and let $\gamma:= \lim_{n \to \infty}\gamma_n\in GM$. By our choice of topology on $GM$, this means that
\begin{equation}
\label{EqCOpoint}
\lim_{n \to \infty} \gamma_n(t)=\gamma(t) \quad  \forall t \in [0,1].
\end{equation}
We need to show that $\lim_{n \to \infty} v(\gamma_n)=v(\gamma)$. Let $\LL_g: GM\to \RR$ denote the length of a minimal geodesic with respect to $g$. From the minimality property of the curves,  we derive that
$$\lim_{n \to \infty} \LL_g(\gamma_n)= \lim_{n \to \infty} \dd_M(\gamma_n(0),\gamma_n(1))= \dd_M(\gamma(0),\gamma(1))=\LL_g(\gamma),$$
where $\dd_M: M \times M \to \RR$ is the distance function induced by $g$. Let $|\cdot|:TM \to \RR$ denote the fiberwise norm induced by $g$. Since $\LL_g(\alpha) =|\dot\alpha(0)|=|v(\alpha)|$ for each $\alpha \in GM$, it follows that
\begin{equation}
\label{EqNormsConv}
\lim_{n \to \infty} |v(\gamma_n)| = |v(\gamma)|.
\end{equation}
To show the continuity of $v$, we need to derive that $\lim_{n\to \infty} v(\gamma_n)=v(\gamma)$. Let $$\Exp:TM \to M \times M, \quad \Exp(p,v)=(p,\exp_p(v)),$$ 
be the extended exponential map. Let $K \subset M$ be a compact neighborhood of $\gamma(0)$ and let $$\rho_0:=\sup \{r > 0 \ | \ \exp_x|_{B_r(0)} \text{ is injective} \ \forall x\in K\},$$ where $B_r(0)$ denotes the open $n$-ball around the origin in the respective tangent space. Since $K$ is compact, $\rho_0>0$ by \cite[Lemma 6.16]{LeeRiemann}. For $r \in (0,\rho_0)$ we put 
$$D_r K := \{(p,v) \in TM \  | \ p \in K, \ \|v\|\leq r\},$$ 
i.e. $D_rK$ is the closed disk bundle over $K$ of radius $r$. Then $\Exp$ maps $D_rK$  diffeomorphically onto its image 
$$V_rK:= \Exp(D_rK)= \{(p,q) \in K \times M \ | \  \dd_M(p,q)\leq r\}.$$
Let $\Exp_K:D_rK \to V_rK$ denote the corresponding restriction of $\Exp$. Since $\Exp_K:D_rK \to V_rK$ is a diffeomorphism, its inverse $\Exp_K^{-1}:V_rK \to D_rK$ is a diffeomorphism as well. Thus, if we choose and fix a distance function $\dd_{TM}:TM \times TM \to \RR$ which induces the topology of $TM$, then  $\Exp^{-1}_K:V_rK \to D_rK$ is locally Lipschitz-continuous with respect to $\dd_M\times \dd_M$ and $\dd_{TM}$. We further observe that for all $\alpha \in GM$ with $\alpha(0)\in K$ and  $\dd_M(\alpha(0),\alpha(1)) \leq r$ it holds that 
$$\Exp^{-1}_K(\alpha(0),\alpha(1))=v(\alpha).$$
We consider two different cases: \medskip 

\textit{Case 1:} \enskip Assume that $|v(\gamma)|< r$. This implies that $(\gamma(0),\gamma(1))\in V_rK$. Then, by \eqref{EqNormsConv}, there exists $n_0\in \NN$ with 
$$\gamma_n(0)\in K \qquad \text{and} \qquad  |v(\gamma_n)|\leq r \qquad \forall n \geq n_0.$$ Thus, $(\gamma_n(0),\gamma_n(1))\in V_rK$ for all $n \geq n_0$. Let $C$ be a local Lipschitz constant for $\Exp^{-1}_K$ in a neighborhood of $(\gamma(0),\gamma(1))$. Then for sufficiently big $n \in \NN$ 
\begin{align*}
\dd_{TM}(v(\gamma_n),v(\gamma)) &= \dd_{TM}\left(\Exp^{-1}_K(\gamma_n(0),\gamma_n(1)),\Exp^{-1}_K(\gamma(0),\gamma(1))\right) \\
&\leq C (\dd_M(\gamma_n(0),\gamma(0))+\dd_M(\gamma_n(1),\gamma(1))).
\end{align*}
By \eqref{EqCOpoint}, this yields $\lim_{n \to \infty} \dd_{TM}(v(\gamma_n),v(\gamma))=0$, which we wanted to show.
\medskip 

\textit{Case 2:} \enskip Consider the case that $|v(\gamma)| \geq r$. By \eqref{EqNormsConv}, there exists $n_1 \in \NN$, such that 
$$\gamma_n(0) \in K \qquad  \text{and} \qquad  |v(\gamma_n)|<|v(\gamma)|+1 \qquad  \forall n \geq n_1.$$ 
Let $c := r/(|v(\gamma)|+1)  \in (0,1)$ and put $\xi_n := c\cdot v(\gamma_n)$ for each $n \in \NN$ and $\xi := c\cdot v(\gamma)$. Then $\xi \in D_rK$ and $\xi_n \in D_rK$ for all $n \geq n_1$. If we define 
$$\tilde\gamma_n,\tilde\gamma:[0,1] \to M, \qquad \tilde\gamma_n(t):=\gamma_n(ct),  \ \ \tilde\gamma(t):= \gamma(ct) \quad  \forall t \in [0,1],$$ 
then $\tilde\gamma,\tilde\gamma_n \in GM$ with $v(\tilde\gamma)=\xi$ and $v(\tilde\gamma_n)=\xi_n$ for each $n \geq n_1$. Since $(\gamma_n)_{n \in \NN}$ converges to $\gamma$ in the $C^0$-topology, it easily follows that $\lim_{n \to \infty} \tilde\gamma_n = \tilde\gamma$ in the $C^0$-topology as well. Thus, it follows from Case 1 that $\lim_{n \to \infty} \xi_n = \xi$, which obviously yields $\lim_{n \to \infty} v(\gamma_n)=v(\gamma)$. 
\end{proof}

In the following proposition, we observe that the Euler class of an oriented manifold can obstruct the existence of geodesic motion planners. 

\begin{prop}
\label{PropGraphGeodMP}
Let $(M,g)$ be a complete oriented Riemannian manifold whose Euler class is non-vanishing. Let $f:M \to M$ be a continuous map with $f(p)\neq p$ for all $p \in M$. If $A\subset M \times M$ satisfies $\mathrm{graph} \; f \subset A$, then there will be no continuous geodesic motion planner on $A$. 
\end{prop}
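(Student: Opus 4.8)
The plan is to argue by contradiction: suppose $s: A \to GM$ is a continuous geodesic motion planner, and restrict it to the graph of $f$. Since $\mathrm{graph}\, f \subset A$ and $\mathrm{graph}\, f \cong M$ via $p \mapsto (p, f(p))$, we obtain a continuous map $\sigma: M \to GM$, $\sigma(p) = s(p, f(p))$, assigning to each $p$ a minimal geodesic from $p$ to $f(p)$. Composing with the velocity map $v: GM \to TM$ from Proposition \ref{PropVelocityCont}, which is continuous, we get a continuous section $X: M \to TM$, $X(p) = v(\sigma(p)) = \sigma(p)'(0)$, of the tangent bundle.

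The key point is that $X$ is nowhere zero. Indeed, $\sigma(p)$ is a minimal geodesic from $p$ to $f(p)$, and since $f(p) \neq p$ for all $p$, this geodesic is non-constant, so its initial velocity $\sigma(p)'(0)$ is a nonzero vector in $T_p M$. Thus $X$ is a nowhere-vanishing continuous vector field on $M$. But a closed manifold (or more generally, by the standard relationship between the Euler class and zeros of sections of $TM$) admits a nowhere-vanishing vector field only if its Euler class vanishes. This contradicts the hypothesis that the Euler class of $M$ is non-vanishing, completing the proof.

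There is a small subtlety to address: the Euler class obstruction to nowhere-zero vector fields is cleanest for closed manifolds (where it equals the Euler characteristic evaluated on the fundamental class), so one should note that a non-vanishing Euler class forces $M$ to be closed and even-dimensional — or simply invoke the general fact that a nonzero Euler class $e(TM) \in H^n(M)$ obstructs the existence of a nowhere-vanishing section of $TM$. One also uses here that $\mathrm{graph}\, f$, with the subspace topology from $M \times M$, is homeomorphic to $M$ via the obvious map, so that $\sigma$ is genuinely continuous on all of $M$.

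The main (and essentially only) obstacle is verifying the continuity chain $M \to \mathrm{graph}\, f \hookrightarrow A \xrightarrow{s} GM \xrightarrow{v} TM$, which reduces to citing Proposition \ref{PropVelocityCont} for the last arrow and the homeomorphism $\mathrm{graph}\, f \cong M$ for the first; everything else is the standard Euler-class argument. No delicate estimates are needed here since the hard analytic work has already been done in Proposition \ref{PropVelocityCont}.
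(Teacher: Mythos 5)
Your argument is correct and is essentially identical to the paper's proof: restrict the hypothetical motion planner to $\mathrm{graph}\,f$, compose with the velocity map $v$ from Proposition \ref{PropVelocityCont} to obtain a continuous vector field, note it is nowhere zero since $f$ is fixed-point free, and contradict the non-vanishing of the Euler class. The additional remarks on continuity of $p \mapsto s(p,f(p))$ and on the Euler-class obstruction are fine and do not change the substance.
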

\begin{proof}
Assume by contradiction that there exists a continuous geodesic motion planner $s:A \to GM$. Then by Proposition \ref{PropVelocityCont} the map 
$$g: M \to TM, \qquad g(p) = (v\circ s)(p,f(p)),$$
is a continuous vector field, where $v$ is the velocity map. Since $f(p) \neq p$ for each $p$, the geodesic $s(p,f(p))$ is non-constant for all $p \in M$. Hence, $g(p)\neq 0$ for all $p \in M$. But such a vector field can not exist since the Euler class of $M$ is non-vanishing. This shows the claim. 
\end{proof}

\begin{cor}
\label{CorfmapstoOCut}
Let $(M,g)$ be a complete oriented manifold whose Euler class is non-vanishing. Let $f:M \to M$ be continuous and fixed-point free. Then for every Riemannian metric $g$ on $M$ there exists $p \in M$ with $f(p) \in \Cut_p(M,g)$.
\end{cor}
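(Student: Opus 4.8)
The plan is to deduce this as an immediate consequence of Proposition \ref{PropGraphGeodMP}. First I would observe that the corollary is essentially the contrapositive of the proposition, specialized to the case $A = M \times M$. Concretely, suppose for contradiction that there is a continuous fixed-point free map $f : M \to M$ such that $f(p) \notin \OCut_p(M,g)$ for \emph{every} $p \in M$. The strategy is to show that this would yield a continuous geodesic motion planner on all of $M \times M$ restricted to $\mathrm{graph}\, f$, contradicting Proposition \ref{PropGraphGeodMP} (whose hypotheses are met since the Euler class is non-vanishing and $f$ is fixed-point free).

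The key step is to produce such a motion planner. Here I would invoke the observation recorded in Remark \ref{RemarkOCut}(2), quoting \cite[Theorem 3.3]{RecioMitter}: there is a continuous geodesic motion planner $s_0 : (M \times M) \setminus \OCut(M) \to GM$. Since by assumption $f(p) \notin \OCut_p(M,g)$ for all $p$, the graph of $f$ is contained in $(M \times M) \setminus \OCut(M)$: indeed $(p, f(p)) \in \OCut(M)$ would mean precisely $f(p) \in \OCut_p(M,g)$ by the definition of the total ordinary cut locus. Hence the restriction of $s_0$ to $\mathrm{graph}\, f$, or indeed $s_0$ itself on the domain $A := (M\times M)\setminus \OCut(M) \supset \mathrm{graph}\, f$, is a continuous geodesic motion planner on a set containing $\mathrm{graph}\, f$. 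Applying Proposition \ref{PropGraphGeodMP} with this $A$ gives the contradiction, and the corollary follows.

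I do not anticipate a serious obstacle here; the content is entirely in the two cited results, and the argument is a matter of correctly threading the definitions. The one point requiring a moment's care is making sure the set $A$ on which the motion planner is defined is genuinely \emph{locally compact} or at least that the hypotheses of Proposition \ref{PropGraphGeodMP} are literally satisfied — but Proposition \ref{PropGraphGeodMP} only requires the \emph{existence} of some $A \subset M \times M$ with $\mathrm{graph}\, f \subset A$ carrying a continuous geodesic motion planner, with no compactness assumption on $A$, so $A = (M\times M)\setminus\OCut(M)$ works directly. It is also worth noting explicitly that the statement should be read with the metric $g$ fixed from the outset (the phrase "for every Riemannian metric $g$" re-quantifying $g$ inside the statement is slightly redundant given the hypothesis, but the proof goes through for each fixed complete $g$ verbatim).
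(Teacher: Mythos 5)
Your argument is correct and is essentially identical to the paper's proof: assume $f(p)\notin\OCut_p(M,g)$ for all $p$, conclude $\mathrm{graph}\,f\subset (M\times M)\setminus\OCut(M,g)$, and contradict Proposition \ref{PropGraphGeodMP} using the continuous geodesic motion planner on $(M\times M)\setminus\OCut(M,g)$ from Remark \ref{RemarkOCut}.(2). Your remark on the redundant quantification over $g$ is also accurate.
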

\begin{proof}
Assume by contradiction that there was such a metric $g$ for which $f(p) \notin \Cut_p(M,g)$ for all $p \in M$. Then $\mathrm{graph} \;f$ lies in $(M \times M) \setminus \Cut(M,g)$. But since there exists a continuous geodesic motion planner on $(M \times M) \setminus \Cut(M,g)$, see Remark \ref{RemarkOCut}.(3), this contradicts Proposition \ref{PropGraphGeodMP}. Hence, such a metric does not exist.
\end{proof}

\begin{cor}
Let $n \in \NN$. For every Riemannian metric $g$ on $S^{2n}$ there exists $p \in S^{2n}$, such that $-p \in \Cut_p(S^{2n},g)$.
\end{cor}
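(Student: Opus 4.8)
The plan is to deduce the statement as a direct application of Corollary \ref{CorfmapstoOCut} to the antipodal map. First I would take $M = S^{2n}$ with $f = a \colon S^{2n} \to S^{2n}$, $a(p) = -p$. This map is smooth, hence continuous, and it is fixed-point free since $-p = p$ has no solution on the sphere. Since $S^{2n}$ is compact, every Riemannian metric $g$ on it is complete, so the hypothesis of Corollary \ref{CorfmapstoOCut} on completeness is automatically satisfied.

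Next I would check that the Euler class of $S^{2n}$ is non-vanishing. For a closed oriented manifold the Euler class evaluated on the fundamental class is the Euler characteristic, and $\chi(S^{2n}) = 1 + (-1)^{2n} = 2 \neq 0$; hence the Euler class is non-zero. (Alternatively, one invokes the classical fact that $S^{2n}$ admits no nowhere-vanishing vector field, which is precisely what is used inside the proof of Proposition \ref{PropGraphGeodMP}.)

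With these two verifications in place, Corollary \ref{CorfmapstoOCut} applies verbatim: for every Riemannian metric $g$ on $S^{2n}$ there exists $p \in S^{2n}$ with $a(p) = -p \in \OCut_p(S^{2n},g)$, which is the claim. There is essentially no obstacle here beyond confirming that the hypotheses of Corollary \ref{CorfmapstoOCut} hold; the only point worth stating explicitly is that completeness comes for free from compactness of the sphere and that non-vanishing of the Euler class reduces to $\chi(S^{2n}) = 2$.
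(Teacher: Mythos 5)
Your proposal is correct and follows exactly the paper's route: the corollary is obtained by applying Corollary \ref{CorfmapstoOCut} to $M=S^{2n}$ with the antipodal map $f(x)=-x$. Your explicit verification of the hypotheses (fixed-point freeness, completeness via compactness, and non-vanishing Euler class since $\chi(S^{2n})=2$) is accurate and merely spells out what the paper leaves implicit.
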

\begin{proof}
Apply Corollary \ref{CorfmapstoOCut} to the case of $M=S^{2n}$ and $f(x)=-x$.
\end{proof}

\begin{remark}
Our Corollary \ref{CorfmapstoOCut} is complementary to results of M. Frumosu and S. Rosenberg from \cite[p. 338]{FrumosuRose}. In this article the authors studied far point sets, i.e. sets of points mapped to their cut loci under self-maps of a Riemannian manifold, in a very general way. Frumosu and Rosenberg focused on self-maps whose far point sets are infinite and established connections to the Lefschetz numbers of such maps.
\end{remark}

In \cite[Remark 3.17]{RecioMitter}, Recio-Mitter mentioned that whenever a subset of $M \times M$ contains a point of the total cut locus in its interior, there is no continuous geodesic motion planner defined on that subset. For the sake of completeness, we report here a proof in the case of Riemannian manifolds.


\begin{prop}
\label{PropContMPCut}
Let $(M,g)$ be a complete Riemannian manifold, $p \in M$, $q \in \Cut_p(M)$ and let $U\subset M$ be an open neighborhood of $q$. Then there is no continuous geodesic motion planner on $\{p\} \times U$.
\end{prop}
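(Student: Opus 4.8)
The statement says: for $q \in \Cut_p(M)$ and $U$ an open neighborhood of $q$, there is no continuous geodesic motion planner on $\{p\}\times U$. I want to argue by contradiction. Suppose $s \colon \{p\}\times U \to GM$ is a continuous section of $\pi$. Composing with the velocity map $v$ from Proposition \ref{PropVelocityCont}, I obtain a continuous map $V \colon U \to T_pM$, $V(x) = v(s(p,x)) = s(p,x)'(0)$, with the property that $\exp_p(V(x)) = x$ and $|V(x)| = d_M(p,x)$ for all $x \in U$ (the latter because $s(p,x)$ is a minimal geodesic). So a continuous geodesic motion planner on $\{p\}\times U$ is exactly a continuous "minimizing logarithm" $V$ on $U$.

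**Deriving the contradiction.** The key point is that $q$ is a cut point of $p$, so there is a unit-speed minimal geodesic $\gamma\colon[0,L]\to M$ from $p$ to $q$ (with $L = d_M(p,q)$) that fails to be minimal past $q$; moreover $\exp_p$ is critical at $L\dot\gamma(0)$ or $q$ is reached by two distinct minimal geodesics (the two standard characterizations of cut points). I would use the following classical fact: for any $t<L$, the point $\gamma(t)$ is \emph{not} a cut point of $p$, so near $\gamma(t)$ the minimizing geodesic from $p$ is unique and $\exp_p$ is a diffeomorphism from a neighborhood of $t\dot\gamma(0)$ onto a neighborhood of $\gamma(t)$. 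Choose a sequence $x_n \to q$ with each $x_n$ lying on $\gamma$, say $x_n = \gamma(L - 1/n)$; by uniqueness of the minimal geodesic to $x_n$ and continuity (Proposition \ref{PropVelocityCont} applied to the canonical minimizers, or directly), we must have $V(x_n) = (L-1/n)\dot\gamma(0)$, hence $V(x_n) \to L\dot\gamma(0)$. By continuity of $V$, this forces $V(q) = L\dot\gamma(0)$. The same argument applied along a \emph{second} minimal geodesic $\sigma$ to $q$ — if $q \in \OCut_p(M)$ — would force $V(q) = L\dot\sigma(0) \neq L\dot\gamma(0)$, a contradiction.

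**Handling the non-ordinary case.** If $q \notin \OCut_p(M)$, i.e. there is a unique minimal geodesic to $q$ but $\exp_p$ is singular at $L\dot\gamma(0)$, the above "two geodesics" trick is unavailable and one must work harder. Here I would instead exploit the singularity of $\exp_p$: since $q$ is a cut point with a unique minimizer, $L\dot\gamma(0)$ is a conjugate point, so $d(\exp_p)$ at $L\dot\gamma(0)$ has nontrivial kernel. The map $V$ would then be a continuous local right inverse of $\exp_p$ near $q$, with $V(q) = L\dot\gamma(0)$; I would derive a contradiction from the fact that a continuous section of a map that is a local homeomorphism onto its image everywhere except at an isolated-type singularity cannot extend continuously across the singularity — more concretely, pick points $y_n \to q$ with $y_n \notin \Cut_p(M)$ (possible since $\Cut_p(M)$ has empty interior) and distinct preimages under $\exp_p$ accumulating at $\ker d(\exp_p)_{L\dot\gamma(0)}$-related directions; since $y_n\notin\Cut_p(M)$, $V(y_n)$ is the unique minimal lift, and I would show these lifts cannot converge to $L\dot\gamma(0)$, contradicting continuity of $V$ at $q$. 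Alternatively, and more cleanly: approach $q$ along $\gamma$ from \emph{beyond} $L$, i.e. consider $z_n = \gamma(L)$ reached as a limit of points $\gamma(L+1/n)$ whose minimizers from $p$ are realized by geodesics with initial velocities bounded away from $\dot\gamma(0)$ (this is exactly the failure of minimality of $\gamma$ past $L$), forcing again a jump in $V$.

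**Main obstacle.** The substantive difficulty is precisely the non-ordinary (conjugate-point-only) case: the clean contradiction comes from two distinct minimal geodesics, and when $q$ is only a conjugate cut point one needs a more delicate argument using that $\Cut_p(M)$ is closed with empty interior (hence $U\setminus\Cut_p(M)$ is dense in $U$) together with the local behavior of $\exp_p$ near a conjugate point to produce two nearby points of $U\setminus\Cut_p(M)$ whose unique minimizing lifts are far apart, contradicting uniform continuity of $V$ on a small compact neighborhood of $q$. I expect the authors likely reduce to the ordinary case using density of $\OCut_p(M)$ in $\Cut_p(M)$ (Remark \ref{RemarkOCut}.(1)): if $U$ meets $\Cut_p(M)$ at all it meets $\OCut_p(M)$, so one may as well assume $q\in\OCut_p(M)$ from the start and run the two-geodesics argument, which sidesteps the obstacle entirely.
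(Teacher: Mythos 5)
Your proposal is correct and follows essentially the same route as the paper: reduce to the case $q\in\OCut_p(M)$ using density of the ordinary cut locus in $\Cut_p(M)$, then use the continuity of the velocity map together with the forced values of the planner at points strictly before the cut time along two distinct minimal geodesics to $q$, obtaining two different limits at $q$ and hence a contradiction. Your speculative handling of the conjugate-point-only case is unnecessary, exactly as you anticipated at the end, since the density reduction is precisely the paper's first step.
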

\begin{proof}
As discussed in Remark \ref{RemarkOCut}.(3), the set of points $r\in M$ for which there is more than one minimal geodesic from $p$ to $r$ is dense in $\Cut_p(M)$. Hence, $U$ contains a point $q_0$ such that there are at least two minimal geodesics from $p$ to $q_0$.
In the following, we thus assume w.l.o.g. that $q$ itself has this property. Assume that a continuous geodesic motion planner $s:\{p\}\times U \to GM$ existed. By our choice of $q$, there are $\gamma_1,\gamma_2 \in GM$ with $$\gamma_1\neq \gamma_2, \qquad \gamma_1(0)=\gamma_2(0)=p \qquad \text{and} \qquad \gamma_1(1)=\gamma_2(1)=q.$$ Let $(t_n)_{n \in \NN}$ be a sequence in $(0,1)$ with $\lim_{n \to \infty} t_n=1$ and $\gamma_1(t_n),\gamma_2(t_n) \in U$ for all $n \in \NN$. One checks without difficulties that $\gamma_1(t) \neq \gamma_2(t)$ for all $t \in (0,1)$, such that in particular $\gamma_1(t_n)\neq\gamma_2(t_n)$ for all $n \in \NN$.

By definition of a cut locus, it follows for all $r \in (0,1)$ and $i \in \{1,2\}$ that 
$$\gamma_{i,r} \in GM, \quad \gamma_{i,r}(t):=\gamma_i(rt),$$ is the unique minimal geodesic from $p$ to $\gamma_i(r)$. In particular, this shows that necessarily
\begin{equation}
\label{Eqsgamma}
s(p,\gamma_i(t_n))= \gamma_{i,t_n} \qquad \forall n \in \NN, \ i \in \{1,2\}.
\end{equation}
Let $v:GM \to TM$ be the velocity map. It follows from Proposition \ref{PropVelocityCont} that $$v \circ s:\{p\} \times U \to TM$$ is continuous. Since $\gamma_1 \neq \gamma_2$, there are $\xi_1,\xi_2 \in T_pM$ with $\xi_1 \neq \xi_2$, such that $\gamma_1(t)=\exp_p(t\xi_1)$ and $\gamma_2(t)=\exp_p(t\xi_2)$ for all $t \in [0,1]$. By \eqref{Eqsgamma} and the fact that the differential of $\exp_p$ in $0$ is $\id_{T_pM}$, we thus obtain that
$$\lim_{n \to \infty} (v \circ s)(p,\gamma_i(t_n)) = \lim_{n \to \infty} \dot\gamma_{i,t_n}(0)= \lim_{n \to \infty} t_n \xi_i = \xi_i.$$
In particular, $\lim_{n \to \infty} (v\circ s)(p,\gamma_1(t_n))\neq \lim_{n \to \infty} (v\circ s)(p,\gamma_2(t_n))$. This contradicts the continuity of $s$, since by assumption $(v \circ s)(p,\gamma_1(1))=(v \circ s)(p,\gamma_2(1))$. Thus, such a continuous $s$ does not exist.
\end{proof}

The previous proposition has an immediate consequence in terms of geodesic complexity.
 
\begin{cor}
Let $(M,g)$ be a complete Riemannian manifold and let $A\subset M \times M$ be a locally compact subset with $$\mathrm{int}(A) \cap \Cut(M) \neq \emptyset,$$ 
where $\mathrm{int}(A)$ is the interior of $A$ as a subset of $M \times M$. Then $\GC_M(A) \geq 2$. 
\end{cor}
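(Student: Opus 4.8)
The plan is to reduce the statement to Proposition \ref{PropContMPCut} by exhibiting a slice of the form $\{p\} \times U$ inside $A$, with $U$ an open neighborhood of a cut point of $p$.

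First I would pick a point $(p,q) \in \mathrm{int}(A) \cap \Cut(M)$, which exists by hypothesis; by definition of $\Cut(M)$ this means $q \in \Cut_p(M)$. Since $(p,q)$ lies in the interior of $A$ and the products of open sets form a basis of the topology of $M \times M$, there are open sets $V \ni p$ and $U \ni q$ in $M$ with $V \times U \subset A$. In particular $\{p\} \times U \subset A$, and $U$ is an open neighborhood of the cut point $q \in \Cut_p(M)$.

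Next I would argue by contradiction. Since $A \neq \emptyset$, we have $\GC_M(A) \geq 1$, so it suffices to rule out $\GC_M(A) = 1$. If $\GC_M(A) = 1$, then there is a locally compact subset $E_1 \subset M \times M$ with $A \subset E_1$ together with a continuous geodesic motion planner $s_1 : E_1 \to GM$. Restricting $s_1$ to $\{p\} \times U \subset A \subset E_1$ yields a continuous geodesic motion planner on $\{p\} \times U$, since a restriction of a continuous map is continuous and a restriction of a section of $\pi$ is again a section of $\pi$. But this contradicts Proposition \ref{PropContMPCut}, which asserts that no continuous geodesic motion planner exists on $\{p\} \times U$ when $U$ is an open neighborhood of a point of $\Cut_p(M)$. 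Hence $\GC_M(A) \neq 1$, and therefore $\GC_M(A) \geq 2$.

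There is essentially no hard step here: once the product neighborhood is extracted, the result is immediate from Proposition \ref{PropContMPCut}. The only mild subtlety is bookkeeping with the definition of $\GC_M$, namely noticing that the case $r=1$ forces $A$ itself to be contained in a single domain $E_1$ carrying a continuous section, so that the non-existence result on the slice $\{p\}\times U$ applies directly; and that $A\neq\emptyset$ rules out $r=0$.
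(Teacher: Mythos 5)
Your proof is correct and follows essentially the same route as the paper: extract a product neighborhood $V\times U\subset\mathrm{int}(A)$ around a point $(p,q)$ of $\Cut(M)$ and restrict a hypothetical continuous geodesic motion planner to the slice $\{p\}\times U$, contradicting Proposition \ref{PropContMPCut}. The only cosmetic difference is that you phrase the contradiction via the single domain $E_1\supset A$ in the definition of $\GC_M(A)=1$, while the paper directly rules out a continuous planner on $A$ itself; these are equivalent.
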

\begin{proof}
Assume that there was a continuous geodesic motion planner $s:A \to GM$. Let $(p,q) \in \mathrm{int}(A) \cap \Cut(M)$. By definition of the product topology, there are open neighborhoods $U$ of $p$ and $V$ of $q$ with $U \times V \subset \mathrm{int}(A)$, so in particular, $s|_{\{p\}\times V}$ would be a continuous geodesic motion planner. Since  $q \in \Cut_p(M)$, this contradicts Proposition \ref{PropContMPCut}, so there is no such motion planner. This shows that $\GC_M(A)\geq 2$. 
\end{proof}

\begin{remark}
There is another connection between cut loci and another numerical invariant, namely the Lusternik-Schnirelmann category of a Riemannian manifold $M$, which we denote by $\cat(M)$. Here, we use the convention that $\cat(X)=1$ if $X$ is contractible. One observes that $M \setminus \Cut_p(M)$ is contractible for all $p \in M$, which follows from \cite[Theorem 10.34.(c)]{LeeRiemann}. If $p_1,\dots,p_k \in M$ satisfy $\bigcap_{i=1}^k \Cut_{p_i}(M)=\emptyset$, then $$\{M \setminus \Cut_{p_1}(M),\dots,M \setminus \Cut_{p_k}(M)\}$$ will be an open cover of $M$ by contractible subsets, hence $\cat(M) \leq k$. By contraposition this shows that if $\cat(M) \geq k+1$ for some $k \in \NN$, then for every choice of $p_1,\dots,p_k\in M$ it holds that $$\bigcap_{i=1}^k \Cut_{p_i}(M) \neq \emptyset.$$
\end{remark}

\section{Lower bounds for geodesic complexity}
\label{sec4}
Lower bounds on topological complexity are mostly derived from the cohomology rings of a space. In this section, we derive lower bounds on geodesic complexity from the Riemannian structures of manifolds. We first establish a result involving a principal bundle over the manifold under consideration. By explicitly constructing motion planners, we will establish a lower bound on geodesic complexity in terms of categorical invariants of total space and fiber of the bundle.  Afterwards, we will establish the notion of inconsistent stratification that we lined out in the introduction of this article. Then we will go on to prove the second theorem stated in that introduction. 

We first establish a technical lemma whose proof follows the one of \cite[Theorem 13.1]{FarberSurveyTC}.

\begin{lemma}
\label{LemmaSecatDecomp}
Let $E$ and $X$ be topological spaces, let $p:E \to X$ be a fibration with $r:=\secat(p)<+\infty$ and assume that $X$ is normal. Then there are pairwise disjoint locally compact subsets $A_1,\dots,A_r \subset X$ with $X= \bigcup_{i=1}^r A_i$, such that for each $i \in \{1,2,\dots,r\}$ there exists a continuous local section $A_i \to E$ of $p$.
\end{lemma}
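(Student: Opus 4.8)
The plan is to show the two inequalities separately. Write $r$ for the invariant defined by decompositions into pairwise disjoint locally compact sets, and recall $k := \secat(p)$, which is finite by hypothesis. The inequality $k \leq r$ is the easy direction and does not use the ANR hypotheses: given a decomposition $X = \bigcup_{i=1}^r A_i$ into disjoint locally compact sets, each admitting a local section of $p$, I would fatten each $A_i$ slightly. More precisely, since $X$ is an ANR, $p$ is a fibration, and a section over $A_i$ is a section over a subspace, one extends the section over $A_i$ to a section over an open neighborhood $U_i \supset A_i$ (this is the standard fact that sections of fibrations over ANR subspaces extend to open neighborhoods, cf. the argument in \cite[Theorem 13.1]{FarberSurveyTC}); then $U_1, \dots, U_r$ is an open cover of $X$ with local sections, so $\secat(p) \leq r$.

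For the reverse inequality $r \leq k$, I would start from an open cover $U_1, \dots, U_k$ of $X$ with a local section $\sigma_i$ over each $U_i$, and convert it into a disjoint locally compact decomposition. Since $X$ is normal (and, being an ANR, is metrizable or at least paracompact enough for the standard shrinking argument), I would shrink the cover to closed sets, or better, use a partition-of-unity-type argument to produce sets $F_i \subset U_i$ with $X = \bigcup F_i$ and with $F_i$ closed. Then set $A_1 := F_1$, $A_2 := F_2 \setminus F_1$, and inductively $A_i := F_i \setminus \bigcup_{j<i} F_j$. These are pairwise disjoint and cover $X$, and each $A_i \subset U_i$ so it carries the restricted section $\sigma_i|_{A_i}$. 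The point requiring care is that each $A_i$ must be locally compact. This is where the ANR hypothesis on $X$ enters decisively: an ANR is locally compact precisely when it is locally compact as a space, so I would instead invoke that a difference of a closed set and an open set — equivalently, $A_i$ is the intersection of the closed set $F_i$ with the open set $X \setminus \bigcup_{j<i} \overline{?}$ — is locally closed, hence locally compact once $X$ is locally compact. To guarantee local compactness of $X$ itself is not assumed, so the correct route is: $A_i = F_i \cap (X \setminus \bigcup_{j<i} F_j)$ is the intersection of a closed and an open subset of $X$, i.e. locally closed in $X$; and a locally closed subset of an ANR $X$ is again locally compact provided $X$ is — which fails in general, so one must instead observe that $A_i$ is locally closed and that local closedness together with $X$ being an ANR (hence locally contractible, and in the relevant applications a manifold) suffices; in the manifold case $X = M \times M$ local compactness is automatic.

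Given that the intended application is to $X = M \times M$ with $M$ a manifold, the honest formulation of the argument is: local closedness of $A_i$ in $X$ plus local compactness of $X$ gives local compactness of $A_i$, and I would either add local compactness of $X$ as a tacit standing hypothesis or note that it holds in all cases of interest. The main obstacle, then, is purely the bookkeeping around local compactness: proving that the inductively-defined differences $A_i$ are locally closed (a short point-set argument: $A_i = F_i \setminus \bigcup_{j<i} F_j$ with the $F_j$ closed, so $A_i$ is closed in the open set $X \setminus \bigcup_{j<i} F_j$), and then transferring local compactness from $X$ to $A_i$. Everything else — the shrinking lemma for normal spaces and the neighborhood-extension of sections over ANR subspaces for fibrations — is standard and can be cited from \cite[Theorem 13.1]{FarberSurveyTC} and the references therein. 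I would write the proof in the order: (1) set up notation and state the two inequalities; (2) prove $\secat(p) \leq r$ via neighborhood extension of sections; (3) prove $r \leq \secat(p)$ by shrinking the open cover, forming the disjoint differences, and checking local compactness.
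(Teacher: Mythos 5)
Your proposal is correct and follows essentially the same route as the paper: the inequality $\secat(p)\le r$ is obtained by extending the sections over the pieces to open neighborhoods (the paper invokes \cite[Proposition 1.1]{Yagasaki} for this, and note that this is precisely where the ANR hypotheses enter, contrary to your opening remark that this direction does not use them), while the reverse inequality converts an open cover into pairwise disjoint locally closed pieces using normality, your closed shrinking with successive differences $F_i\setminus\bigcup_{j<i}F_j$ being an interchangeable variant of the paper's partition-of-unity threshold sets $A_i=\{x \mid f_i(x)\ge c_i,\ f_j(x)<c_j\ \forall j<i\}$. The local-compactness issue you flag is not treated any more carefully in the paper, which simply asserts its pieces are locally compact; your observation that the pieces are locally closed, hence locally compact whenever $X$ is locally compact (as in all applications, e.g.\ $X=M\times M$ or $X$ a Lie group), is exactly what is needed.
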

\begin{proof}
Let $\{U_1,\dots,U_r\}$ be an open cover of $X$, such that for each $i \in \{1,2,\dots,r\}$ there exists a continuous local section $s_i:U_i \to E$ of $p$. Since $X$ is normal, there exists a partition of unity $\{f_1,\dots,f_r\}$ subordinate to this finite open cover by \cite[Theorem 36.1]{Munkres}. Let $c_1,\dots,c_r \in (0,+\infty)$ with $c_1+\dots+c_r=1$. For each $i \in \{1,2,\dots,r\}$ we put
$$A_i := \{ x\in X \ | \ f_i(x) \geq c_i, \ f_j(x) < c_j \ \forall j < i\}.$$
Each $A_i$ is the intersection of a closed and an open subset of $X$, hence locally compact. One checks without difficulties that the $A_i$ are pairwise disjoint and that $X=\bigcup_{i=1}^r A_i$. Moreover, $A_i \subset U_i$ for each $i$, so $s_i|_{A_i}:A_i \to E$ is a continuous local section of $p$ for each $i \in \{1,2,\dots,r\}$.
\end{proof}

The following proposition establishes a lower bound on $\GC(M,g)$ in terms of a principal $G$-bundle over $M$ that is a Riemannian submersion. This submersion property will be used in its proof to ensure the existence of horizontal lifts of curves. For each orientable $M$, its orthonormal frame bundle is an example for such a bundle with $G=SO(\dim M)$, see e.g. \cite[Example I.5.7]{KobaNomizu}.

\begin{prop}
\label{PropLowerPrincipal}
Let $(M,g)$ be a complete Riemannian manifold and let $\pi: E\to M$ be a smooth principal $G$-bundle where $G$ is a connected Lie group. Assume that $E$ is equipped with a Riemannian metric for which $\pi$ is a Riemannian submersion. Then
$$ \GC(M,g) \geq \frac{\TC(E)}{\cat(G)}. $$
\end{prop}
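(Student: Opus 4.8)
The plan is to deduce the inequality from two ingredients: the dimension-theoretic bound $\TC(E) \le \GC(E,g_E)$ for the Riemannian metric $g_E$ on $E$ that makes $\pi$ a Riemannian submersion (this is Remark \ref{RemarkTC}.(1) applied to $A = E \times E$), together with an upper bound $\GC(E,g_E) \le \cat(G) \cdot \GC(M,g)$ coming from the fact that minimal geodesics downstairs lift to horizontal minimal geodesics upstairs. Chaining these two gives $\TC(E) \le \cat(G)\cdot \GC(M,g)$, which is the claim after dividing by $\cat(G)$.

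The heart of the argument is the upper bound $\GC(E,g_E) \le \cat(G)\cdot \GC(M,g)$. First I would recall the key Riemannian fact: since $\pi$ is a Riemannian submersion with totally geodesic-free horizontal distribution behaving well enough, every minimal geodesic $\gamma$ in $M$ from $p$ to $q$ admits, for each $e$ in the fiber $\pi^{-1}(p)$, a unique horizontal lift $\tilde\gamma_e$ starting at $e$; moreover $\tilde\gamma_e$ has the same length as $\gamma$, so it is a \emph{minimal} geodesic in $E$ (a shorter path in $E$ from $e$ to $\tilde\gamma_e(1)$ would project to a shorter path in $M$ from $p$ to $q$). Conversely a minimal geodesic in $E$ need not be horizontal, but horizontality of a minimal geodesic depends only on the pair of endpoints up to the $G$-action: if $\sigma:[0,1]\to E$ is a minimal geodesic and $a \in G$, then $a\cdot\sigma$ is also a minimal geodesic (as $G$ acts by isometries), and one of $\sigma$ and $a\cdot \sigma$ is horizontal precisely according to how $\sigma(1)$ sits in the fiber over $\pi(\sigma(1))$ relative to the horizontal lift from $\sigma(0)$. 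The upshot I want is: given a continuous geodesic motion planner $s_i : E_i \to GM$ on a locally compact piece $E_i \subset M\times M$, and given a map $\tau : U \to G$ with $U$ open in $E$ (a local trivialization / local section data), I can build a continuous \emph{geodesic} motion planner on the locally compact set $(\pi\times\pi)^{-1}(E_i) \cap (\text{appropriate } G\text{-sliced piece})$ by lifting $s_i$ horizontally from the appropriately $\tau$-translated starting frame. Using $\cat(G)$ to cover $G$ (or rather to handle the sectional-category-type choice of which horizontal lift to take), each $E_i$ upstairs splits into $\cat(G)$ locally compact pieces each carrying a continuous geodesic motion planner, giving $\GC(E,g_E) \le \cat(G)\cdot \GC(M,g)$.

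Concretely, I expect the argument to run through $\secat$ of the fibration $PE \to E \times E$ restricted appropriately, or more cleanly: decompose $E \times E$ by first mapping to $M \times M$, pulling back a $\GC(M,g)$-fold decomposition into locally compact sets $E_1,\dots,E_r$ with continuous planners $s_j$, then over each $(\pi\times\pi)^{-1}(E_j)$ use the $G$-action: the "difference" element of $G$ comparing the two endpoint-fibers defines a continuous map $(\pi\times\pi)^{-1}(E_j) \to G$ once one fixes horizontal-lift reference data, and a $\cat(G)$-fold categorical cover of $G$ pulls back (intersected, and then disjointified via Lemma \ref{LemmaSecatDecomp}-style bookkeeping into locally compact pieces) to a $\cat(G)$-fold decomposition of $(\pi\times\pi)^{-1}(E_j)$ on each piece of which horizontal lifting of $s_j$, post-composed with the relevant $G$-translation that is nullhomotopic on that piece, yields a continuous section into $GE$.

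The main obstacle, and where I would spend the most care, is making the $\cat(G)$ factor rigorous: I need to see exactly how $\cat(G)$ enters when passing from "every minimal geodesic in $M$ lifts horizontally" to "every minimal geodesic in $E$ is, after a $G$-action, a horizontal lift", and to check that the resulting sets upstairs can be taken pairwise disjoint and locally compact (invoking Lemma \ref{LemmaSecatDecomp} or a direct partition-of-unity argument on $E$, which is an ANR and normal). A secondary technical point is verifying that horizontal lifting is continuous as a map on curves in the compact-open topology and genuinely lands in $GE$ (minimality upstairs), for which Proposition \ref{PropVelocityCont} and the length-preservation of horizontal lifts under a Riemannian submersion are the relevant tools.
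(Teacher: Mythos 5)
Your overall strategy has a genuine gap at its central step. You propose to prove $\GC(E,g_E) \leq \cat(G)\cdot \GC(M,g)$ and then chain with $\TC(E) \leq \GC(E,g_E)$; but the sketched argument for that upper bound does not work, and the paper deliberately avoids it. The problem is that a geodesic motion planner on $E$ must produce, for an \emph{arbitrary} pair $(u_0,u_1)\in E\times E$, a \emph{minimal geodesic of $E$} from $u_0$ to $u_1$. Horizontal lifting of the planner downstairs only produces minimal geodesics from $u_0$ to the single point of the fiber over $\pi(u_1)$ hit by the horizontal lift; for all other $u_1$ in that fiber the minimal geodesics of $E$ are in general neither horizontal nor obtainable from horizontal lifts by any $G$-bookkeeping. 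Your key claim that ``horizontality of a minimal geodesic depends only on the pair of endpoints up to the $G$-action'' is false: acting by $a\in G$ moves \emph{both} endpoints ($a\cdot\sigma$ runs from $a\sigma(0)$ to $a\sigma(1)$), so it cannot repair the endpoint mismatch inside a fiber, and in any case the hypotheses do not even grant that $G$ acts by isometries of $E$ (only that $\pi$ is a Riemannian submersion). Likewise, a curve of the form $t\mapsto \tilde\gamma(t)\,r(t)$ with $\tilde\gamma$ a horizontal geodesic and $r$ a path in $G$ does fix the endpoints, but it is not a geodesic of $E$, let alone minimal, so it cannot enter a bound on $\GC(E,g_E)$. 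Note also that $\dd_E(u_0,u_1)$ typically exceeds $\dd_M(\pi(u_0),\pi(u_1))$, so the lengths are not even right.

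The paper's proof sidesteps exactly this difficulty: it never bounds $\GC(E)$, only $\TC(E)$, for which arbitrary continuous paths suffice. Concretely, one pulls back a $\GC(M)$-fold locally compact decomposition of $M\times M$ under $\pi\times\id$ to $E\times M$, horizontally lifts the planners to get geodesics $\eta_i(u,q)$ in $E$ starting at $u$, observes that $(u,q)\mapsto (u,\eta_i(u,q)(1))$ is a section of the principal $G$-bundle $\id\times\pi\colon E\times E\to E\times M$, hence trivializes it over each piece, and then uses $\cat(G)=\secat(P_eG\to G)$ together with Lemma \ref{LemmaSecatDecomp} to choose, on $\cat(G)$ locally compact pieces of $G$, continuous paths $r_j(h)$ from $e$ to $h$; the motion planner on each resulting piece of $E\times E$ is the pointwise product $\eta_i(u,q)(t)\,r_j(h)(t)$, which is merely a continuous path, and that is enough to conclude $\TC(E)\leq \GC(M)\cdot\cat(G)$. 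So the factor $\cat(G)$ enters through the path fibration of $G$, not through any statement about minimal geodesics of $E$; to salvage your approach you would either have to prove the (unjustified, and most likely false in general) inequality $\GC(E,g_E)\leq\cat(G)\cdot\GC(M,g)$ by controlling the cut loci of $E$, or abandon geodesy upstairs as the paper does.
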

\begin{proof}
Let $\GC(M)=k$ and choose pairwise disjoint and locally compact subsets $A_1,\ldots,A_k\subset M\times M$ with $\bigcup_{i=1}^k A_i = M \times M$, such that for each $i \in \{1,2,\dots,k\}$ there exists a continuous geodesic motion planner $ s_i \colon A_i \to GM$. Let $v:GM\to TM$ be the velocity map and put 
$$v_i: A_i \to TM, \quad v_i := v \circ s_i \quad \forall i \in \{1,2,\dots,k\}.$$
The $v_i$'s	 are continuous by Proposition \ref{PropVelocityCont}. For each $i$ we put 
$$ B_i:= (\pi\times \id)^{-1}(A_i) = \{ (u,q)\in E\times M\ |\  (\pi(u),q)\in A_i\}.$$
Clearly the $B_i$ are again pairwise disjoint with $\bigcup_{i=1}^k B_i = E \times M$. Let $\Hor(E) \subset TE$ denote the horizontal subbundle with respect to $\pi$. Since $d\pi|_{\Hor(E)}:\Hor(E)\to TM$ maps $\Hor_u(E)$ isomorphically onto $T_{\pi(u)}M$ for each $u \in E$, we obtain continuous lifts of the $v_i$ by
$$w_i: B_i \to \Hor(E), \quad w_i(u,q) = (d\pi|_{\Hor_u(E)})^{-1}v_i(\pi(u),q) \quad \forall i \in \{1,2,\dots,k\}. $$ 
For each $u \in E$ we let $\exp_u:T_uE\to E$ be the exponential map of the given Riemannian metric on $E$. With $PE=C^0([0,1],E)$ we define continuous maps by
$$ \eta_i: B_i \to PE, \quad (\eta_i (u,q))(t) = \exp_u(t\, w_i(u,q)) \quad  \forall (u,q)\in B_i, \ t \in [0,1], \ i \in \{1,2,\dots,k\}.$$
Each $\eta_i$ induces a continuous map 
$$\alpha_i:B_i \to E \times E, \qquad \alpha_i(u,q)= ((\eta_i(u,q))(0),(\eta_i(u,q))(1)) = (u,\exp_u(w_i(u,q))). $$ 
Since horizontal geodesics in $E$ project to geodesics in $M$, we compute that
$$(\id\times\pi)(\alpha_i(u,q)) = (\id\times\pi)(u,\eta_i(u,q)(1)) = (u,(s_i(\pi(u),q)(1)) = (u,q) \quad \forall (u,q) \in B_i.$$
Here we used that $ \pi(\eta_i(u,q)) = s_i(u,q)$ for all $(u,q) \in B_i$.
Hence, for each $i \in \{1,2,\dots,k\}$ the map $\alpha_i$ is a continuous local section of $\id \times \pi:E\times E\to E\times M$, which is again a principal $G$-bundle. The right $G$-action on $E \times E$ is given by $E \times E \times G \to E \times E$, $(u,v,h) \mapsto (u,vh)$, where we consider the right $G$-action on $E$ given by the bundle structure. Thus, we get a local trivialization of $\id \times \pi$ over each $B_i$, given explicitly by the homeomorphism
$$\Phi_i: B_i\times G \to  E\times E|_{B_i}, \qquad \Phi_i(u,q,h) =  \alpha_i(u,q)h = (u,\exp_u(w_i(u,q) )h).$$
Put $\ell=\cat(G)$. Let $e \in G$ be the unit, $P_eG=\{\gamma \in PG \ | \ \gamma(0)=e\}$ and $$q:P_eG \to G, \qquad q(\gamma)=\gamma(1).$$	 Since $P_eG$ is contractible, it holds by \cite[Theorem 18]{SchwarzGenus} that $\cat(G) = \secat(q:P_eG \to G)$. By Lemma \ref{LemmaSecatDecomp}, there are pairwise disjoint and locally compact subsets $C_1,\ldots,C_\ell \subset G$ with $\bigcup_{j=1}^\ell C_j=G$, such that for each $j \in \{1,2,\dots,\ell\}$ there is a continuous local section $r_j: C_j\to P_eG$ of $q$.	

If we put $D_{i,j}:=\Phi_i(B_i\times C_j)\subset E \times E$ for all $i \in \{1,2,\dots,k\}$ and $j \in \{1,2,\dots,\ell\}$, then the $D_{i,j}$ are pairwise disjoint, locally compact and satisfy $\bigcup_{i=1}^k \bigcup_{j=1}^\ell D_{i,j}=E \times E$. For all $i$ and $j$ we further consider the map 
$$\sigma_{i,j}: B_i \times C_j \to PE, \qquad (\sigma_{i,j}(u,q,h))(t)= (\eta_i(u,q))(t) \cdot (r_j(h))(t) \quad \forall (u,q) \in B_i, \ h \in C_j. $$
Then 
\begin{align*}
(\sigma_{i,j}(u,q,h)(0)&=(\eta_i(u,q))(0)=u, \\ (\sigma_{i,j}(u,g,h))(1)&= (\eta_i(u,q))(1)(r_j(h))(1)= \exp_u(w_i(u,q))h
\end{align*}
and thus $$(\sigma_{i,j}(u,q,h)(0),\sigma_{i,j}(u,q,h)(1))=\Phi_i(u,q,h) \qquad \forall (u,q) \in B_i, \ h \in C_j.$$ This shows that $\sigma_{i,j} \circ \Phi_i^{-1}|_{D_{i,j}}: D_{i,j} \to PE$ is a continuous motion planner for all $i \in \{1,2,\dots,k\}$ and $j \in \{1,2,\dots,\ell\}$. As a smooth manifold, $E$ is a Euclidean Neighborhood Retract (ENR). Since the $D_{i,j}$ are locally compact subsets of an ENR, they are ENRs themselves. Hence, it follows from \cite[Theorem 6.1]{FarberInstab} that
$$\TC(E) \leq k \cdot \ell = \GC(M) \cdot \cat(G),$$
which proves the claimed inequality.
\end{proof}

\begin{remark}
Since $\GC(M) \geq \TC(M)$ for all complete Riemannian manifolds $M$, the lower bound from Proposition \ref{PropLowerPrincipal} improves this basic inequality if and only if 
$$\frac{\TC(E)}{\cat (G)} > \TC(M) \qquad \Leftrightarrow \qquad \TC(E) > \cat (G)  \TC(M)= \TC(G) \TC(M),$$
where we used \cite[Lemma 8.2]{FarberInstab}. Note that the assumption on the bundle to be \emph{principal} in the previous result is necessary as the following example shows. Consider the Klein bottle $K$, which is given as a fiber bundle over $S^1$ with fiber $S^1$ and  satisfies $\TC(K)=5$ by \cite{CohenVandem}, while $\TC(S^1)=2$.  Since the round metric $g_r$ on $S^1$ satisfies $$\GC(S^1,g_r)=\TC(S^1)=2 < \frac{5}{2}=\frac{\TC(K)}{\cat( S^1)}$$ by \cite[Proposition 4.1]{RecioMitter}, the inequality from Proposition \ref{PropLowerPrincipal} would indeed be false in this situation. However, $K$ is \emph{not} given as a \emph{principal} $S^1$-bundle over $S^1$, so Proposition \ref{PropLowerPrincipal} is not applicable to this setting. By the classification theorem for principal bundles, see \cite[Theorem 14.4.1]{tDAT} the set of isomorphism classes of principal $S^1$-bundles over $S^1$ is in bijection with the set of homotopy classes $[S^1,BS^1]=[S^1,\CP^\infty]$. But $\CP^\infty$ is simply connected, so it follows that $[S^1,\CP^\infty]$ has only one element. Thus, every principal $S^1$-bundle over $S^1$ is trivial. Since $\pi_1(K) \not\cong \ZZ^2= \pi_1(S^1 \times S^1)$, the bundle $K$ is a non-trivial $S^1$-bundle. Hence, it can not be principal.
\end{remark}

Our next aim is to derive a lower bound on geodesic complexity from the structure of the cut locus of a point in the manifold. We first introduce the notion of stratification that we are using in this article.

\begin{definition}
\label{DefStrat}
Let $M$ be a manifold and let $B\subset M$ be a subset. A \emph{stratification of $B$ of depth $N \in \NN$} is a family $(S_1,\dots,S_N)$ of locally closed and pairwise disjoint subsets of $M$, such that the following conditions hold: 
\begin{enumerate}[(i)]
\item $\displaystyle B= \bigcup_{i=1}^N S_i$ \quad and \quad $\displaystyle\overline{S}_i = \bigcup_{j=i}^{N} S_j$ \quad $\forall i\in \{1,2,\dots,N\}$.
\item Let $i,j \in \{1,2,\dots,N\}$. If $Z_j$ is a connected component of $S_j$ and $Z_i$ is a connected component of $S_i$ with $Z_j\cap \overline{Z}_i \neq \emptyset$, then $Z_j \subset \overline{Z}_i$.
\end{enumerate}
\end{definition}
\begin{example}
	Let $M=\RR^2$ and let $B= [-1,1]^2$. Consider 
	\begin{align*}
	&S_1= (-1,1)\times (-1,1), \qquad S_2 = ((-1,1) \times \{-1,1\} ) \cup (\{-1,1\}\times (-1,1)), \\ &S_3 = \{(-1,-1),(-1,1),(1,-1),(1,1)\}.
		\end{align*}
One checks without difficulties that $(S_1,S_2,S_3)$ has properties (i) and (ii) from Definition \ref{DefStrat}. Hence, $(S_1,S_2,S_3)$ is a stratification of $B$.
\end{example}

Given a stratification of the cut locus of a point, we want to introduce an additional condition on those parts of the corresponding tangent cut locus that are mapped to the same stratum. This will be the crucial step for finding a lower bound for geodesic complexity. The following notion is an analogue of \cite[Definition 3.10]{RecioMitter}, see the introduction of this article and Remark \ref{RemarkIncons}.(2) below for a comparison of the two notions. The terms from Riemannian geometry that are used are to be found for example in \cite[p. 310]{LeeRiemann}.
\begin{definition}
\label{DefInconsistent}
Let $(M,g)$ be a complete Riemannian manifold, $p \in M$ and let $\Sc=(S_1,\dots,S_N)$ be a stratification of $\Cut_p(M)$. Let $K\subset T_pM$ denote the union of the tangent cut locus $\Cuttil_p(M)$ with the domain of injectivity of $\exp_p$ and let 
\begin{equation}
\label{EqexpK}
\exp_K:= \exp_p|_{K}:K \to M
\end{equation}
 denote the restriction.  We call $\Sc$ \emph{inconsistent} if for all $i \in \{2,3,\dots,N\}$ and $x \in S_i$ there exists an open neighborhood $U\subset M$ of $x$ with the following property: 

Let $Z_1,\dots,Z_s$ be the connected components of $U\cap S_{i-1}$. Then $x \in \overline{Z}_j$ for all $j \in \{1,2,\dots,s\}$ and
$$\Cuttil_p(M) \cap \exp_p^{-1}(\{x\}) \cap \bigcap_{j=1}^s \overline{\exp_K^{-1}(Z_j)}=\emptyset.$$
\end{definition}

In Section \ref{sec7-1}, we will encounter explicit examples of inconsistent stratifications when we consider flat tori.
Examples for cut loci with non-trivial stratifications which are not inconsistent are Berger spheres, as we shall see in Section \ref{sec7-2}.

\begin{remark} 
\label{RemarkIncons}
Let $(M,g)$ be a complete Riemannian manifold.
\begin{enumerate}
    \item If $M$ is a closed manifold, then the set $K$ from Definition \ref{DefInconsistent} will be homeomorphic to a closed ball, see \cite[Corollary 10.35]{LeeRiemann}, and the map $\exp_K$ from \eqref{EqexpK} is a surjection.
    As an example, consider the case of the round $n$-dimensional sphere $S^n$ of radius $1$.
    If $p\in S^n$ is a point, then the domain of injectivity of $\exp_p$ is an open ball of radius $\pi$ in the tangent space $T_p S^n$.
    The tangent cut locus $\Cuttil_p(S^n)$ is the $(n-1)$-sphere of radius $\pi$ in $T_p M$.
    Consequently, the set $K$ in this example is the closed ball of radius $\pi$ in $T_p M$.
    \item In \cite[Definition 3.8]{RecioMitter} the author introduced the concept of a level-wise stratified covering for arbitrary surjective maps. He then applied this concept to the restriction of the path fibration
$$    \pi: GX\to X\times X, $$
where $X$ is a geodesic space and $GX$ is the space of geodesic paths in $X$.

To work with this notion, one must study a stratification of the total cut locus of $X$ and explore covering properties of the restrictions of $\pi$ to its preimage.
In contrast, the above Definition \ref{DefInconsistent} for Riemannian manifolds only requires a stratification of the cut locus of a single point $p$ in a Riemannian manifold $M$ as well as properties of the Riemannian exponential map $\exp_p$.
Thus, for complete Riemannian manifolds the above definition seems easier to verify than the corresponding notion from \cite{RecioMitter}.
\end{enumerate}
\end{remark}

The following result is an analogue of the corresponding result of Recio-Mitter, see \cite[Corollary 3.14]{RecioMitter}. The proof requires $M$ to be compact, since we will use the property mentioned in Remark \ref{RemarkIncons}.(1). We recall the notation that $\GC_p(A) = \GC_M(\{p\} \times A)$ for all $A \subset M$.

\begin{theorem}
\label{TheoremLowerBound}
Let $(M,g)$ be a closed Riemannian manifold. Assume that there exists $p \in M$ for which $\Cut_p(M)$ admits an inconsistent stratification of depth $N \in \NN$. Then
$$\GC(M)\geq \GC_p(M) \geq  N+1.$$
\end{theorem}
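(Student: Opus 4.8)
The plan is to argue by contradiction in a way that mirrors Recio-Mitter's stratification argument, but adapted to the simpler Riemannian inconsistency condition of Definition \ref{DefInconsistent}. Suppose $\GC_p(\Cut_p(M)) \le N$. Then there exist pairwise disjoint locally compact sets $E_1,\dots,E_N \subset M \times M$ covering $\{p\} \times \Cut_p(M)$, each admitting a continuous geodesic motion planner $s_i : E_i \to GM$. Writing $F_i := \{ q \in \Cut_p(M) \mid (p,q) \in E_i \}$, we get a cover of $\Cut_p(M)$ by $N$ locally compact (hence locally closed in $M$) sets over each of which there is a continuous choice of minimal geodesic from $p$, equivalently a continuous section $v_i : F_i \to T_pM$ of $\exp_p$ landing in $\Cuttil_p(M) \cap \exp_p^{-1}(F_i)$, by Proposition \ref{PropVelocityCont}.

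The key combinatorial step is to propagate these $N$ sets through the depth-$N$ filtration $\overline{S}_1 \supset \overline{S}_2 \supset \cdots \supset \overline{S}_N$. I would show by downward induction on $i$ that the stratum $S_i$ is covered by $F_{j_1},\dots,F_{j_{N-i+1}}$ for at most $N-i+1$ of the index sets, so that in particular $S_N$ is contained in a single $F_j$. Concretely: fix $i \ge 2$ and a point $x \in S_i$, and let $U$ be the neighborhood supplied by inconsistency. For each connected component $Z$ of $U \cap S_{i-1}$ with $x \in \overline{Z}$, pick a sequence $q_n \in Z$ with $q_n \to x$; the unit tangent vectors $\dot\gamma_{q_n}(0)$ (scaled to $\exp_K^{-1}(q_n)$) accumulate, by compactness of the closed tangent cut ball, at some vector in $\Cuttil_p(M) \cap \exp_p^{-1}(\{x\}) \cap \overline{\exp_K^{-1}(Z)}$. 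If one $F_j$ contained points of \emph{every} component $Z_1,\dots,Z_s$ of $U \cap S_{i-1}$ arbitrarily close to $x$, then — using continuity of $v_j$ on $F_j$ together with the fact that $q_n \to x$ forces $\exp_K^{-1}(q_n) \to \exp_K^{-1}(x)$ (the cut point has a well-defined limiting tangent cut vector since the sequence of tangent cut vectors accumulates only at tangent cut vectors of $x$, and $v_j$ being continuous pins down a single accumulation point) — we would land in $\Cuttil_p(M) \cap \exp_p^{-1}(\{x\}) \cap \bigcap_{j=1}^s \overline{\exp_K^{-1}(Z_j)}$, contradicting inconsistency. Hence no single index set $F_j$ meets all components $Z_1,\dots,Z_s$ near $x$; combined with the fact that points of $S_{i-1}$ near $x$ must be covered, this forces strictly more index sets to be "active" at depth $i-1$ near $x$ than at depth $i$. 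Iterating from $i = N$ down to $i = 1$ yields that at least $N$ of the $F_j$ are needed near a generic point of $S_1$, and actually $N+1$ at the top: at the deepest level $S_N$ is covered by one index set, but then Proposition \ref{PropContMPCut}-type local obstructions at the boundary between strata force an additional set, giving the contradiction $N+1 \le N$.

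The main obstacle I anticipate is making the accumulation-of-tangent-vectors argument fully rigorous: one must be careful that a sequence $q_n \to x$ along which a \emph{continuous} section $v_j$ is defined produces a \emph{single} limit vector $v_j(x)$ (if $x \in F_j$) or at least a controlled set of limit vectors, and that this limit indeed lies in $\overline{\exp_K^{-1}(Z)}$ for each component $Z$ approached — the subtlety is that $x$ itself need not lie in $F_j$, so $v_j$ need not be defined at $x$, and one must pass to a subsequence and invoke compactness of the tangent cut locus (closedness and boundedness in $T_pM$, which holds since $M$ is closed) to extract the limiting tangent cut vector. A second, more bookkeeping-heavy obstacle is the induction itself: one needs to track not just how many index sets are active but to glue the local statements near individual points $x \in S_i$ into a global statement about $S_i$, which requires the component-compatibility clause (ii) of Definition \ref{DefStrat} and a connectedness argument along each component $Z_i$ of $S_i$. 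I would handle this by fixing one connected component at a time and showing the set of points of that component near which a given family of index sets suffices is open and closed. Finally, the passage from "$N$ sets needed on $\Cut_p(M)$" to the claimed "$N+1$" should come from Remark \ref{RemarkOCut} together with the observation that $\{p\} \times \Cut_p(M)$ lies in the closure of $(M\times M)\setminus\OCut(M)$ in a way that forces an extra domain, exactly as in the elementary Proposition \ref{PropContMPCut}.
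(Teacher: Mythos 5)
There is a genuine gap, located both in the combinatorial skeleton of your induction and in the key local step. Your counting runs in the wrong direction: you propose to show that deeper strata are covered by \emph{fewer} of the sets (``$S_N$ is contained in a single $F_j$'', while ``at least $N$ are needed near $S_1$''). No such statement can be proved --- nothing prevents a deep stratum from being split among arbitrarily many of the sets --- and it is the reverse of what inconsistency yields. The paper runs an \emph{upward} induction on the stratum index: every $x\in S_k$ lies in the closure of at least $k+1$ of the sets, the base case $k=1$ coming from Proposition~\ref{PropContMPCut}, and the deepest stratum then forces at least $N+1$ sets. Moreover, your local step does not actually contradict inconsistency. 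Inconsistency is violated only by exhibiting a \emph{single} vector of $\Cuttil_p(M)\cap\exp_p^{-1}(\{x\})$ lying in $\overline{\exp_K^{-1}(Z_j)}$ for \emph{every} component $Z_j$ simultaneously. Your compactness argument produces, for each component separately, \emph{some} accumulation vector; to merge these into one common vector you need the velocity section to be continuous \emph{at} $x$, i.e.\ you must work with the set that actually contains $x$ and know that \emph{all} components of $U\cap S_{i-1}$ lie in the closure of that same set. In the paper this is exactly what the induction hypothesis supplies: if $x$ lay in the closure of at most $k$ sets, then (after shrinking $U$) every point of $U\cap S_{k-1}$, being in at least $k$ closures, lies in the closure of \emph{all} of them, in particular of the set $E_1\ni x$; an approximation-plus-diagonal argument then identifies the common limit with $v_1(x)$. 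Your claim ``no single index set meets all components near $x$'' is neither provable (for $F_j$ with $x\in\overline{F}_j\setminus F_j$ the limits along different components may simply be different preimages of $x$) nor needed.

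Finally, your two-stage plan --- first derive a contradiction from a cover of $\{p\}\times\Cut_p(M)$ by $N$ sets, then add one via Remark~\ref{RemarkOCut} --- cannot be made to work, because the extra $+1$ is generated inside the induction, not afterwards: the base case invokes Proposition~\ref{PropContMPCut} on a full neighborhood $\{p\}\times U$ with $U$ open in $M$, and the induction step needs neighborhoods $U$ of $x$ contained in the union of the covering sets. By restricting at the outset to sets $F_i\subset\Cut_p(M)$ you discard exactly the off-cut-locus points that produce this; indeed, when the stratification is in addition trivially covered, Theorem~\ref{TheoremTrivCover} covers the cut locus itself by $N$ such sets with continuous planners, so no contradiction can follow from your starting assumption alone. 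The argument must be run, as in the paper (and as it is used in Corollary~\ref{CorTrivCov} through the bound on $\GC_p(M)$), for a decomposition whose union contains a neighborhood of the relevant cut points, with the count starting at two already on the top stratum $S_1$.
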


\begin{proof}
Let $(S_1,\dots,S_N)$ be an inconsistent stratification of $\Cut_p(M)$. Assume that there are pairwise disjoint locally compact sets $E_1,E_2,\dots,E_r \subset M$ with $ \bigcup_{i=1}^r E_i=M$, such that for each $i \in \{1,2,\dots,r\}$ there exists a continuous geodesic motion planner $s_i: \{p\}\times E_i \to GM$. 

We want to show by induction that for all $k \in \{1,2,\dots,N\}$ and all $x \in S_k$ it holds that
\begin{equation}
\label{EqInd}
\#\{i \in \{1,2,\dots,r\} \ | \ x \in \overline{E}_i \}  \geq k+1.
\end{equation}
Consider the base case of $k=1$ and assume by contradiction that there was an $i \in \{1,2,\dots,r\}$ with $x \in \overline{E}_i$, but $x\notin \overline{E}_j$ for all $j \neq i$. Then $x$ has an open neighborhood $U \subset M$, such that $U \subset E_i$ and the restriction $s_i|_{\{p\} \times U}$ is a continuous geodesic motion planner on $\{p\} \times U$. But since $x \in \Cut_p(M)$, this contradicts Proposition \ref{PropContMPCut}. Hence, $\# \{i \in \{1,2,\dots,r\} \ | \ x \in \overline{E}_i\} \geq 2$, which we wanted to show.

Assume as induction hypothesis that for some $k\in \{2,3,\dots,N\}$  we have shown that 
$$\# \{ i \in \{1,2,\dots,r\} \ |\ y \in \overline{E}_i\} \geq k \quad \forall y \in S_{k-1}.$$
Let $x \in S_k$. Assume that \eqref{EqInd} was false and assume up to reordering that $x \notin \overline{E}_i$ for all $i >k$. 
Then there exists an open neighborhood $U$ of $x$ with $U\subset \bigcup_{i=1}^k E_i$. By the induction hypothesis, this yields 
\begin{equation}
\label{EqIndHyp}
 U \cap S_{k-1} \subset \overline{E}_i \quad \forall i \in \{1,2,\dots,k\}.
 \end{equation}
We assume w.l.o.g. that $U$ is chosen as in Definition \ref{DefInconsistent}, since this can be achieved by shrinking $U$. We further assume that $x \in E_1$. Let $Z_1,\dots,Z_{s}$ be the connected components of $U\cap S_{k-1}$, where $s\in \NN$ is suitably chosen. 

Let $j \in \{1,2,\dots,s\}$ and let $(a_n)_{n \in \NN}$ be a sequence in $Z_{j}$ with $\lim_{n \to \infty} a_n=x$, which exists by our choice of $U$. For all $n \in \NN$ it further holds by \eqref{EqIndHyp} that $a_n \in \overline{E}_1$. Thus, for each $n$ there exists a sequence $(b^n_m)_{m \in \NN}$ in $U \cap E_1$ with $\lim_{m \to \infty} b^n_m=a_n$. 
Put 
$$v_1 : E_1 \to T_pM, \quad v_1(y):= (v\circ s_1)(p,y),$$ 
where $v$ is the velocity map. By Proposition \ref{PropVelocityCont}, $v_1$ is continuous. Let $\exp_K:K \to M$ be given as in \eqref{EqexpK}. The set $K$ is homeomorphic to a closed ball in $T_pM$, see Remark \ref{RemarkIncons}.(1). By construction, $v_1(y) \in K$ for each $y \in E_1$, hence $(v_1(b^n_m))_{m \in \NN}$ is a sequence in $K$ for each $n \in \NN$. Since $K$ is compact, it has a convergent subsequence $(v_1(b^n_{m_k}))_{k \in \NN}$ for each $n \in\NN$. Put $\xi_n := \lim_{k \to \infty} v_1(b^n_{m_k})$ for all $n \in \NN$.  By continuity of the exponential map,
$$\exp_K(\xi_n)=\exp_p(\xi_n) = \lim_{k \to \infty}\exp_p( v_1(b^n_{m_k})) = \lim_{k \to \infty} b^n_{m_k}= a_n \qquad \forall n \in \NN.$$
Thus, 
$$\xi_n \in K \cap \exp_K^{-1}(\{a_n\}) \subset K \cap \exp_K^{-1}(Z_j) \quad \forall n \in \NN.$$
$(\xi_n)_{n \in \NN}$ is a sequence in $K$, so it has a convergent subsequence $(\xi_{n_\ell})_{\ell \in \NN}$. With $\xi_0 := \lim_{\ell \to \infty} \xi_{n_\ell}$ we obtain 
$$\exp_p(\xi_0)=\lim_{\ell \to \infty} \exp_p(\xi_{n_\ell})= \lim_{\ell \to \infty} a_{n_\ell}=x.$$
In particular, it follows from $x \in \Cut_p(M)$ that $\xi_0 \in \Cuttil_p(M)$. Since $\xi_n \in \exp_K^{-1}(\{a_n\})$ for each $n \in \NN$, we conclude that
$$\xi_0 \in \Cuttil_p(M) \cap\exp_p^{-1}(\{x\}) \cap \overline{\exp_K^{-1}(Z_j)}.$$
Note that $\xi_0$ depends on the choice of $j$. To conclude, we still need to show that the same $\xi_0$ can be chosen for each $j \in \{1,2,\dots,s\}$. We will do so by showing next that $\xi_0=v_1(x)$, which does not depend on $j$. 

Let $\dd_M: M \times M \to \RR$ be the distance function induced by the Riemannian metric.  By definition of the $\xi_{n_\ell}$, for each $\ell \in \NN$ there exists $k_\ell \in \NN$, such that $$\mathrm{d}_M(a_{n_\ell},b^{n_\ell}_{m_k}  )< \frac{1}{\ell} \quad \text{and} \quad \|\xi_{n_\ell}-v_1(b^{n_\ell}_{m_k})\| <  \frac{1}{\ell} \quad \forall k \geq k_\ell.$$
We can further choose the $k_\ell$ in such a way that $\lim_{\ell \to \infty} k_{\ell}=\infty$. By a diagonal argument, $\lim_{\ell \to \infty} b^{n_\ell}_{m_{k_\ell}} = x$. This particularly shows by continuity of $v_1$ that
$$\xi_0 = \lim_{\ell \to \infty} \xi_{n_\ell} = \lim_{\ell \to \infty} v_1(b^{n_\ell}_{m_{k_\ell}})= v_1(x).$$
Thus, $v_1(x) \in \exp_p^{-1}(\{x\}) \cap \overline{\exp_K^{-1}(Z_j)}$. Since $j$ was chosen arbitrarily, it follows that
$$v_1(x) \in \Cuttil_p(M) \cap \exp^{-1}_p(\{x\}) \cap \bigcap_{j=1}^{s} \overline{\exp_K^{-1}(Z_j)}.$$
This contradicts the inconsistency of the stratification $(S_1,\dots,S_N)$. Hence, there is no such $U$, which concludes the proof of the induction step. For $k=N$, it particularly follows from \eqref{EqInd} that $r \geq N+1$. Thus, $\GC_p(M)\geq N+1$.
\end{proof}

We will see in Section \ref{sec7-1} that flat tori are indeed examples for Riemannian manifolds whose cut loci admit inconsistent stratifications. Next we will discuss a more tangible criterion on a cut locus that implies the existence of an inconsistent stratification. For this purpose, we will use results and constructions of J.-I. Itoh and T. Sakai from \cite{ItohSakai}. Large parts of these methods are extensions of those applied by V. Ozols in \cite{Ozols}.

\begin{definition}[{\cite[p. 68 and Definition 2.1]{ItohSakai}}]
Let $(M,g)$ be a complete Riemannian manifold and let $p \in M$. 
\begin{enumerate}[a)]
\item We say that $q \in \Cut_p(M)$ is \emph{of order $k+1$}, where $k \in \NN$, if there are precisely $k+1$ minimal geodesics $\gamma_0,\gamma_1,\dots,\gamma_k \in GM$ with $\gamma_i \neq \gamma_j$ if $i \neq j$ and with $\gamma_i(0)=p$ and $\gamma_i(1)=q$ for all $i \in \{0,1,2,\dots,k\}$.
\item We call $q$ \emph{non-degenerate} if the vectors $\dot\gamma_0(1),\dot\gamma_1(1),\dots,\dot\gamma_k(1)\in T_qM$ are in general position, i.e. if $\{\dot\gamma_i(1)-\dot\gamma_0(1) \ | \ i \in \{1,2,\dots,k\}\}$ is linearly independent.
\end{enumerate}
\end{definition}

As carried out by Itoh and Sakai in \cite[Remark 2.2]{ItohSakai}, a large class of  two-dimensional flat tori provides an example for manifolds with non-degenerate cut points. However, our study of flat tori in Section \ref{sec7-1} will not rely on this notion of non-degeneracy, but will employ the above inconsistency condition directly. \medskip

We recall that a \emph{conjugate point} of a point $p$ in a Riemannian manifold $(M,g)$ is a point $q \in M$, such that there is a geodesic segment from $p$ to $q$ along which there exists a non-trivial Jacobi field which vanishes in $p$ and $q$, see \cite[p. 298]{LeeRiemann}.

\begin{remark}
\begin{enumerate} 
\item As shown by A. Weinstein in \cite[p. 29]{WeinsteinCut}, every closed manifold $M$ with $\dim M \geq 2$ and not homeomorphic to $S^2$ admits a Riemannian metric for which there exists $p \in M$ such that $\Cut_p(M)$ does not contain any conjugate points. Itoh and Sakai conjectured in \cite[Remark 2.9]{ItohSakai} that the set of all such metrics on $M$ contains as a dense subset the set of those metrics for which all points in $\Cut_p(M)$ are non-degenerate.
\item It is evident from the definition of non-degeneracy that the order of a non-degenerate cut point is at most $\dim M+1$.
\end{enumerate}
\end{remark}

\begin{theorem}
Let $(M,g)$ be a closed Riemannian manifold and assume that there exists $p \in M$ for which $\Cut_p(M)$ does not contain any conjugate points of $p$ and for which all points in $\Cut_p(M)$ are non-degenerate. 
Let $N:= \max \{k \in \NN \ | \ \exists q \in \Cut_p(M) \text{ of order } k+1\}$. Then $\Cut_p(M)$ admits an inconsistent stratification of depth $N$. 
\end{theorem}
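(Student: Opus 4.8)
The plan is to build the stratification directly from the order function on $\Cut_p(M)$. For $k \in \{1,2,\dots,N\}$ define
$$
S_k := \{ q \in \Cut_p(M) \mid q \text{ is of order } \geq k+1 \},
$$
so that $S_k$ consists of those cut points admitting at least $k+1$ distinct minimal geodesics from $p$; equivalently, using the tangent cut locus, $S_k = \exp_p\big(\{ v \in \Cuttil_p(M) \mid \#(\Cuttil_p(M) \cap \exp_p^{-1}(\exp_p(v))) \geq k+1\}\big)$. First I would establish, using the absence of conjugate points in $\Cut_p(M)$ together with the non-degeneracy hypothesis, that the order function is upper semi-continuous on $\Cut_p(M)$ and that each $S_k$ is a closed subset of $\Cut_p(M)$; since $\Cut_p(M)$ is itself closed in the compact $M$, each $S_k$ is compact. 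This uses the Itoh--Sakai (and Ozols) analysis: away from conjugate points, $\exp_p$ is a local diffeomorphism near each preimage of $q$, so the minimal geodesics from $p$ to nearby points depend smoothly on the endpoint, and the number of minimal geodesics cannot drop to a limit --- it can only jump up --- which gives $\overline{S_k} = S_k$ and also the filtration $S_1 \supset S_2 \supset \dots \supset S_N$. Reindexing by differences, one should actually take the strata to be $S_k \setminus S_{k+1}$ if a partition is wanted, but the nested family is the more convenient bookkeeping; I would then check that the nested closed family satisfies Definition \ref{DefStrat}, the key point being the local-closedness of the differences and condition (ii), which again follows from the local structure: near a non-degenerate, conjugate-point-free ordinary cut point of order $k+1$, the cut locus looks locally like the image under $\exp_p$ of the "conflict set" of $k+1$ affine pieces in general position, hence locally like a union of coordinate subspaces of codimensions $1, 2, \dots, k$, and connected components of a higher stratum are contained in closures of lower ones.

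The heart of the argument is verifying inconsistency in the sense of Definition \ref{DefInconsistent}. Fix $k \in \{2,\dots,N\}$ and $x \in S_k$ (in the partition-version, $x$ of order exactly some $m+1 \geq k+1$; I will handle the general nested version). Let $\gamma_0,\dots,\gamma_m$ be the minimal geodesics from $p$ to $x$, with velocities $v_0,\dots,v_m \in \Cuttil_p(M)$ at $p$ and terminal velocities $\dot\gamma_0(1),\dots,\dot\gamma_m(1) \in T_xM$ in general position. Since there are no conjugate points, near each $v_\ell$ the map $\exp_p$ is a diffeomorphism onto a neighborhood $W_\ell$ of $x$; shrinking, choose a single neighborhood $U$ of $x$ with smooth "branch" maps $\sigma_\ell \colon U \to T_pM$, $\sigma_\ell(y) \in W_\ell$, $\exp_p(\sigma_\ell(y)) = y$, $\sigma_\ell(x) = v_\ell$. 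On $U$ the cut locus is cut out by the finitely many equations $\rho_\ell(y) := |\sigma_\ell(y)| - |\sigma_0(y)| = 0$, and the distance-comparison/first-variation identity gives $\nabla \rho_\ell(x) = \dot\gamma_0(1) - \dot\gamma_\ell(1)$ up to the musical isomorphism, which are linearly independent by non-degeneracy. Hence $U \cap \Cut_p(M)$ is, after a local change of coordinates, the union of the $2^{?}$ "conflict cells" determined by which branch realizes the minimum, and $U \cap S_{k-1}$ is the union of those codimension-$(k-1)$ faces through $x$. Its connected components $Z_1,\dots,Z_s$ all contain $x$ in their closure (that is the first assertion in Definition \ref{DefInconsistent}, and it is immediate from this cell picture). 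Now I would show $\overline{\exp_K^{-1}(Z_j)}$ near $v_\ell$ is itself a smooth piece: since $\exp_p$ restricted to the relevant neighborhood of $v_\ell$ is a diffeomorphism, $\exp_K^{-1}(Z_j)$ decomposes as a disjoint union over $\ell$ of the graphs $\sigma_\ell(Z_j)$, and its closure meets $\exp_p^{-1}(x)$ exactly in the set $\{v_\ell : \overline{Z_j} \ni x \text{ via branch } \ell\}$ --- but by the cell structure, a given face $Z_j$ in $S_{k-1}$ is adjacent only to a proper subset of the branches. The crucial combinatorial claim is then: for $k-1 \geq 1$ codimension-one walls in general position, no single branch direction $v_\ell$ is adjacent to all the connected components $Z_1,\dots,Z_s$ of the codimension-$(k-1)$ stratum. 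Granting this, $\bigcap_{j=1}^s \overline{\exp_K^{-1}(Z_j)} \cap \exp_p^{-1}(x) = \emptyset$, which is exactly the inconsistency condition. Finally invoke Theorem \ref{TheoremLowerBound}.

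The main obstacle I anticipate is this last combinatorial/geometric step --- pinning down precisely the local model of $\Cut_p(M)$ near a non-degenerate conjugate-free cut point and the adjacency relations among the strata, i.e. that the conflict set of $m+1$ hyperplane-germs in general position really does stratify by "number of simultaneously minimizing branches" and that near $x$ the stratum $S_{k-1}$ has at least two local components, each missing a different branch. This is where the Itoh--Sakai machinery from \cite{ItohSakai} (building on \cite{Ozols}) does the real work, and I would cite their description of the local structure of cut loci at non-degenerate points rather than re-derive it; the only genuinely new content is translating that picture into the language of Definition \ref{DefInconsistent} and checking the empty-intersection statement. A secondary, more routine obstacle is the careful verification that the $S_k$ (and their successive differences) are locally closed and satisfy condition (ii) of Definition \ref{DefStrat} --- straightforward from semicontinuity of the order function and the local product structure, but it must be stated cleanly. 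I would also note that $N < \infty$ here because, by non-degeneracy, an order-$(k+1)$ point forces $k \leq \dim M$, so the supremum defining $N$ is attained and finite, making the induction in Theorem \ref{TheoremLowerBound} applicable.
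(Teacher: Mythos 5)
Your proposal follows essentially the same route as the paper: stratify $\Cut_p(M)$ by the (exact) order of cut points, invoking Itoh--Sakai for the stratification property, and verify inconsistency through the local branch picture at a non-conjugate, non-degenerate cut point (local inverses $\sigma_\ell$ of $\exp_p$, distance-difference functions with linearly independent gradients by non-degeneracy), concluding that each local component of the adjacent stratum is adjacent to all branches but one. The ``crucial combinatorial claim'' you defer is exactly what the paper establishes by its short explicit computation with the functions $f_i$ and $g_i$, identifying the $k+1$ components $Z_0,\dots,Z_k$ of $C_{k-1}\cap U$, the $i$-th of which omits precisely the branch $v_i$, so your outline is correct and matches the paper's argument.
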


\begin{proof}
Let $\mathcal{C}:=(C_1,\dots,C_N)$ be given by 
$$C_k:= \{q \in \Cut_p(M)\ | \ q \text{ is of order } k+1\} \quad \forall k \in \{1,2,\dots,N\}.$$
It is shown in \cite[Proposition 2.4]{ItohSakai} that under the non-degeneracy assumption on the points in $\Cut_p(M)$, $\mathcal{C}$ is a Whitney stratification of $\Cut_p(M)$, as defined in \cite[p. 37]{GoreskyMP}. Hence, $\mathcal{C}$ is in particular an $\mathscr{S}$-decomposition in the sense of Goresky and MacPherson, see \cite[p. 36]{GoreskyMP}. One checks immediately that the two conditions defining such an $\mathscr{S}$-decomposition imply that $\mathcal{C}$ is a stratification of $\Cut_p(M)$ in the sense of Definition \ref{DefStrat}.
It remains to show that $\mathcal{C}$ is inconsistent.  Fix $k\in \{1,2,\dots,N\}$, let $q \in C_k$ and let $\gamma_0,\gamma_1,\dots,\gamma_k:[0,1] \to M$ be geodesics from $p$ to $q$ with $\gamma_i \neq \gamma_j$ whenever $i \neq j$. For each $i \in \{0,1,\dots,k\}$ put $v_i := \dot\gamma_i(0) \in T_pM$, such that 
$$\Cuttil_p(M) \cap \exp_p^{-1}(\{q\})=\{v_0,v_1,\dots,v_k\}.$$
Choose an open neighborhood $U$ of $q$, such that $U \cap C_k$ is connected and such that 
\begin{equation}
\label{EqCutplocal}
\Cut_p(M) \cap U = \bigcup_{i=1}^k C_i \cap U.
\end{equation}
Such a neighborhood exists by the stratification properties. As discussed in \cite[p. 68]{ItohSakai}, since $q$ is non-degenerate, we can choose an open neighborhood $V_i\subset T_pM$ of $v_i$ for each $i \in \{0,1,\dots,k\}$, such that $\exp_p$ maps $V_i$ diffeomorphically onto $U$. Put $F_i:= (\exp|_{V_i})^{-1}:U \to V_i$. As explained in \cite[p.220f.]{Ozols}, up to shrinking $U$ we can assume that every minimal geodesic $\gamma$ from $p$ to an element of $U$ has $\dot\gamma(0) \in \bigcup_{i=0}^k V_i$. We further assume that $\overline{V}_i \cap \overline{V}_j= \emptyset$ whenever $i \neq j$.  For $i \in \{1,2,\dots,k\}$ we define $$f_i: U \to \RR,  \quad f_i(x) = \|F_i(x)\|- \|F_0(x) \|,$$ where $\|\cdot\|$ denotes the norm on $T_pM$ defined by the Riemannian metric. 
 With $f:U \to \RR^k$, $f=(f_1,f_2,\dots,f_k)$, it follows that $f^{-1}(\{0\}) = C_{k}\cap U.$ 	
For $i \in \{1,2,\dots,k\}$ we further let 
$$g_i: U \to \RR^{k-1}, \qquad g_i = (f_1,\dots,f_{i-1},f_{i+1},\dots,f_k)$$
and put 
$$g_0: U \to \RR^{k-1}, \quad g_0(x) = \left( \|F_2(x)\|-\|F_1(x)\|,\|F_3(x)\|-\|F_1(x)\|,\dots,\|F_k(x)\|-\|F_1(x)\|\right).$$
Then, by assumption on $U$,  
$$C_{k-1}\cap U = \bigcup_{i=0}^k g_i^{-1}(\{0\}) \setminus C_{k}= \bigcup_{i=0}^k g_i^{-1}(\{0\}) \setminus f^{-1}(\{0\}).$$
The connected components of $C_{k-1}\cap U$ are the sets $Z_0,Z_1,\dots,Z_k$, where
\begin{align*}
Z_i := g_i^{-1}(\{0\}) \cap f_i^{-1}(0,+\infty) \ \ \forall i \in \{1,2,\dots,k\}, \quad Z_0 := g_0^{-1}(\{0\})\cap f_1^{-1}(-\infty,0).
\end{align*}
By construction of the sets, 
$$\Cuttil_p(M) \cap \exp_p^{-1}(Z_i) \subset \bigcup_{j \neq i} V_j \quad \forall i \in \{0,1,\dots,k\}.$$ 
A closer investigation, using that $\Cuttil_p(M) \cap \exp_p^{-1}(\{q\})=\{v_0,v_1,\dots,v_k\}$ and that the closures of the $V_i$ are pairwise disjoint, shows that
$$\Cuttil_p(M) \cap \exp^{-1}_p(\{q\}) \cap \overline{\exp_p^{-1}(Z_i)}=\{v_0,v_1,\dots,v_{i-1},v_{i+1},\dots,v_k\} \qquad \forall i \in \{0,1,\dots,k\}.$$
This implies $$\Cuttil_p(M) \cap \exp^{-1}_p(\{q\}) \cap \bigcap_{i=0}^k \overline{\exp_p^{-1}(Z_i)}=\emptyset.$$ 
Since $k$ and $q$ were chosen arbitrarily, this shows the inconsistency of $\mathcal{C}$.
\end{proof}

Combining the previous theorem with our lower bound from Theorem \ref{TheoremLowerBound} yields:

\begin{cor}
\label{CorLowerOrder}
Let $(M,g)$ be a closed Riemannian manifold and assume that there exists $p \in M$, such that $\Cut_p(M)$ does not contain any conjugate points of $p$ and such that all points in $\Cut_p(M)$ are non-degenerate. If $\Cut_p(M)$ contains a point of order $k+1$, where $k \in \NN$, then 
$$\GC(M,g) \geq k+1.$$
\end{cor}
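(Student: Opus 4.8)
The plan is to deduce Corollary \ref{CorLowerOrder} directly from the two results immediately preceding it, treating it as a straightforward combination. First I would invoke the theorem just above: under the stated hypotheses on $p$ — namely that $\Cut_p(M)$ contains no conjugate points of $p$ and every point of $\Cut_p(M)$ is non-degenerate — that theorem produces an inconsistent stratification of $\Cut_p(M)$ of depth $N$, where $N = \sup\{k \in \NN \ | \ \exists q \in \Cut_p(M) \text{ of order } k+1\}$. By hypothesis there is a point of order $k+1$ in $\Cut_p(M)$, so $N \geq k$.

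Next I would feed this inconsistent stratification into Theorem \ref{TheoremLowerBound}. Since $(M,g)$ is closed, that theorem applies and yields $\GC(M,g) \geq \GC_p(\Cut_p(M)) \geq N + 1$. Combining with $N \geq k$ gives $\GC(M,g) \geq N+1 \geq k+1$, which is exactly the claim. The only mild subtlety is that Theorem \ref{TheoremLowerBound} is stated for a stratification of depth exactly $N$; here I would note that if the supremum $N$ is strictly larger than $k$, the conclusion $\GC(M,g) \geq N+1 \geq k+1$ is only stronger, and if one prefers one can simply apply the theorem with the full stratification of depth $N$ it provides. No truncation of the stratification is needed.

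I do not expect any genuine obstacle: the corollary is a formal consequence, so the write-up is essentially two sentences chaining the quoted results, plus the observation $N \geq k$. If anything required care it would be confirming that the hypotheses of the intermediate theorem (closedness of $M$, no conjugate points in the cut locus, non-degeneracy of all cut points) match verbatim the hypotheses of the corollary — they do — and that "contains a point of order $k+1$" is precisely the condition ensuring $N$ is at least $k$ in the definition $N = \sup\{k' \mid \exists q \text{ of order } k'+1\}$.

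\begin{proof}
By assumption, $(M,g)$ is a closed Riemannian manifold and $p \in M$ is a point such that $\Cut_p(M)$ contains no conjugate points of $p$ and all points of $\Cut_p(M)$ are non-degenerate. Let
$$N := \sup\{m \in \NN \ | \ \exists q \in \Cut_p(M) \text{ of order } m+1\}.$$
By the previous theorem, $\Cut_p(M)$ admits an inconsistent stratification of depth $N$. Since $\Cut_p(M)$ contains a point of order $k+1$, we have $N \geq k$. Applying Theorem \ref{TheoremLowerBound} to this inconsistent stratification, we obtain
$$\GC(M,g) \geq \GC_p(\Cut_p(M)) \geq N+1 \geq k+1,$$
which is the desired inequality.
\end{proof}
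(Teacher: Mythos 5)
Your proposal is correct and matches the paper exactly: the paper states this corollary as an immediate combination of the preceding theorem (which produces an inconsistent stratification of depth $N = \sup\{m \mid \exists q \in \Cut_p(M) \text{ of order } m+1\}$ under these hypotheses) with Theorem \ref{TheoremLowerBound}, together with the trivial observation $N \geq k$. Your write-up, including the remark that a larger $N$ only strengthens the conclusion, is exactly the intended argument.
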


\section{An upper bound for homogeneous Riemannian manifolds}
\label{sec5}
From this section on, we will mostly consider homogeneous Riemannian manifolds and exploit their symmetry properties. Given a Riemannian manifold $(M,g)$, we let $\Isom(M):= \Isom(M,g)$ denote its group of isometries and consider it as a subspace of $C^0(M,M)$ with the compact-open topology. We recall that $(M,g)$ is called \emph{homogeneous} if $\Isom(M)$ acts transitively on $M$. Note that every homogeneous Riemannian manifold is necessarily complete, see \cite[Theorem IV.4.5]{KobaNomizu}.

Having derived lower bounds for geodesic complexity in the previous section, we next want to find upper bounds. After some preparatory lemmas, we will establish an upper bound on $\GC(M)$ for a homogeneous Riemannian manifold $M$ in terms of the subspace complexity $\GC_M(\{p\} \times \Cut_p(M))$ and a categorical invariant determined by its isometry action. Intuitively, the transitivity of the isometry action implies that continuous geodesic motion planners on subsets of cut loci of single points can be continuously extended to larger subsets of the total cut locus. We will then go on to study further upper bounds on $\GC(M)$ in the case that $\Cut_p(M)$ admits a stratification. The following is a folklore result from Riemannian geometry. 
\begin{lemma}
Let $(M,g)$ be a homogeneous Riemannian manifold and let $p \in M$. Then 
$$\ev_p: \Isom(M) \to M, \qquad \ev_p(\phi)=\phi(p),$$
is a principal $\Isom_p(M)$-bundle, where $\Isom_p(M)$ denotes the isotropy group of the isometry action on $M$ in $p$.
\end{lemma}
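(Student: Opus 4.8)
The plan is to realize $e_p$ as the composition of the quotient projection $\Isom(M) \to \Isom(M)/\Isom_p(M)$ with a diffeomorphism onto $M$, and then transport the standard principal-bundle structure of the former along this diffeomorphism.

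First I would invoke the Myers--Steenrod theorem: $\Isom(M)$ carries the structure of a finite-dimensional, second-countable Lie group, and the natural action $\Isom(M)\times M \to M$ is smooth; homogeneity of $(M,g)$ means precisely that this action is transitive, so $e_p$ is surjective. The isotropy group $\Isom_p(M) = e_p^{-1}(\{p\})$ is closed in $\Isom(M)$, hence by Cartan's closed-subgroup theorem it is an embedded Lie subgroup. Consequently $\Isom(M)/\Isom_p(M)$ is a smooth manifold and the projection $q \colon \Isom(M) \to \Isom(M)/\Isom_p(M)$ is a principal $\Isom_p(M)$-bundle, with $\Isom_p(M)$ acting freely by right multiplication; this is the classical structure theorem for homogeneous spaces, whose local trivializations come from submanifold charts of $\Isom_p(M)$ inside $\Isom(M)$.

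Next I would show that $e_p$ is a submersion and that the induced map $\bar e_p \colon \Isom(M)/\Isom_p(M) \to M$ is a diffeomorphism. Equivariance, $e_p(g\psi) = g\cdot e_p(\psi)$, together with the fact that left translation by $g$ and the action of $g$ on $M$ are diffeomorphisms, forces $e_p$ to have constant rank $r$. If $r$ were less than $\dim M$, the rank theorem would place $\im e_p$ locally inside lower-dimensional slices, and second-countability of $\Isom(M)$ combined with the Baire category theorem would make $\im e_p$ meager in $M$, contradicting surjectivity; hence $e_p$ is a submersion. It factors as $e_p = \bar e_p \circ q$ with $\bar e_p$ the orbit--stabilizer bijection, which is smooth; since $q$ and $e_p$ are submersions, so is $\bar e_p$, and the dimension count $\dim \Isom(M)/\Isom_p(M) = r = \dim M$ makes $\bar e_p$ a local diffeomorphism, hence a diffeomorphism.

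Finally I would conclude by pulling the local trivializations of $q$ back along $\bar e_p^{-1}$: over a suitable open cover $\{V\}$ of $M$ this produces $\Isom_p(M)$-equivariant homeomorphisms $e_p^{-1}(V) \cong V \times \Isom_p(M)$ with transition functions valued in $\Isom_p(M)$; equivalently, each local section of $q$ yields a local section $\sigma \colon V \to \Isom(M)$ of $e_p$, and $(x,h)\mapsto \sigma(x)h$ trivializes $e_p$ over $V$. As the fibers of $e_p$ are exactly the right cosets of $\Isom_p(M)$, on which $\Isom_p(M)$ acts freely and transitively by right multiplication, this exhibits $e_p$ as a principal $\Isom_p(M)$-bundle. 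The one genuinely non-formal point, and the step I expect to require the most care, is the claim that $\bar e_p$ is a diffeomorphism rather than merely a continuous bijection: this is exactly where the Lie-group structure supplied by Myers--Steenrod, the constant-rank argument, and the second-countability/Baire input are indispensable, since for a general transitive continuous action the abstract quotient need not be identified smoothly with $M$.
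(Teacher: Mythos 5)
Your argument is correct, but it follows a genuinely different route from the paper. You prove the statement entirely in the smooth category: Myers--Steenrod gives the Lie group structure and smoothness of the action, Cartan's closed-subgroup theorem makes $\Isom_p(M)$ a Lie subgroup, the quotient $\Isom(M)/\Isom_p(M)$ carries its standard principal-bundle projection, and the heart of your proof is the classical homogeneous-space theorem: equivariance forces $e_p$ to have constant rank, the rank theorem plus second countability and Baire category rule out rank less than $\dim M$, so $e_p$ is a submersion and the induced map $\Isom(M)/\Isom_p(M)\to M$ is a diffeomorphism, along which you transport the trivializations. The paper instead argues topologically: it verifies by hand, using the subbasis $M(K,V)$ of the compact-open topology and transitivity, that $e_p$ is an \emph{open} map, then invokes Steenrod's results (Sections 7.3--7.5 of \cite{Steenrod}) to identify $M$ with $\Isom(M)/\Isom_p(M)$ as topological spaces and to obtain the fiber bundle structure with fiber $\Isom_p(M)$, and finally upgrades to a principal bundle via freeness and properness of the $\Isom_p(M)$-action on the Lie group $\Isom(M)$. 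Your route is self-contained modulo standard Lie theory and yields strictly more (a smooth identification and the fact that $e_p$ is a submersion), whereas the paper's route keeps the hands-on part minimal (just openness of $e_p$) and outsources the bundle-theoretic work to classical references; both rest on Myers--Steenrod. Two cosmetic points: your phrase \textquotedblleft right cosets of $\Isom_p(M)$\textquotedblright{} refers to the sets $\phi\,\Isom_p(M)$, which most authors call left cosets (the mathematical content, that $\Isom_p(M)$ acts freely and transitively on each fiber by right composition, is correct), and for the Baire step you should note explicitly that $\Isom(M)$ is second countable because it is a subspace of $C^0(M,M)$ with the compact-open topology over a second countable, locally compact base --- which is exactly the topology the paper works with.
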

\begin{proof}
By \cite[Theorem 21.17]{LeeSmooth}, $\ev_p$ induces an $\Isom(M)$-equivariant diffeomorphism $f: \Isom(M)/\Isom_p(M) \to M$. Moreover, the projection $q:\Isom(M) \to \Isom(M)/\Isom_p(M)$ is a principal $\Isom_p(M)$-bundle by \cite[Example I.5.1]{KobaNomizu}. One easily shows that $\ev_p = f \circ q$, which implies that $\ev_p$ is a principal $\Isom_p(M)$-bundle as well. 
\end{proof}

\begin{example}
\label{ExampleLeftInv}
Given a Lie group $G$ with a left-invariant Riemannian metric,  the left-multiplication $\ell_g:G \to G$, $\ell_g(h)=gh$, is an isometry for each $g \in G$. With $e \in G$ denoting the unit, one further derives from $\ell_g(e)=g$ for each $g \in G$ that the map $s: G \to \Isom(G)$, $s(g)= \ell_g$, is a continuous section of the bundle $\ev_e:\Isom(G)\to G$.
\end{example}

\begin{lemma}
\label{LemmaShiftGeodesics}
Let $A,B \subset M$ and $p \in M$. Assume that there are a continuous geodesic motion planner $\sigma_B:\{p\} \times B \to GM$ and a continuous local section $s:A \to \Isom(M)$ of $\ev_p$. Then there exists a continuous geodesic motion planner $\sigma: F \to GM$, where $$F := \{(x,y) \in M \times M \ | \ x \in A, \ y \in s(x)\cdot B\}.$$
\end{lemma}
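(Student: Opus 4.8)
The plan is to transport the given motion planner $\sigma_B$ on $\{p\}\times B$ along the isometries provided by the section $s$. For a point $(x,y)\in F$ we have $x\in A$, so $\phi:=s(x)\in\Isom(M)$ is well-defined and depends continuously on $x$; moreover $\phi(p)=e_p(s(x))=x$ since $s$ is a section of $e_p$. Since $y\in(s(x))(B)=\phi(B)$, there is a unique $b\in B$ with $y=\phi(b)$, namely $b=\phi^{-1}(y)$. The natural candidate is
\begin{equation*}
\sigma(x,y):=\phi\circ\sigma_B\big(p,\phi^{-1}(y)\big)=s(x)\circ\sigma_B\big(p,(s(x))^{-1}(y)\big).
\end{equation*}
First I would check that this is a well-defined geodesic motion planner: $\sigma_B(p,\phi^{-1}(y))$ is a minimal geodesic from $p$ to $\phi^{-1}(y)$, and since $\phi$ is an isometry it maps minimal geodesics to minimal geodesics, so $\phi\circ\sigma_B(p,\phi^{-1}(y))\in GM$ is a minimal geodesic from $\phi(p)=x$ to $\phi(\phi^{-1}(y))=y$. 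Hence $\pi(\sigma(x,y))=(x,y)$, so $\sigma$ is a section of $\pi$ over $F$.

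The remaining point is continuity of $\sigma:F\to GM$. I would assemble it from continuity of the following maps: $x\mapsto s(x)$ on $A$ (continuity of the section), the inversion $\Isom(M)\to\Isom(M)$, $\phi\mapsto\phi^{-1}$ (a Lie group, hence topological group, so inversion is continuous), the evaluation-type map $\Isom(M)\times M\to M$, $(\phi,z)\mapsto\phi(z)$ (continuity of the isometry action, e.g.\ via the Myers--Steenrod theorem and the compact-open topology), and the continuity of $\sigma_B$. Composing, the map $(x,y)\mapsto(p,(s(x))^{-1}(y))$ is continuous into $\{p\}\times B$, so $(x,y)\mapsto\sigma_B(p,(s(x))^{-1}(y))\in GM$ is continuous. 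Finally one needs that the action of $\Isom(M)$ on $PM=C^0([0,1],M)$ by post-composition, $(\phi,\gamma)\mapsto\phi\circ\gamma$, is continuous with respect to the compact-open topology; this follows because $[0,1]$ is compact and the evaluation map is continuous, and this action restricts to an action on $GM$ since isometries preserve minimal geodesics. Chaining these continuous maps gives continuity of $\sigma$.

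The main obstacle I anticipate is purely a matter of bookkeeping with topologies: one must be careful that the post-composition action $(\phi,\gamma)\mapsto\phi\circ\gamma$ is jointly continuous on $\Isom(M)\times GM$ in the compact-open topology, rather than merely separately continuous. This is standard (joint continuity of the composition $C^0(K,Y)\times C^0(Y,Z)\to C^0(K,Z)$ holds when $K$ is compact and the spaces are reasonable, e.g.\ metrizable), but it is the step where the proof could become technical if stated carefully; everything else (the section property giving $\phi(p)=x$, isometries preserving minimality, surjectivity of $b\mapsto\phi(b)$ onto $\phi(B)$) is essentially immediate. I would therefore structure the write-up as: (1) define $\sigma$ by the formula above; (2) verify $\pi\circ\sigma=\id_F$ using that $s$ is a section and isometries send minimal geodesics to minimal geodesics; (3) verify continuity by exhibiting $\sigma$ as a composite of the continuous maps listed above, invoking the continuity of inversion and of the isometry action together with joint continuity of post-composition on path spaces.
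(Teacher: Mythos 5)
Your proposal is correct and follows essentially the same route as the paper: the same formula $\sigma(x,y)=s(x)\circ\sigma_B(p,(s(x))^{-1}(y))$, the same verification that isometries preserve minimal geodesics and that the section property gives $s(x)(p)=x$, and the same continuity argument via continuity of $s$, inversion in $\Isom(M)$, the evaluation action, and the induced (post-composition) action of $\Isom(M)$ on the path space restricted to $GM$. Your extra remark on joint continuity of post-composition in the compact-open topology is exactly the point the paper implicitly uses.
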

\begin{proof}
We define $\sigma:F \to GM$ by 
$$\sigma(x,y) = s(x)\circ \sigma_B(p,s(x)^{-1}\cdot y) \qquad \forall (x,y) \in F.$$
By construction $\sigma_B(p,s(x)^{-1}\cdot y)$ is a minimal geodesic from $p$ to $s(x)^{-1}\cdot y$. Since $s(x)$ is an isometry for each $x$, $\sigma(x,y)$ is indeed a minimal geodesic from 
$$s(x)\cdot p = \ev_p(s(x))= x\qquad \text{to} \qquad s(x)\cdot (s(x)^{-1}\cdot y)=y.$$ 
So $\sigma$ is a geodesic motion planner and it only remains to show its continuity. 

Let $\rho:\Isom(M) \times M \to M$ denote the action of the isometry group by evaluation and let again $PM=C^0([0,1],M)$. 
By \cite[Theorem VII.2.10]{Bredon} the composition map
$$  \varphi : C^0(M,M)\times PM \to PM,   \qquad  \varphi (f,\gamma) = f\circ \gamma, $$
is continuous with respect to the compact-open topologies. Thus, the restriction of $\varphi$ to
$$   \Isom(M)\times GM \subset C^0(M,M)\times PM   $$
defines a continuous action $$\tilde{\rho } : \Isom(M) \times GM \to GM, \qquad 
\tilde{\rho} = \varphi|_{\Isom(M)\times GM}.$$
The inversion $i:\Isom(M) \to \Isom(M)$, $i(g)=g^{-1}$, is continuous since $\Isom(M)$ is a topological group. We can express $\sigma$ as 
$$\sigma(x,y)= \tilde\rho(s(x),\sigma_B(p,\rho(i(s(x)),y)) \qquad \forall (x,y) \in F.$$
All maps on the right-hand side are continuous, so $\sigma$ is continuous as well.
\end{proof}

The previous lemma allows us to make a useful estimate between the subspace geodesic complexity of the total cut locus and the one of one single cut locus in the homogeneous case.

\begin{theorem} 
\label{TheoremGCCut}
Let $(M,g)$ be a homogeneous Riemannian manifold and let $p \in M$. Then
$$\GC(M) \leq \secat(\ev_p:\Isom(M) \to M) \cdot \GC_p(\Cut_{p}(M))+1.$$
\end{theorem}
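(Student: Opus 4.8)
The plan is to combine three ingredients: Remark \ref{RemarkOCut}.(2), which reduces $\GC(M)$ to $\GC_M(\OCut(M))+1$; Lemma \ref{LemmaSecatDecomp} applied to the fibration $e_p \colon \Isom(M) \to M$ to obtain a decomposition of $M$ into locally compact pieces carrying continuous local sections; and Lemma \ref{LemmaShiftGeodesics}, which transports a geodesic motion planner on $\{p\} \times \OCut_p(M)$ along such a section. First I would set $m := \secat(e_p \colon \Isom(M) \to M)$ and $n := \GC_p(\OCut_p(M))$. Since $\Isom(M)$ is a Lie group (Myers--Steenrod) and $M$ is a manifold, both are ANRs and $M$ is normal, so Lemma \ref{LemmaSecatDecomp} yields pairwise disjoint locally compact $A_1, \dots, A_m \subset M$ with $\bigcup_j A_j = M$ and continuous local sections $\tau_j \colon A_j \to \Isom(M)$ of $e_p$. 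On the other side, by definition of $\GC_p(\OCut_p(M))$ there are pairwise disjoint locally compact $B_1, \dots, B_n \subset M$ with $\OCut_p(M) \subset \bigcup_k B_k$ and continuous geodesic motion planners $\sigma_k \colon \{p\} \times B_k \to GM$.

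\textbf{Assembling the pieces.} For each pair $(j,k)$, Lemma \ref{LemmaShiftGeodesics} (applied with $A = A_j$, $B = B_k$, $s = \tau_j$, $\sigma_B = \sigma_k$) produces a continuous geodesic motion planner $\sigma_{j,k} \colon F_{j,k} \to GM$ on
$$F_{j,k} := \{(x,y) \in M \times M \ | \ x \in A_j, \ y \in (\tau_j(x))(B_k)\}.$$
I would then check two things. First, the $F_{j,k}$ are pairwise disjoint: if $(j,k) \neq (j',k')$ and $j \neq j'$ this is immediate since the $A_j$ are disjoint; if $j = j'$ but $k \neq k'$, then for a fixed $x \in A_j$ the two fibers are $(\tau_j(x))(B_k)$ and $(\tau_j(x))(B_{k'})$, which are disjoint because $\tau_j(x)$ is a bijection and $B_k \cap B_{k'} = \emptyset$. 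Second, each $F_{j,k}$ is locally compact: it is the image of the locally compact set $\{(x,z) \in A_j \times M \ | \ z \in B_k\}$ under the homeomorphism $(x,z) \mapsto (x, (\tau_j(x))(z))$ of $A_j \times M$, with inverse $(x,y) \mapsto (x, (\tau_j(x))^{-1}(y))$; both maps are continuous using that the $\Isom(M)$-action and inversion are continuous, exactly as in the proof of Lemma \ref{LemmaShiftGeodesics}.

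\textbf{Covering the total ordinary cut locus.} The key remaining point is that $\bigcup_{j,k} F_{j,k} \supset \OCut(M)$. Given $(x,y) \in \OCut(M)$, so $y \in \OCut_x(M)$, pick the unique $j$ with $x \in A_j$ and set $\phi := \tau_j(x) \in \Isom(M)$, which satisfies $\phi(p) = x$. Since isometries map cut loci to cut loci (and ordinary cut points to ordinary cut points), $\phi^{-1}$ carries $\OCut_x(M)$ onto $\OCut_p(M)$, so $\phi^{-1}(y) \in \OCut_p(M) \subset \bigcup_k B_k$; choosing $k$ with $\phi^{-1}(y) \in B_k$ gives $y \in \phi(B_k) = (\tau_j(x))(B_k)$, i.e. $(x,y) \in F_{j,k}$. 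Hence $\{F_{j,k}\}_{j,k}$ is a decomposition of a set containing $\OCut(M)$ into $mn$ pairwise disjoint locally compact subsets each admitting a continuous geodesic motion planner, so $\GC_M(\OCut(M)) \leq mn$. Combining with Remark \ref{RemarkOCut}.(2) gives $\GC(M) \leq \GC_M(\OCut(M)) + 1 \leq mn + 1$, which is the claim.

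\textbf{Main obstacle.} The genuinely substantive step is the local compactness of the $F_{j,k}$ together with their disjointness and the covering property; the identification of $F_{j,k}$ as a homeomorphic image of $\{(x,z) \in A_j \times M : z \in B_k\}$ is what makes local compactness work, and the fact that isometries permute (ordinary) cut loci is what makes the covering work. Verifying that Lemma \ref{LemmaSecatDecomp} applies requires only the standard facts that Lie groups and smooth manifolds are ANRs and that $e_p$ is a fibration (being a principal bundle), so that part is routine.
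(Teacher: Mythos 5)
Your proposal is correct and follows essentially the same route as the paper's proof: reduce to $\GC_M(\OCut(M))$ via Remark \ref{RemarkOCut}.(2), decompose $M$ using Lemma \ref{LemmaSecatDecomp} applied to $e_p$, and combine with a decomposition of $\OCut_p(M)$ via Lemma \ref{LemmaShiftGeodesics} on the sets $F_{j,k}$. The only difference is that you spell out the disjointness, local compactness, and covering verifications that the paper states as ``by construction,'' and these verifications are carried out correctly.
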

\begin{proof}
As seen in Remark \ref{RemarkOCut}, it holds that $\GC(M) \leq \GC_M(\Cut(M))+1$, so it suffices to show that 
$$\GC_M(\Cut(M)) \leq \secat(\ev_p) \cdot \GC_p(\Cut_{p}(M)).$$
Let $k := \secat(\ev_p)$ and $r:= \GC_p( \Cut_p(M))$. By Lemma \ref{LemmaSecatDecomp}, there are pairwise disjoint locally compact $A_1,\dots,A_k \subset M$ with $M = \bigcup_{i=1}^k A_i$, for which there is a continuous local section $s_i:A_i \to \Isom(M)$ of $\ev_p$ for each $i \in \{1,2,\dots,k\}$.
 Let $B_1,\dots,B_r\subset M$ be pairwise disjoint and locally compact with $\Cut_p(M) \subset \bigcup_{j=1}^r B_j$, such that for each $j$ there exists a continuous geodesic motion planner  $\sigma_j:\{p\}\times B_j \to GM$. Put
$$F_{i,j} := \left\{(x,y) \in M\times M \ | \ x \in A_i, \ y \in s_i(x)\cdot B_j \right\} \qquad \forall i \in \{1,2,\dots,k\}, \ j \in \{1,2,\dots,r\}.$$
By construction, the elements of $\{F_{i,j}\ | \ i \in \{1,2,\dots,k\}, \ j \in \{1,2,\dots,r\}\}$ are pairwise disjoint.
Furthermore, for all $i\in\{1,2,\dots,k\}$ and $j\in\{1,2,\dots,r\}$ the following map is a homeomorphism: $$\psi_{i,j}: A_i\times B_j \to F_{i,j}, \qquad   \psi_{i,j}(x,y)  = (x,s_i(x)\cdot y).     $$
Consequently, the $F_{i,j}$ are locally compact. If $(x,y)\in\Cut(M)$, then $x\in A_i$ for some $i\in\{1,2,\dots,k\}$.
Since $s_i(x)^{-1}$ is an isometry, it holds that $s_i(x)^{-1}\cdot y\in \Cut_p(M) $.
Hence, there is a $j\in\{1,2,\dots,r\}$ with $s_i(x)^{-1}\cdot y\in B_j$ and therefore $(x,y)\in F_{i,j}$ by definition.
This shows that
$$\Cut(M) \subset \bigcup_{i=1}^k \bigcup_{j=1}^r F_{i,j}.$$
Moreover, by Lemma \ref{LemmaShiftGeodesics} we can find a continuous geodesic motion planner $F_{i,j} \to GM$ of $p$ for all $i$ and $j$. Thus, $\GC_M(\Cut(M)) \leq k r$, which shows the claim.
\end{proof}

The previous upper bound has a  particularly strong consequence for connected Lie groups.

\begin{cor}
\label{CorLieGroup}
Let $G$ be a connected Lie group equipped with a left-invariant Riemannian metric and let $e\in G$ denote the unit element. Then
$$\GC(G) \leq \GC_e(\Cut_e(G))+1.$$
\end{cor}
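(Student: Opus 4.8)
The plan is to deduce Corollary \ref{CorLieGroup} directly from Theorem \ref{TheoremGCCut} by exhibiting a global continuous section of the evaluation bundle $e_1 \colon \Isom(G) \to G$, which forces $\secat(e_1) = 1$. First I would recall that a connected Lie group $G$ with a left-invariant metric acts on itself by left translations, and that every left translation $L_g \colon G \to G$, $L_g(x) = gx$, is an isometry. This gives a homomorphism $L \colon G \to \Isom(G)$, $g \mapsto L_g$, which is continuous (indeed smooth) for the Lie group topology on $\Isom(G)$ coming from Myers--Steenrod. Since $L_g(1) = g$, the map $L$ is a continuous section of $e_1$: we have $e_1 \circ L = \id_G$.

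Next I would invoke Lemma \ref{LemmaSecatDecomp} together with the fact that a fibration admitting a global section has sectional category $1$; concretely, $\secat(e_1 \colon \Isom(G) \to G) = 1$ because $\{G\}$ is an open cover of $G$ by a single set over which $e_1$ admits the continuous section $L$. (One should note that $\Isom(G)$ and $G$ are ANRs, being manifolds, so Lemma \ref{LemmaSecatDecomp} applies, though for the bound $\secat \le 1$ only the existence of a global section is needed and no decomposition subtlety arises.) Then Theorem \ref{TheoremGCCut}, applied with $p = 1$, immediately yields
$$\GC(G) \leq \secat(e_1) \cdot \GC_1(\OCut_1(G)) + 1 = \GC_1(\OCut_1(G)) + 1,$$
which is exactly the claimed inequality.

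The only point requiring a word of care is the continuity of $L \colon G \to \Isom(G)$ with respect to the correct topology on $\Isom(G)$, i.e. that left translation depends continuously on $g$ in the compact-open topology on $C^0(G,G)$ (equivalently, in the Lie group topology, by Myers--Steenrod). This is a routine verification: given a subbasic open set $M(K,V) = \{h : h(K) \subset V\}$ with $K \subset G$ compact and $V \subset G$ open, the set $\{g \in G : gK \subset V\}$ is open by continuity and properness of the multiplication map $G \times G \to G$. I do not expect any genuine obstacle here; the corollary is essentially an observation that homogeneity of a Lie group under its own left-translation action is witnessed by a \emph{global} section of the frame-type bundle $e_1$, collapsing the $\secat$ factor in Theorem \ref{TheoremGCCut} to $1$.
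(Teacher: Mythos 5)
Your argument is correct and is essentially the paper's own proof: the paper also exhibits the left-translation map $g \mapsto \ell_g$ as a global continuous section of $e_1\colon \Isom(G)\to G$, concludes $\secat(e_1)=1$, and applies Theorem \ref{TheoremGCCut}. Your extra remarks on continuity of $g\mapsto L_g$ in the compact-open topology are fine but not needed beyond what the paper takes as immediate.
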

\begin{proof}
This is an immediate consequence of Theorem \ref{TheoremGCCut}. Since $\ev_e: \Isom(G) \to G$ admits a continuous section, see Example \ref{ExampleLeftInv}, it follows that $\secat(\ev_e)=1$. 
\end{proof}

Sectional categories of fibrations are in general hard to compute. A common way of estimating their values from above is by the Lusternik-Schnirelmann categories of their base spaces using \cite[Theorem 18]{SchwarzGenus}. In our situation, this leads to the following estimate.

\begin{cor}
Let $(M,g)$ be a homogeneous Riemannian manifold and let $p \in M$. Then 
$$\GC(M) \leq \cat(M) \cdot \GC_p(\Cut_p(M))+1.$$
\end{cor}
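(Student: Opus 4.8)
The plan is to deduce this corollary from Theorem~\ref{TheoremGCCut} by bounding $\secat(e_p\colon \Isom(M)\to M)$ from above by $\cat(M)$. The key observation is a general principle due to Schwarz: for any fibration $p\colon E\to B$ one has $\secat(p)\le \cat(B)$. Indeed, if $\{U_1,\dots,U_k\}$ is a categorical open cover of $B$, i.e. each inclusion $U_i\hookrightarrow B$ is null-homotopic, then over each $U_i$ the fibration $p$ admits a continuous section: a null-homotopy of $U_i\hookrightarrow B$ together with the homotopy lifting property of $p$ produces, from any constant (local) section over a point, a section over all of $U_i$. Hence $\secat(p)\le \cat(B)$, and applying this to $e_p$ gives $\secat(e_p\colon\Isom(M)\to M)\le \cat(M)$.

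Concretely, I would write the proof as follows: First recall that $e_p\colon\Isom(M)\to M$ is a principal $\Isom_p(M)$-bundle by the lemma proved at the start of Section~\ref{sec5}, and in particular a fibration. Second, invoke the inequality $\secat(e_p)\le \cat(M)$, citing \cite{SchwarzGenus} (this is essentially \cite[Theorem~5]{SchwarzGenus} or the standard fact that sectional category is bounded above by the Lusternik--Schnirelmann category of the base; it also appears in \cite{FarberBook}). Third, substitute this bound into the conclusion of Theorem~\ref{TheoremGCCut}:
$$\GC(M) \le \secat(e_p)\cdot \GC_p(\OCut_p(M))+1 \le \cat(M)\cdot \GC_p(\OCut_p(M))+1,$$
which is exactly the claim.

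There is no real obstacle here; the corollary is a formal consequence of Theorem~\ref{TheoremGCCut} combined with a classical fact about sectional category. The only point requiring minor care is making sure the cited bound $\secat(p)\le\cat(B)$ is stated in a form applicable to a fibration over a (paracompact, ANR) base, which is unproblematic for homogeneous Riemannian manifolds since they are smooth manifolds. I would keep the proof to two or three sentences and let Theorem~\ref{TheoremGCCut} do the work.

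\begin{proof}
By the first lemma of this section, $e_p\colon \Isom(M)\to M$ is a principal $\Isom_p(M)$-bundle and in particular a fibration. By a classical result of Schwarz, see \cite[Theorem 5]{SchwarzGenus}, the sectional category of a fibration is bounded above by the Lusternik--Schnirelmann category of its base, so $\secat(e_p\colon \Isom(M)\to M)\le \cat(M)$. Combining this with Theorem \ref{TheoremGCCut} yields
$$\GC(M) \le \secat(e_p)\cdot \GC_p(\OCut_p(M))+1 \le \cat(M)\cdot \GC_p(\OCut_p(M))+1,$$
which is the desired inequality.
\end{proof}
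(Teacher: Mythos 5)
Your proposal is correct and follows essentially the same route as the paper: combine Theorem \ref{TheoremGCCut} with Schwarz's inequality $\secat(p)\leq\cat(B)$ for a fibration $p\colon E\to B$. The only nitpick is the citation: the paper refers to \cite[Theorem 18]{SchwarzGenus} for this fact rather than Theorem 5.
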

\begin{proof}
This is an immediate consequence of Theorem \ref{TheoremGCCut} and the fact that every fibration $p:E \to B$ satisfies $\secat(p) \leq \cat(B)$ by \cite[Theorem 18]{SchwarzGenus}.
\end{proof}

We want to further estimate geodesic complexity from above by finding upper bounds for subspace geodesic complexities of cut loci. In case $\Cut_p(M)$ admits a stratification, we can compare $\GC_p(\Cut_p(M))$ to the subspace geodesic complexities of its strata.

\begin{prop}
\label{PropCutStrat}
Let $(M,g)$ be a complete Riemannian manifold, let $p \in M$ and assume that $\Cut_p(M)$ has a stratification $(S_1,\dots,S_k)$ of depth $k$. Then 
$$\GC_p(\Cut_p(M)) \leq \sum_{i=1}^k \max_{Z_i \in \pi_0(S_i)} \GC_p(Z_i),$$
where $\pi_0(X)$ denotes the set of connected components of a space $X$.
\end{prop}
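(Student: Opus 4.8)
The plan is to cover $\Cut_p(M)$ by the closed strata $\overline{S}_i$ and use the union bound \eqref{EqGCunion}, reducing the problem to estimating $\GC_p(\overline{S}_i)$ for a single stratum, and then to split each stratum into its connected components. Recall from Definition \ref{DefStrat}(i) that $\Cut_p(M) = \bigcup_{i=1}^k S_i$, and since each $S_i$ is locally closed with $\overline{S}_i = \bigcup_{j \geq i} S_j$, the sets $S_1, \dots, S_k$ are themselves pairwise disjoint and $\Cut_p(M) = \bigsqcup_{i=1}^k S_i$. Applying \eqref{EqGCunion} repeatedly gives $\GC_p(\Cut_p(M)) \leq \sum_{i=1}^k \GC_p(S_i)$, so it suffices to show $\GC_p(S_i) \leq \max_{Z_i \in \pi_0(S_i)} \GC_p(Z_i)$ for each $i$.

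The key step is therefore the following claim: if $B \subset M$ is a disjoint union of connected components $\{Z_\lambda\}_{\lambda \in \Lambda}$, each of which is \emph{open and closed in $B$} (which holds automatically when $B = S_i$ is locally closed, since then $S_i$ is locally connected in the subspace sense — more carefully, a locally closed subset of a manifold is an ANR and hence locally connected, so its connected components are open in $S_i$), then $\GC_p(B) \leq \sup_\lambda \GC_p(Z_\lambda)$. To prove this, set $r := \sup_\lambda \GC_p(Z_\lambda)$ (assumed finite, else nothing to prove). For each $\lambda$ choose pairwise disjoint locally compact sets $E_1^\lambda, \dots, E_r^\lambda \subset M$ with $Z_\lambda \subset \bigcup_{m=1}^r E_m^\lambda$ admitting continuous geodesic motion planners $\{p\} \times E_m^\lambda \to GM$. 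Since the $Z_\lambda$ are mutually separated (each is open in $B$, hence there is an open set $O_\lambda \subset M$ with $O_\lambda \cap B = Z_\lambda$), we may first replace $E_m^\lambda$ by $E_m^\lambda \cap O_\lambda$ to arrange that the families for different $\lambda$ live over disjoint opens; then define $E_m := \bigsqcup_\lambda (E_m^\lambda \cap O_\lambda)$ for $m = 1, \dots, r$. The $E_1, \dots, E_r$ are pairwise disjoint, cover $B$, and one checks local compactness: near a point of $E_m^\lambda \cap O_\lambda$ the set $E_m$ agrees with $E_m^\lambda \cap O_\lambda$, which is locally compact. The geodesic motion planners on the pieces glue to a continuous geodesic motion planner on each $E_m$, because the pieces are contained in disjoint open subsets of $M$ (continuity is a local condition on the domain, and $\{p\} \times E_m$ is open-separated into the pieces $\{p\} \times (E_m^\lambda \cap O_\lambda)$).

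The main obstacle I anticipate is the bookkeeping around \emph{local compactness} and the gluing of sections across infinitely many components: one must ensure that the separating open sets $O_\lambda$ can be chosen to be pairwise disjoint (this uses that the $Z_\lambda$ are the connected components of a locally closed, hence locally connected, set, so distinct components are topologically separated), and that local compactness is preserved after intersecting with $O_\lambda$ and taking the disjoint union — here the point is that the resulting set is, locally in $M$, just one of the pieces. A secondary subtlety is the passage from $\overline{S}_i$-type coverings to coverings by the $S_i$ themselves; I chose above to work directly with the disjoint decomposition $\Cut_p(M) = \bigsqcup S_i$, which sidesteps condition (ii) of Definition \ref{DefStrat} entirely and only uses condition (i) together with local closedness, making the argument cleaner. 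Everything else is a routine application of \eqref{EqGCunion} and Proposition \ref{PropVelocityCont}-free manipulations of motion planners.
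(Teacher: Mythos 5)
Your overall strategy is the same as the paper's: split $\Cut_p(M)$ into the strata via \eqref{EqGCunion}, then merge the covers of the components of a single stratum into one cover with $\max$ many pieces. The gap sits exactly at the step you flagged as the main obstacle, and it is twofold. First, the topological fact you invoke is false: a locally closed subset of a manifold is locally compact, but it need not be an ANR, nor locally connected (a convergent sequence together with its limit, or a Cantor set, is closed in $\RR$), so you cannot conclude in general that the components $Z_\lambda$ are open in $S_i$. Second, even where openness holds, your sets $O_\lambda$ only satisfy $O_\lambda\cap S_i=Z_\lambda$; nothing makes them pairwise disjoint, and since your covering sets $E^\lambda_m$ are not contained in $Z_\lambda$, the pieces $E^\lambda_m\cap O_\lambda$ for different $\lambda$ may intersect off $S_i$. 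Hence the assertion that the families for different $\lambda$ ``live over disjoint opens'' --- on which both your continuity and your local compactness checks rest --- is unproved. For an infinite family of components it can fail outright: pairwise separated sets need not admit pairwise disjoint open neighborhoods (take the lines $y=x/n$, $x>0$, together with the positive $x$-axis; this union is locally closed, its components are open in it, yet any open set containing the axis meets infinitely many of the lines). So the $\sup_\lambda$ generalization is not reachable by this route.

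Both defects are repairable in the setting the statement actually intends, namely finitely many components (otherwise the $\max$ need not even exist), and no ANR input is needed: each $Z_\lambda$ is closed in $S_i$, so $\overline{Z_\lambda}\cap S_i=Z_\lambda$ and hence $Z_\lambda\cap\overline{Z_\mu}=\emptyset$ for $\lambda\neq\mu$; with finitely many components each $Z_\lambda$ is then automatically open in $S_i$ as well. The paper's proof bypasses the $O_\lambda$ entirely: it takes the covering pieces \emph{inside} the components, $A^\lambda_\ell\subset Z_\lambda$ (legitimate, since intersecting a locally compact, i.e.\ locally closed, set with the locally closed set $Z_\lambda$ stays locally compact), and sets $A_\ell=\bigcup_\lambda A^\lambda_\ell$; continuity and local compactness of the glued planner at a point of $A^\lambda_\ell$ then follow because that point avoids the closed set $\bigcup_{\mu\neq\lambda}\overline{Z_\mu}$, a finite union, so some ambient open set meets $A_\ell$ only in $A^\lambda_\ell$. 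If you prefer to keep your disjoint-opens picture in the finite case, you must still construct the opens, e.g.\ $O_\lambda=\{x\in M \mid \dd_M(x,Z_\lambda)<\dd_M(x,\bigcup_{\mu\neq\lambda}Z_\mu)\}$, using the separation just stated; as written, your argument does not supply them.
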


\begin{proof}
Since $\Cut_p(M)= S_1 \cup \dots \cup S_k$, it follows from Remark \ref{RemarkGCelem}.(3) that $\GC_p(\Cut_p(M)) \leq \sum_{i=1}^k \GC_p(S_i).$ Now fix $i \in \{1,2,\dots,k\}$ and let $Z_1,\dots,Z_r$ be the connected components of $S_i$. Put $$s_i := \max_{j \in \{1,2,\dots,r\}} \GC_p(Z_j).$$ For each $j \in \{1,2,\dots,r\}$ let $A^j_1,\dots,A^j_{s_i} \subset Z_j$ be pairwise disjoint and locally compact, such that for each $j \in \{1,2,\dots,r\}$ and $\ell \in \{1,2,\dots,s_i\}$ either $A^j_\ell=\emptyset$ or there exists a continuous geodesic motion planner $\sigma_{j,\ell}:\{p\}\times A^j_\ell \to GM$. 
Put $A_\ell := \bigcup_{j=1}^r A_\ell^j$ for each $\ell \in \{1,2,\dots,s_i\}$. Then the $A_\ell$ are pairwise disjoint and locally compact with $S_i = \bigcup_{\ell=1}^{s_i} A_\ell$. Moreover, since by definition of a stratification, $Z_i \cap \overline{Z}_j=\emptyset$ for all $i \neq j$, the maps
$$\sigma_\ell:\{p\}\times A_\ell \to GM, \qquad \sigma_\ell(p,x) = \sigma_{j,\ell}(p,x) \quad \forall x \in A^j_\ell, \ j \in \{1,2,\dots,r\},$$
are well-defined continuous geodesic motion planners. This shows $\GC_p(S_i) \leq s_i$ for each $i \in \{1,2,\dots,k\}$, which implies the claim.
\end{proof}

\begin{cor}
\label{CorUpperStrat}
Let $(M,g)$ be a homogeneous Riemannian manifold, let $p \in M$ and assume that $\Cut_p(M)$ has a stratification $(S_1,\dots,S_k)$ of depth $k$. Then 
$$\GC(M) \leq \secat(\ev_p:\Isom(M)\to M)\cdot  \sum_{i=1}^k \max_{Z_i \in \pi_0(S_i)} \GC_p(Z_i)+1.$$
\end{cor}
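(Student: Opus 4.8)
The plan is to obtain Corollary \ref{CorUpperStrat} as a direct combination of Theorem \ref{TheoremGCCut} with Proposition \ref{PropCutStrat}, using monotonicity of subspace geodesic complexity under inclusion as the connective step. First I would invoke Theorem \ref{TheoremGCCut}: since $(M,g)$ is homogeneous and $p \in M$, we have
$$\GC(M) \leq \secat(e_p:\Isom(M) \to M) \cdot \GC_p(\OCut_p(M)) + 1.$$

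Next I would record the elementary fact that $\GC_M$ is monotone under inclusion: if $A \subseteq B \subseteq M \times M$, then any family $E_1,\dots,E_r$ of pairwise disjoint locally compact subsets covering $B$ and each admitting a continuous geodesic motion planner also covers $A$, so $\GC_M(A) \leq \GC_M(B)$. Applying this to the inclusion $\{p\}\times\OCut_p(M) \subseteq \{p\}\times\Cut_p(M)$, which holds because every ordinary cut point is in particular a cut point, yields $\GC_p(\OCut_p(M)) \leq \GC_p(\Cut_p(M))$. Then, since $\Cut_p(M)$ carries the stratification $(S_1,\dots,S_k)$ of depth $k$ by hypothesis, Proposition \ref{PropCutStrat} gives
$$\GC_p(\Cut_p(M)) \leq \sum_{i=1}^k \max_{Z_i \in \pi_0(S_i)} \GC_p(Z_i).$$
Substituting this chain of inequalities into the bound from Theorem \ref{TheoremGCCut} produces exactly the asserted estimate.

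There is essentially no obstacle here; the only point requiring a moment's care is the mismatch between Theorem \ref{TheoremGCCut}, which is phrased in terms of the \emph{ordinary} cut locus $\OCut_p(M)$, and both the hypothesis of the corollary and Proposition \ref{PropCutStrat}, which concern the full cut locus $\Cut_p(M)$. This gap is bridged precisely by the monotonicity observation together with the inclusion $\OCut_p(M) \subseteq \Cut_p(M)$, after which the argument is a routine substitution. (If desired, one could instead cut out the intermediate step and observe that $\GC_p(\OCut_p(M)) \leq \sum_i \max_{Z_i} \GC_p(Z_i)$ directly, by first restricting the cover furnished by Proposition \ref{PropCutStrat} to $\OCut_p(M)$; the two formulations are equivalent.)
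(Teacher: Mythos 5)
Your proposal is correct and follows exactly the paper's argument: combine Theorem \ref{TheoremGCCut} with Proposition \ref{PropCutStrat}, bridged by the inclusion $\OCut_p(M) \subset \Cut_p(M)$ and the resulting monotonicity inequality $\GC_p(\OCut_p(M)) \leq \GC_p(\Cut_p(M))$. Nothing is missing; your only addition is spelling out the (routine) monotonicity of subspace geodesic complexity, which the paper treats as immediate.
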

\begin{proof}
This follows from Theorem \ref{TheoremGCCut} and Proposition \ref{PropCutStrat}.
\end{proof}

\section{Trivially covered stratifications}  
\label{sec6}

In \cite{RecioMitter}, Recio-Mitter considered total cut loci with stratifications whose strata are finitely covered by the path fibration. As a part of \cite[Corollary 3.14]{RecioMitter}, he showed that if that stratification is inconsistent and trivially covered, this knowledge about the total cut locus suffices to compute the geodesic complexity of the space. 

In this section, we will revisit the notion of trivially covered stratifications in the setting of Riemannian manifolds, but in contrast to \cite{RecioMitter}, we will put a covering condition on the cut locus of a single point instead of the total cut locus. We will then derive an upper bound for the numbers $\GC_p(M)$ that we have studied in the previous section. From this estimate we will derive an upper bound for the geodesic complexity of homogeneous Riemannian manifolds for which the cut locus of a point admits a trivially covered stratification.

\begin{definition}
Let $M$ be a complete Riemannian manifold, let $p \in M$ and let $\Sc= (S_1,\dots,S_N)$ be a stratification of $\Cut_p(M)$. We call $\Sc$ \emph{trivially covered} if for all $k \in \{1,2,\dots,N\}$ and for all connected components $Z$ of $S_k$ the restriction 
$$ \exp_p|_ {\Cuttil_p(M) \cap \exp_p^{-1}(Z)}:  \Cuttil_p(M) \cap \exp_p^{-1}(Z) \to Z$$
is a trivial covering. Here, a trivial covering is understood to be a covering $q: X \to Y$, for which there is a discrete set $D$ and a homeomorphism $f: X \to Y \times D$, such that $q= \mathrm{pr} \circ f$, where $\mathrm{pr}:Y \times D \to Y$ is the projection onto the first factor.
\end{definition}

\begin{theorem}
\label{TheoremTrivCover}
Let $M$ be a complete Riemannian manifold, let $p \in M$ and assume that $\Cut_p(M)$ admits a trivially covered stratification of depth $N \in \NN$. Then
$$\GC_p(\Cut_p(M)) \leq N.$$
\end{theorem}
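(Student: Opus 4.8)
The plan is to combine Proposition \ref{PropCutStrat} with the observation that a stratum whose tangent cut locus is a trivial covering admits a \emph{global} continuous geodesic motion planner, i.e. that $\GC_p(Z) = 1$ for every connected component $Z$ of every stratum $S_k$. Granting this, Proposition \ref{PropCutStrat} immediately gives
$$\GC_p(\Cut_p(M)) \leq \sum_{i=1}^N \max_{Z_i \in \pi_0(S_i)} \GC_p(Z_i) \leq \sum_{i=1}^N 1 = N,$$
so the entire content of the proof is the claim $\GC_p(Z)=1$ for a connected component $Z$ of $S_k$.

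To prove that claim, fix such a $Z$ and write $W := \Cuttil_p(M) \cap \exp_p^{-1}(Z) \subset T_pM$. By hypothesis $\exp_p|_W : W \to Z$ is a trivial covering, so it splits as a disjoint union of homeomorphisms: there are subsets $W_1,\dots,W_m \subset W$ with $W = W_1 \sqcup \dots \sqcup W_m$ (where $m$ is the number of sheets, necessarily constant on the connected $Z$) and each $\exp_p|_{W_\ell} : W_\ell \to Z$ a homeomorphism. Pick any one sheet, say $W_1$, and let $\phi := (\exp_p|_{W_1})^{-1} : Z \to W_1 \subset T_pM$, a continuous map. Now define $s : \{p\} \times Z \to GM$ by
$$s(p,q) := \bigl( t \mapsto \exp_p(t\,\phi(q)) \bigr), \qquad t \in [0,1].$$
Each $s(p,q)$ is a geodesic from $p$ to $\exp_p(\phi(q)) = q$; since $\phi(q) \in \Cuttil_p(M)$ is a tangent cut point, the segment $t \mapsto \exp_p(t\phi(q))$ on $[0,1]$ is \emph{minimal} (the geodesic ceases to be minimal only for $t>1$), so $s(p,q) \in GM$. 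Continuity of $s$ in $q$ follows from continuity of $\phi$ together with the smoothness of $\exp_p$ and the fact that the compact-open topology on paths $[0,1]\to M$ parametrized as $t\mapsto \exp_p(tv)$ depends continuously on $v\in T_pM$ (this is the standard continuous-dependence-on-initial-conditions fact, and it is essentially the content underlying Proposition \ref{PropVelocityCont} read in the other direction). Hence $s$ is a continuous geodesic motion planner on the single set $\{p\}\times Z$, and since $Z$ is locally closed in $M$, hence locally compact, this exhibits a valid one-element decomposition, giving $\GC_p(Z) \leq 1$; as $\GC_p$ of a nonempty set is at least $1$, we get $\GC_p(Z)=1$ (when $Z=\emptyset$ the bound $\GC_p(Z)\le 1$ is trivial and does not affect the sum).

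The main obstacle, and the only genuinely non-routine point, is verifying that the map $q \mapsto s(p,q)$ is continuous into $GM$ with the compact-open topology and, more subtly, that its image really lies in $GM$ rather than just among geodesic \emph{segments} that might fail to be minimal. The first is handled by the continuity of $\phi$ and of $(v,t)\mapsto\exp_p(tv)$; the second uses that $W_1 \subset \Cuttil_p(M)$ consists of tangent cut points, so that by definition of the cut time $\exp_p(t\phi(q))$, $t\in[0,1]$, realizes $\mathrm{d}_M(p,q)$. One should also note $\phi$ is genuinely continuous: triviality of the covering is exactly what guarantees the inverse of one sheet is defined and continuous on all of $Z$, which is where the hypothesis is used in an essential way — a merely locally trivial (nontrivial) covering would not admit such a global section, and indeed that is precisely the phenomenon that forces $\GC_p$ upward in the inconsistent case of Theorem \ref{TheoremLowerBound}. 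Finally one invokes Proposition \ref{PropCutStrat} to assemble the strata, observing that the resulting $N$ decomposition pieces are pairwise disjoint and locally compact as required, which is already built into the statement of that proposition.
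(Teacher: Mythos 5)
Your proposal is correct and follows essentially the same route as the paper's proof: choose one sheet of the trivial covering over each connected component $Z$ of a stratum, use the continuous inverse $\phi=(\exp_p|_{W_1})^{-1}$ to define the planner $s(p,q)(t)=\exp_p(t\,\phi(q))$, conclude $\GC_p(Z)=1$, and assemble the strata via Proposition \ref{PropCutStrat}. Your added remarks on minimality of the segments (since $\phi(q)$ is a tangent cut point) and on continuity into $GM$ only make explicit what the paper leaves implicit.
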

\begin{proof}
Let $\Sc=(S_1,\dots,S_N)$ be a trivially covered stratification of $\Cut_p(M)$. We want to show that $\{p\} \times S_k$ admits a continuous geodesic motion planner for each $k \in \{1,2,\dots,N\}$. 
For a fixed $k\in \{1,2,\dots,N\}$ let $Z_1,\dots,Z_r$ be the connected components of $S_k$ for suitable $r \in \NN$. For $i \in \{1,2,\dots,r\}$ let $B_i$ be an arbitrary sheet of the trivial covering
$$\exp_p|_ {\Cuttil_p(M) \cap \exp_p^{-1}(Z_i)}:  \Cuttil_p(M) \cap \exp_p^{-1}(Z_i) \to Z_i.$$
Then $\exp_p|_{B_i}:B_i \to Z_i$ is a homeomorphism. With $\varphi_i := (\exp_p|_{B_i})^{-1}:Z_i \to B_i$ one checks without difficulties that 
$$s_i: \{p\} \times Z_i \to GM, \qquad (s_i(p,q))(t)= \exp_p(t\varphi_i^{-1}(q)),$$
is a continuous geodesic motion planner and thus $\GC_p(Z_i)=1$. 
Since $k \in \{1,2,\dots,N\}$ was chosen arbitrarily, the claim follows from Proposition \ref{PropCutStrat}.
\end{proof}

With the additional hypotheses that $M$ is compact and that the stratification in Theorem \ref{TheoremTrivCover} is inconsistent, one can derive an equality from Theorem \ref{TheoremTrivCover}. The following result is analogous to the corresponding part of \cite[Corollary 3.14]{RecioMitter}.

\begin{cor}
\label{CorTrivCov}
Let $M$ be a closed Riemannian manifold, let $p \in M$ and assume that $\Cut_p(M)$ admits a trivially covered inconsistent stratification of depth $N \in \NN$. Then $\GC_p(M) = N+1$.
\end{cor}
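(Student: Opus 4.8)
The plan is to combine the two bounds that have already been established, namely the upper bound from Theorem \ref{TheoremTrivCover} and the lower bound from Theorem \ref{TheoremLowerBound}. First I would invoke Theorem \ref{TheoremTrivCover}: since $\Cut_p(M)$ admits a trivially covered stratification of depth $N$, we immediately get $\GC_p(\Cut_p(M)) \leq N$. This gives the upper half of the desired equality once we relate $\GC_p(M)$ (i.e.\ $\GC_p(M \times \{p\})$ or rather $\GC$ of the slice) to $\GC_p(\Cut_p(M))$.

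Next I would establish the matching lower bound. The stratification is assumed to be inconsistent as well, so Theorem \ref{TheoremLowerBound} applies directly and yields $\GC_p(\Cut_p(M)) \geq N+1$. Wait --- that already exceeds the upper bound $N$, so I need to be careful about which subspace geodesic complexity each inequality refers to. The point is that $\GC_p(M) = \GC_M(\{p\} \times M)$ is governed by two contributions: the complement of the cut locus, where a single continuous geodesic motion planner exists (by the uniqueness of minimal geodesics off the cut locus, cf.\ the discussion in the introduction and Remark \ref{RemarkOCut}), and the cut locus itself. So the clean statement is $\GC_p(M) \leq \GC_p(M \setminus \Cut_p(M)) + \GC_p(\Cut_p(M)) \leq 1 + N$ using \eqref{EqGCunion} and Theorem \ref{TheoremTrivCover}, giving $\GC_p(M) \leq N+1$. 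For the reverse inequality, Theorem \ref{TheoremLowerBound} gives $\GC(M) \geq \GC_p(\Cut_p(M)) \geq N+1$, and since $\GC_p(\Cut_p(M)) \leq \GC_p(M)$ trivially (a decomposition witnessing $\GC_p(M)$ restricts to one covering $\{p\}\times\Cut_p(M)$), we obtain $\GC_p(M) \geq N+1$ as well. Combining the two gives $\GC_p(M) = N+1$.

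Concretely the proof would read roughly: \emph{By Theorem \ref{TheoremTrivCover}, $\GC_p(\Cut_p(M)) \leq N$. Since there is a continuous geodesic motion planner on $(\{p\} \times M) \setminus (\{p\}\times\Cut_p(M))$ — the unique minimal geodesic depending continuously on the endpoint off the cut locus — inequality \eqref{EqGCunion} gives $\GC_p(M) \leq 1 + N$. Conversely, the stratification being inconsistent, Theorem \ref{TheoremLowerBound} yields $\GC_p(\Cut_p(M)) \geq N+1$, and since $\{p\}\times\Cut_p(M) \subset \{p\}\times M$ we get $\GC_p(M) \geq \GC_p(\Cut_p(M)) \geq N+1$. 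Hence $\GC_p(M) = N+1$.}

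The only genuinely delicate point — and the one I would spell out most carefully — is the claim that a continuous geodesic motion planner exists on the slice minus the cut locus, i.e.\ that $\GC_p(M \setminus \Cut_p(M)) = 1$; this requires that $\{p\} \times (M \setminus \Cut_p(M))$ is locally compact in $M \times M$ (it is, being open in the locally compact space $\{p\}\times M$) and that the assignment $q \mapsto$ (unique minimal geodesic from $p$ to $q$) is continuous there, which follows from the diffeomorphism property of $\exp_p$ on the domain of injectivity together with Proposition \ref{PropVelocityCont}-type reasoning, or simply from the fact cited in the introduction and in Remark \ref{RemarkOCut}.(2). Everything else is bookkeeping with \eqref{EqGCunion} and monotonicity of $\GC_p$ under inclusion.
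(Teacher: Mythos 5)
Your strategy is the same as the paper's: the upper bound $\GC_p(M)\leq \GC_p(\Cut_p(M))+1\leq N+1$ via Theorem \ref{TheoremTrivCover} together with the continuous geodesic motion planner on $\{p\}\times(M\setminus\Cut_p(M))$ and \eqref{EqGCunion}, and the lower bound via the inconsistency of the stratification through Theorem \ref{TheoremLowerBound}. The upper-bound half of your argument is exactly the paper's and is fine.

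The lower-bound half, however, has a genuine problem which you noticed (``that already exceeds the upper bound'') but then did not resolve: as written, your proof asserts both $\GC_p(\Cut_p(M))\leq N$ (from Theorem \ref{TheoremTrivCover}) and $\GC_p(\Cut_p(M))\geq N+1$ (quoting the middle term of Theorem \ref{TheoremLowerBound}), and these flatly contradict each other for a trivially covered inconsistent stratification (the flat tori of Section \ref{sec6} show this situation really occurs). So your chain $\GC_p(M)\geq \GC_p(\Cut_p(M))\geq N+1$ cannot be taken at face value: under the hypotheses of the corollary the middle quantity is at most $N$, so the route through $\GC_p(\Cut_p(M))$ breaks down. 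What the inconsistency argument actually delivers, and what the paper's proof of the corollary uses, is the lower bound on $\GC_p(M)$ itself: in the induction proving Theorem \ref{TheoremLowerBound}, the base case applies Proposition \ref{PropContMPCut} after finding an open set $U\subset M$ with $U\subset E_i$, which requires the locally compact pieces to cover a full neighborhood in $M$ of each cut point --- something a decomposition covering only $\{p\}\times\Cut_p(M)$ need not provide, but which a decomposition witnessing $\GC_p(M)$ does. So you should invoke the lower bound in the form $\GC_p(M)\geq N+1$ (bypassing $\GC_p(\Cut_p(M))$ entirely); with that repair your proof coincides with the paper's.
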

\begin{proof}
By restricting the motion planner from Remark \ref{RemarkOCut}.(2), one obtains a continuous geodesic motion planner on $\{p\} \times (M \setminus \Cut_p(M))$. It follows from Theorem \ref{TheoremTrivCover} that
$$\GC_p(M) \leq \GC_p(\Cut_p(M))+1 \leq N+1.$$
But by Theorem \ref{TheoremLowerBound}, it also holds that $\GC_p(M) \geq N+1$, which proves the equality.
\end{proof}

\begin{cor}
\label{CorLieGroupTrivCov}
Let $G$ be a compact connected Lie group equipped with a left-invariant Riemannian metric and let $e \in G$ denote the unit element. If $\Cut_e(G)$ admits a trivially covered inconsistent stratification of depth $N$, then 
$$\GC(G)=N+1.$$
\end{cor}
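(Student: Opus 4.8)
The plan is to deduce Corollary \ref{CorLieGroupTrivCov} directly from the two results that immediately precede it, namely Corollary \ref{CorLieGroup} and Corollary \ref{CorTrivCov}. The key observation, already exploited in the proof of Corollary \ref{CorLieGroup}, is that a compact connected Lie group $G$ with a left-invariant metric is in particular a homogeneous Riemannian manifold, and that the left translations $\ell_g$ furnish a global continuous section $g \mapsto \ell_g$ of the evaluation bundle $e_1 \colon \Isom(G) \to G$, so that $\secat(e_1) = 1$.

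First I would invoke Corollary \ref{CorTrivCov}: since $G$ is closed and $\Cut_1(G)$ admits a trivially covered inconsistent stratification of depth $N$, we obtain $\GC_1(G) = N+1$, and in particular $\GC_1(\Cut_1(G)) = N$ by the proof of that corollary (equivalently, by Theorem \ref{TheoremTrivCover} combined with Theorem \ref{TheoremLowerBound} applied to the subspace complexity of the cut locus). Since $\OCut_1(G) \subset \Cut_1(G)$, monotonicity of subspace geodesic complexity gives $\GC_1(\OCut_1(G)) \leq \GC_1(\Cut_1(G)) = N$.

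Next I would feed this into Corollary \ref{CorLieGroup}, which states $\GC(G) \leq \GC_1(\OCut_1(G)) + 1$. Combining the two inequalities yields the upper bound $\GC(G) \leq N+1$. For the matching lower bound, Theorem \ref{TheoremLowerBound} applies because $G$ is closed and $\Cut_1(G)$ admits an inconsistent stratification of depth $N$ (the trivially covered stratification is in particular inconsistent by hypothesis), giving $\GC(G) \geq \GC_1(\Cut_1(G)) \geq N+1$. Hence $\GC(G) = N+1$.

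There is essentially no obstacle here: the statement is a clean specialization of the preceding two corollaries, and the only thing to check is that all hypotheses transfer — compactness of $G$ for the closedness assumptions, left-invariance for $\secat(e_1) = 1$, and the fact that "trivially covered inconsistent" entails both "trivially covered" (for the upper bound via Theorem \ref{TheoremTrivCover}) and "inconsistent" (for the lower bound via Theorem \ref{TheoremLowerBound}). A short proof simply citing Corollary \ref{CorLieGroup} and Corollary \ref{CorTrivCov} suffices.

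\begin{proof}
Since $G$ is a compact connected Lie group with a left-invariant metric, it is a closed homogeneous Riemannian manifold, and as noted in the proof of Corollary \ref{CorLieGroup} the map $s \colon G \to \Isom(G)$, $s(g) = \ell_g$, is a continuous section of $e_1 \colon \Isom(G) \to G$, so $\secat(e_1) = 1$. By hypothesis $\Cut_1(G)$ admits a trivially covered inconsistent stratification of depth $N$; in particular it admits a trivially covered stratification of depth $N$, so Theorem \ref{TheoremTrivCover} gives $\GC_1(\Cut_1(G)) \leq N$, and since $\OCut_1(G) \subset \Cut_1(G)$ we obtain $\GC_1(\OCut_1(G)) \leq N$. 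By Corollary \ref{CorLieGroup} it follows that
$$\GC(G) \leq \GC_1(\OCut_1(G)) + 1 \leq N+1.$$
On the other hand, the stratification is also inconsistent of depth $N$, so Theorem \ref{TheoremLowerBound} yields $\GC(G) \geq \GC_1(\Cut_1(G)) \geq N+1$. Combining both inequalities gives $\GC(G) = N+1$.
\end{proof}
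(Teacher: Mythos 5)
Your argument follows essentially the same route as the paper: the upper bound comes from Theorem \ref{TheoremTrivCover} (via $\GC_1(\OCut_1(G))\leq \GC_1(\Cut_1(G))\leq N$) fed into Corollary \ref{CorLieGroup}, and the lower bound from Theorem \ref{TheoremLowerBound}, so the proof is correct in substance. One caution: in the last step you quote the intermediate inequality $\GC(G)\geq \GC_1(\Cut_1(G))\geq N+1$ from Theorem \ref{TheoremLowerBound}, which directly contradicts the bound $\GC_1(\Cut_1(G))\leq N$ you established two sentences earlier -- for a trivially covered inconsistent stratification these two statements cannot both hold, and what the induction in Theorem \ref{TheoremLowerBound} actually supports (and what the paper cites) is only the outer inequality $\GC(G)\geq N+1$, since its argument needs the locally compact pieces to cover a full neighborhood of each cut point, not merely the cut locus. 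So drop the middle term and cite the lower bound as $\GC(G)\geq N+1$; with that adjustment your proof coincides with the paper's.
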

\begin{proof}
Combining Theorem \ref{TheoremTrivCover} with Corollary \ref{CorLieGroup} yields
$$\GC(G) \leq \GC_e(\Cut_e(G))+1 \leq N+1.$$
But by Theorem \ref{TheoremLowerBound}, $\GC(G) \geq N+1$ as well, so the claim follows.
\end{proof}

\section{Examples: flat tori and Berger spheres}	
\label{sec7}

We want to use the results of Sections \ref{sec5} and \ref{sec6} to compute the geodesic complexities of two classes of examples: two-dimensional flat tori and three-dimensional Berger spheres. The cut loci of points in such spaces are well-understood and admit stratifications of a well-behaved kind.

\subsection{Geodesic complexity of flat tori} \label{sec7-1}
Recio-Mitter has computed the geodesic complexity of a standard flat $n$-dimensional torus in \cite[Theorem 4.4]{RecioMitter}. More precisely, he has shown that the standard flat metric $g_f$ on the $n$-torus $T^n$ satisfies $\GC(T^n,g_f)=n+1$ for each $n \in \NN$.

In the course of this subsection, we will extend the two-dimensional case of Recio-Mitter's result to \emph{arbitrary} flat metrics on two-dimensional tori. The cut loci of such metrics are well-understood. 

Before we do so, we will re-obtain Recio-Mitter's computation for standard flat tori using the methods of this article. This example is particularly instructive and illustrates the use of inconsistent stratifications. Moreover, in contrast to \cite[Theorem 4.4]{RecioMitter}, we only need to consider the cut locus of a single point, while in the proof of \cite[Theorem 4.4]{RecioMitter} a stratification of $T^n\times T^n$ is required and the structure of the space of geodesic paths in $T^n$ needs to be examined.

\begin{example}
\label{ExampleFlatTorus}
Let $n \in \NN$ and consider the $n$-torus $T^n$ with the standard flat metric $g_f$, i.e. the quotient metric induced by the standard metric on $\RR^n$ and by identifying $T^n = \RR^n/(2\ZZ)^n$. Equivalently, $T^n$ is obtained from $\RR^n$ by collapsing the lattice defined by an arbitrary family of $n$ pairwise orthogonal vectors of length two). Let $\pi:\RR^n \to T^n$ be the projection and put $o:= \pi(0)$ and $M:= (T^n,g_f)$. We identify $\RR^n$ with $T_oM$ in the obvious way. 

Note that $T^n$ is isometric to the Riemannian product $(\RR/(2\ZZ))^n$.
For $N:=\RR/(2\ZZ)$ let $\mathrm{pr}:\RR\to N$ be the obvious Riemannian covering and put $p_0 := \mathrm{pr}(0)\in N$. Then $\Cut_{p_0}(N)=\{\mathrm{pr}(1)\}$ and the tangent cut locus is given by $$\Cuttil_{p_0}(N) = \{-1,1\}$$ under the identification $T_{p_0}N \cong \RR$.

Given the Riemannian product of two Riemannian manifolds $(M_1,g_1)$ and $(M_2,g_2)$ the cut locus of a point $(p_1,p_2)\in M_1\times M_2$ is easily seen to be
$$  \Cut_{(p_1,p_2)}(M_1\times M_2)  = \left(\Cut_{p_1}(M_1)\times M_2\right) \cup \left(M_1 \times \Cut_{p_2}(M_2)\right) ,  $$
see \cite[p. 328]{Crittenden}. For $i\in \{1,2\}$ let $K_i$ be the union of the injectivity domain in $T_{p_i}M_i$ with the tangent cut locus $\Cuttil_{p_i}(M_i)$. Similar to the cut locus, the tangent cut locus of $(p_1,p_2)$ is given by
 $$   \Cuttil_{(p_1,p_2)}  (M_1\times M_2)  =  \left(\Cuttil_{p_1}(M_1)\times K_2\right) \cup \left(K_1 \times \Cuttil_{p_2}(M_2)\right) $$
 under the identification $T_{(p_1,p_2)}(M_1\times M_2) \cong T_{p_1}M_1 \times T_{p_2}M_2$.
 For products of finitely many manifolds, one iteratively derives analogous results for cut loci and tangent cut loci. 

 We conclude that if $I^n:= [-1,1]^n$, then the tangent cut locus of $o$ in $M=(T^n,g_f)$ is
$$\Cuttil_o(M) = \partial I^n.$$
See also \cite[p. 107]{GHL} for the case $n=2$.
 The boundary $\partial I^n$ admits a stratification $\partial I^n = \bigcup_{k=1}^n \mathcal{A}_k$ of depth $n$, given as follows: 
For each $k \in \{1,2,\dots,n\}$ we put $$J_k := \{(i_1,\dots,i_k) \in \NN^k \ | \ 1 \leq i_1 < \dots < i_k \leq n\}.$$ Then each $\mathcal{A}_k$ is given as the disjoint union $\mathcal{A}_k = \bigcup_{(i_1,\dots,i_k) \in J_k}A_{i_1,\dots,i_k}$, where
$$A_{i_1,\dots,i_k} := \left\{(x_1,\dots,x_n) \in I^n \ \middle| \ |x_{\ell}|=1 \text{ if } \ell \in  \{i_1,i_2,\dots,i_k\}, \ |x_\ell|<1 \text{ if } \ell \notin \{i_1,\dots,i_k\}\right\}.$$

For $(i_1,\dots,i_k) \in J_k$ and $j_1,\dots,j_k \in \{-1,1\}$ we put 
$$A_{i_1,\dots,i_k,j_1,\dots,j_k}= \left\{(x_1,\dots,x_n) \in \RR^n \ \middle| \ x_{i_1}=j_1,\dots,x_{i_k}=j_k, \  |x_\ell| <1 \text{ if } \ell \notin \{i_1,\dots,i_k\} \right\}.$$
Then the sets $A_{i_1,\dots,i_k,j_1,\dots,j_k}$, where $j_1,\dots,j_k \in \{-1,1\}$, are precisely the connected components of $A_{i_1,\dots,i_k}$.


Put $\mathcal{B}_k := \exp_o(\mathcal{A}_k)$. We claim that $(\mathcal{B}_1,\dots,\mathcal{B}_n)$ is a trivially covered stratification of $\Cut_o(M)$.
One checks that the connected components of each of the $\mathcal{B}_k$ are precisely the sets
$$ B_{i_1,\dots,i_k} := \exp_o(A_{i_1,\dots,i_k})\qquad  \text{ where } (i_1,\dots,i_k) \in J_k.$$
Moreover, for all  $(i_1,\dots,i_k) \in J_k$ and all $j_1,\dots,j_k \in \{-1,1\}$ the restriction 
$$\exp_o|_{A_{i_1,\dots,i_k,j_1,\dots,j_k}}:A_{i_1,\dots,i_k,j_1,\dots,j_k} \to B_{i_1,\dots,i_k}$$
is a homeomorphism.
From the explicit description of the $A_{i_1,\dots,i_k}$ one derives that $(\mathcal{B}_1,\dots,\mathcal{B}_n)$ is a stratification. It further follows from the above observations that $$\exp_o|_{A_{i_1,\dots,i_k}}:A_{i_1,\dots,i_k}\to B_{i_1,\dots,i_k}$$ is a trivial covering map for all $(i_1,\dots,i_k) \in J_k$. Since $k \in \{1,2,\dots,n\}$ was chosen arbitrarily, this shows that $(\mathcal{B}_1,\dots,\mathcal{B}_n)$ is trivially covered.

We now want to prove that $(\mathcal{B}_1,\dots,\mathcal{B}_n)$ is indeed an inconsistent stratification of $\Cut_o(M)$. For this purpose, let $k \in \{2,3,\dots,n\}$, $(i_1,\dots,i_k) \in J_k$ and $x \in B_{i_1,\dots,i_k}$.  We assume w.l.o.g. that $(i_1,i_2,\dots,i_k)=(1,2,\dots,k)$. Then there are $y_1,\dots,y_{n-k}\in (-1,1)$, such that $$x= \exp_o(1,1,\dots,1,y_1,\dots,y_{n-k}).$$ It further holds that  
\begin{equation}
\label{EqexpKx}
 \exp_K^{-1}(\{x\}) = \left\{(j_1,\dots,j_k,y_1,\dots,y_{n-k}) \in T_oM \ \middle| \ j_1,\dots,j_k \in \{-1,1\} \right\},
 \end{equation}
where $K:= I^n$ and $\exp_K:= \exp_o|_K:K \to M$, which is a special case of the map defined in \eqref{EqexpK}. Let $i \in \{1,2,\dots,k\}$ and let $\Bhat_i := B_{1,\dots,{i-1},{i+1},\dots,k} \subset \mathcal{B}_{k-1}$. Given $\varepsilon>0$ put 
$$U_\varepsilon:= \exp_o \Big((1-\varepsilon,1+\varepsilon)^k \times \prod_{j=1}^{n-k} (y_j-\varepsilon,y_j+\varepsilon)\Big) \subset M.$$
Then $U_\varepsilon$ is an open neighborhood of $x$ and for sufficiently small $\varepsilon>0$, it holds that $\Bhat_i \cap U_\varepsilon$ has two components $C^+_{i}$ and $C^-_{i}$. With $I_+:= (1-\varepsilon,1)$ and $I_-:=(-1,-1+\varepsilon)$, we put  for all $j_1,\dots,j_{i-1},j_{i+1},\dots,j_k \in \{-1,1\}$: 
\begin{align*}
A^\pm_{j_1,\dots,j_{i-1},j_{i+1},\dots,j_k } &:= \Big\{(j_1,\dots,j_{i-1},t,j_{i+1},\dots,j_k,q) \ \Big| \ t \in I_\pm, \ q \in \prod_{\ell=1}^{n-k} (y_\ell-\varepsilon,y_\ell+\varepsilon) \Big\}. 
\end{align*}
 The two components $C^+_i$ and $C^-_i$ then satisfy 
\begin{align*}
\exp_K^{-1}(C^\pm_i)&= \bigcup_{j_1,\dots,j_{i-1},j_{i+1},\dots,j_k \in \{-1,1\}} A^\pm_{j_1,\dots,j_{i-1},j_{i+1},\dots,j_k}. 
\end{align*}
Combining this observation with \eqref{EqexpKx} yields
\begin{align*}
&\Cuttil_o(M) \cap \exp_o^{-1}(\{x\}) \cap \overline{\exp_K^{-1}(C^\pm_i)}\\ &= \left\{(j_1,\dots,j_{i-1},\pm1,j_{i+1},\dots,j_k,y_1,\dots,y_{n-k}) \ \middle| \ j_1,\dots,j_{i-1},j_{i+1},\dots,j_k \in \{-1,1\} \right\},
\end{align*}
In particular, $\exp_o^{-1}(\{x\}) \cap \overline{\exp_K^{-1}(C^+_i)} \cap \overline{\exp_K^{-1}(C^-_i)}=\emptyset$, implying that $(\mathcal{B}_1,\dots,\mathcal{B}_n)$ satisfies the inconsistency condition at $x$. Since $x\in \Cut_o(M)$ was chosen arbitrarily, this shows that $(\mathcal{B}_1,\dots,\mathcal{B}_n)$ is inconsistent. 
Note that in general $\mathcal{B}_{k-1}\cap U_\varepsilon$ has more connected components than $C_i^+$ and $C_i^-$, but considering these two components is sufficient for proving the inconsistency condition.

Since $T^n$ is a Lie group and $g_f$ is left-invariant, it follows from Corollary \ref{CorLieGroupTrivCov} that
$$\GC(T^n,g_f)=n+1.$$
\end{example}

Next we will compute the geodesic complexity of arbitrary two-dimensional flat tori. The reader should note that in general, the geodesic complexity of $(T^2,g)$ will vary with the metric $g$, see Example \ref{ExampleGC}.(2). For arbitrary flat tori of higher dimensions, the cut loci of points are not as well-understood as in the two-dimensional case. While it might be possible to extend our result to flat tori of higher dimensions, we are not aware of any systematic study of cut loci of flat higher-dimensional tori in the literature.

For $p,q \in \RR^2$ we let $[p,q]=\{(1-t)p+t q \in \RR^2 \ | \ t \in [0,1]\}$ denote the line segment from $p$ to $q$.
\begin{theorem}
Let $g$ be an arbitrary flat metric on $T^2$. Then $\GC(T^2,g)=3$.
\end{theorem}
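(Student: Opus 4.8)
The plan is to realise $(T^{2},g)$ as a quotient $\RR^{2}/\Lambda$ of Euclidean $\RR^{2}$ by a rank-two lattice $\Lambda$ of translations --- the Riemannian universal cover of a flat torus is isometric to Euclidean $\RR^{2}$ and the deck group acts by translations --- and then to apply Corollary \ref{CorLieGroupTrivCov} to the compact connected abelian Lie group $\RR^{2}/\Lambda$ equipped with the left-invariant metric $g$. Writing $\pi\colon\RR^{2}\to T^{2}$, $o:=\pi(0)$, and identifying $\RR^{2}\cong T_{o}M$, the tangent cut locus $\Cuttil_{o}(M)$ is the boundary $\partial D$ of the Dirichlet--Voronoi domain $D=\{x\in\RR^{2}\mid |x|\le |x-\lambda|\ \text{for all }\lambda\in\Lambda\}$, and $\Cut_{o}(M)=\exp_{o}(\partial D)$. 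Since a planar lattice Voronoi cell is a centrally symmetric convex polygon with four or six sides, $\Cut_{o}(M)$ is a finite graph: a wedge of two circles with a single vertex when $D$ is a rectangle, and a theta graph with two vertices when $D$ is a hexagon; in either case the vertices are the images of the polygon vertices and are cut points of $o$ of order at least three, while the open edges consist of order-two cut points.

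I would then take the evident stratification $\Sc=(S_{1},S_{2})$ of depth $N=2$, with $S_{2}$ the finite set of these vertices and $S_{1}=\Cut_{o}(M)\setminus S_{2}$ the disjoint union of the open edges; the axioms of Definition \ref{DefStrat} are immediate. To see that $\Sc$ is trivially covered, note that over the interior of each edge the tangent cut locus is the union of the two sides of $\partial D$ that $\exp_{o}$ carries onto that edge, each mapped homeomorphically, so the restriction is a trivial two-sheeted covering; over a vertex it is a trivial covering of a point. This is essentially the only place where the two combinatorial types of $D$ are distinguished, and it reduces to tracking the side- and vertex-identifications of $\partial D$ under $\pi$.

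Next I would verify that $\Sc$ is inconsistent in the sense of Definition \ref{DefInconsistent}. Fix $x\in S_{2}$ and let $u_{1},\dots,u_{m}\in\Cuttil_{o}(M)$, $m\ge 3$, be the tangent cut points over $x$, i.e. the vertices of $D$ mapping to $x$. For a sufficiently small neighbourhood $U$ of $x$, the connected components $Z_{1},\dots,Z_{s}$ of $U\cap S_{1}$ are exactly the germs at $x$ of the edges of $\Cut_{o}(M)$ incident to $x$ (each $u_{j}$ contributes the germs of its two incident polygon edges, and these get identified in pairs by $\pi$), so $x\in\overline{Z}_{j}$ for every $j$ and $s\ge 3$. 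The key point is that each $u_{j}$, being a vertex of the convex polygon $D$, lies on exactly two edges of $D$, whence $u_{j}\in\overline{\exp_{K}^{-1}(Z_{\ell})}$ for at most two indices $\ell$; since $s\ge 3$, no $u_{j}$ lies in all of the sets $\overline{\exp_{K}^{-1}(Z_{\ell})}$, so
$$\Cuttil_{o}(M)\cap\exp_{o}^{-1}(\{x\})\cap\bigcap_{\ell=1}^{s}\overline{\exp_{K}^{-1}(Z_{\ell})}=\emptyset .$$
As $x\in S_{2}$ was arbitrary, $\Sc$ is a trivially covered inconsistent stratification of $\Cut_{o}(M)$ of depth $2$, and Corollary \ref{CorLieGroupTrivCov} yields $\GC(T^{2},g)=N+1=3$.

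I expect the main difficulty to lie in the combinatorial bookkeeping of the hexagonal case --- correctly matching up which sides and vertices of $D$ are identified, so that the two-sheeted coverings and the local picture of $\Cut_{o}(M)$ near a vertex are justified --- together with the rectangular case, in which the unique vertex of $\Cut_{o}(M)$ is a \emph{degenerate} cut point of order four; there Corollary \ref{CorLowerOrder} is unavailable, so the inconsistency condition really has to be checked by hand, and the "a vertex of a convex polygon meets exactly two of its edges" observation above is what lets one treat both combinatorial types in one stroke.
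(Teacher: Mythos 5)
Your proposal is correct and follows essentially the same route as the paper: realize $(T^2,g)$ as $\RR^2/\Lambda$, describe $\Cuttil_o$ as the boundary of the Voronoi polygon, take the depth-two stratification by vertices and open edges, check that it is trivially covered and inconsistent, and conclude via Corollary \ref{CorLieGroupTrivCov}. The only difference is cosmetic: the paper treats the rectangular case by the $n=2$ case of Example \ref{ExampleFlatTorus} and the hexagonal case by explicit labelling of sides and vertices, whereas you handle both combinatorial types at once with the observation that each polygon vertex lies on exactly two sides while at least three edge-germs of $\Cut_o(M)$ meet at each image vertex.
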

\begin{proof}
By elementary Riemannian geometry, $(T^2,g)$ is isometric to $T^2$ with a quotient metric induced by the standard metric on $\RR^2$ and a projection $\pi:\RR^2\to \RR^2/\Gamma=T^2$, where $\Gamma\subset \RR^2$ is a lattice. We thus assume that $g$ itself is such a quotient metric. 
Put $o := \pi(0,0)$. We are going to describe $\Cuttil_o(T^2)$ following \cite[p.108]{GHL}. The case that $\Gamma$ is generated by two orthogonal vectors is covered in Example \ref{ExampleFlatTorus}, so we assume in the following that $\Gamma$ is generated by two vectors $a_1,a_2 \in \RR^2$, such that the angle between $a_1$ and $a_2$ is acute.

If we identify $T_oM$ with $\RR^2$, then $\Cuttil_o(M)$ is given by a hexagon whose construction we will describe next. Consider the perpendicular bisectors of the following line segments:
$$[0,a_1], \quad [0,a_2], \quad [0,-a_1], \quad [0,-a_2], \quad [0,a_1-a_2], \quad [0,a_2-a_1].$$
These perpendicular bisectors enclose a hexagon in $\RR^2$, see Figure \ref{FigureFlatTorus}. The tangent cut locus $\Cuttil_o(M)$ consists of the boundary curve of the hexagon, while the domain of injectivity of $\exp_o$ is given by the interior of the hexagon. Let the segments and the corner points of the hexagon be labelled as in Figure 1. Then there are $p,q \in M$ with $p \neq q$, such that $p=\exp_o(p_1)=\exp_o(p_2)=\exp_o(p_3)$ and $q=\exp_o(q_1)=\exp_o(q_2)=\exp_o(q_3)$. 
\begin{figure}[h]
\centering
\includegraphics[scale=0.6]{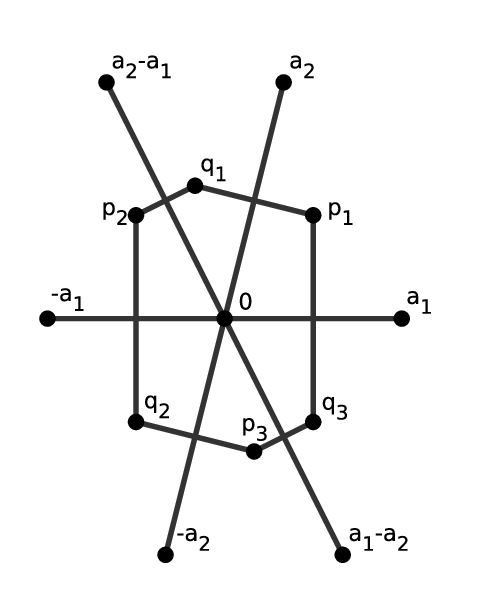}
\caption{Tangent cut loci of flat two-dimensional tori}
\label{FigureFlatTorus}
\end{figure}

For $x,y \in \RR^2$ we put $\llbracket x,y\rrbracket := [x,y] \setminus \{x,y\}$. 
With $p$ and $q$ as above, the set $\Cut_o(M) \setminus \{p,q\}$ has three connected components:
\begin{align*}
&A_1 := \exp_o(\llbracket p_1,q_1 \rrbracket) = \exp_o(\llbracket q_2,p_3 \rrbracket),  \quad A_2:= \exp_o(\llbracket q_1,p_2 \rrbracket)=\exp_o(\llbracket p_3,q_3 \rrbracket), \\
&A_3 := \exp_o(\llbracket p_2,q_2 \rrbracket)=\exp_o(\llbracket q_3,p_1 \rrbracket).
\end{align*}
 More precisely, $\exp_o$ maps both $\llbracket p_1,q_1 \rrbracket$ and $\llbracket q_2,p_3 \rrbracket$ homeomorphically onto $A_1$, both $\llbracket q_1,p_2 \rrbracket$ and $\llbracket p_3,q_3 \rrbracket$ homeomorphically onto $A_2$ and both $\llbracket p_2,q_2 \rrbracket$ and $\llbracket q_3,p_1 \rrbracket$ homeomorphically onto $A_3$.

Let $S_2:= \{p,q\}$ and $S_1:= A_1 \cup A_2 \cup A_3$. By construction, $(S_1,S_2)$ is a trivially covered stratification of $\Cut_o(T_2,g)$. We want to show that $(S_1,S_2)$ is inconsistent as well. Let $K\subset T_oT^2$ denote the union of $\Cuttil_o(T^2,g)$ with the domain of injectivity of $\exp_o$ and let $\exp_K:K \to T^2$ be the restriction of $\exp_o$ to $K$. This is again a special case of the map defined in \eqref{EqexpK}. Let $U\subset T^2$ be an open neighborhood of $p$ and put $Z_i := A_i \cap U$ for all $i \in \{1,2,3\}$. If $U$ is chosen sufficiently small, then by the above description of $\Cuttil_o(T^2,g)$, there are $x_1 \in \llbracket p_1,q_1\rrbracket$ and $x_1' \in \llbracket q_2,p_3\rrbracket$, such that 
$$\exp_{K}^{-1}(Z_1) = \llbracket x_1',p_3\rrbracket \cup \llbracket p_1,x_1\rrbracket .$$
Analogously, one shows that there are 
$$x_2 \in \llbracket q_1,p_2\rrbracket, \quad x_2' \in \llbracket p_3,q_3\rrbracket, \quad x_3 \in \llbracket p_2,q_2\rrbracket \quad \text{and} \quad x_3' \in \llbracket q_3,p_1\rrbracket,$$ such that
$$\exp_K^{-1}(Z_2) = \llbracket x_2,p_2\rrbracket \cup \llbracket p_3,x_2'\rrbracket, \qquad \exp_K^{-1}(Z_3)= \llbracket x_3',p_1\rrbracket \cup \llbracket p_2,x_3 \rrbracket. $$
Since $\exp_K^{-1}(\{p\})=\{p_1,p_2,p_3\}$, this shows that 
\begin{align*}
&\exp_K^{-1}(\{p\})\cap \overline{\exp_K^{-1}(Z_1)}= \{p_2,p_3\}, \qquad \exp_K^{-1}(\{p\})\cap \overline{\exp_K^{-1}(Z_2)}=\{p_1,p_3\}, \\
&\exp_K^{-1}(\{p\})\cap \overline{\exp_K^{-1}(Z_3)}= \{p_1,p_2\}.
\end{align*}
Consequently, 
$$\Cuttil_o(T^2) \cap \exp_o^{-1}(\{p\}) \cap \bigcap_{i=1}^3 \overline{\exp_K^{-1}(Z_i)} =\emptyset,$$
 which shows that $(S_1,S_2)$ satisfies the inconsistency condition at $p$. In complete analogy, one shows that the condition is satisfied at $q$ as well, implying that $(S_1,S_2)$ is inconsistent. Since $g$ is by construction left-invariant, it follows from Corollary \ref{CorLieGroupTrivCov} that $\GC(T^2,g)=3$.
\end{proof}

\subsection{Geodesic complexity of Berger spheres} \label{sec7-2}
In this subsection we consider a class of homogeneous Riemannian manifolds whose geodesic complexity can be computed explicitly without making use of the upper and lower bounds we previously studied. In \cite{BergerSpheres}, M. Berger has constructed a one-parameter family of homogeneous metrics $g_{\alpha}$,  $0<\alpha\leq\frac{\pi}{2}$, on the three-dimensional sphere $S^3$, whose cut loci have been described by Sakai in \cite{SakaiBerger}. 

In the following, we will first recall a particularly interesting class of homogeneous Riemannian manifolds, namely naturally reductive spaces. Berger spheres are special cases of them and we will outline the construction of Berger's metrics following \cite{SakaiBerger}. 


Given a Lie group $G$, we always let $e \in G$ denote its unit element. 
Let $\gg$ denote the Lie algebra of $G$ and assume that $H$ is a closed subgroup of $G$.
Then the Lie algebra $\hh$ of the Lie group $H$ is a Lie subalgebra of $\gg$.
If there is an $\Ad_H$-invariant subspace $\mm$ of the Lie algebra $\gg$ which is complementary to $\hh$ then there is a bijective correspondence between $\Ad_H$-invariant inner products on $\mm$ and $G$-invariant metrics on the homogeneous space $G/H$.
See \cite[Proposition 11.22.(2)]{ONeill} for details.

\begin{definition}[{\cite[p. 317]{ONeill}}]
Let $G$ be a Lie group with a closed subgroup $H$. Let $\gg$ be the Lie algebra of $G$ and $\hh$ be the Lie algebra of $H$. 
Assume that there is a subspace $\mm\subset \gg$ which is complementary to $\hh$ and such that $\Ad_H(\mm)\subset \mm$, where $\Ad_H$ denotes the adjoint representation of $H$.
Suppose there is an $\Ad_H$-invariant inner product $\langle \cdot,\cdot\rangle$ on $\mm$ such that
$$  \langle [X,Y]_{\mm},Z \rangle = \langle X,[Y,Z]_{\mm}\rangle  $$ for all $X,Y,Z\in \mm$ where the subscript $\mm$ of an element of $\gg$ denotes its projection onto $\mm$.
Then $G/H$ together with the $G$-invariant Riemannian metric corresponding to this inner product is called a \emph{naturally reductive space}.
\end{definition}

\begin{example}
All symmetric spaces are examples of naturally reductive spaces as discussed in \cite[p.317]{ONeill}.
The real Stiefel manifolds $V_k(\RR^n)$ for $n\geq 4$ and $2\leq k \leq n-2$ are examples of naturally reductive spaces which are not symmetric spaces, see \cite[p. 748]{GalQuaint}.
\end{example}

For our purposes, the crucial property of naturally reductive spaces is the observation made in the following proposition. We refer to \cite[Proposition 11.25]{ONeill} for its proof.

\begin{prop}
\label{PropGeodNatRed}
Let $G$ be a Lie group and $H \subset G$ be a closed subgroup. If $M=G/H$ is a naturally reductive space and $\pi:G \to M$ is the projection, then the geodesics starting at $o=\pi(e)$ are precisely the curves of the form $\gamma(t) = \pi(\exp(t\xi))$ for $\xi \in \mm$, where $\exp: \gg \to G$ is the Lie group exponential of $G$.
\end{prop}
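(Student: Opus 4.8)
The plan is to separate the two implications. The converse one (\emph{every} geodesic through $o$ has the stated form) is soft: since $\hh=\ker d\pi_e$ and $\mm$ is complementary to $\hh$, the map $d\pi_e|_{\mm}\colon\mm\to T_oM$ is a linear isomorphism, so each $v\in T_oM$ is $d\pi_e(\xi)$ for a unique $\xi\in\mm$; once we know $\gamma_\xi(t)=\pi(\exp(t\xi))$ is a geodesic with $\gamma_\xi(0)=o$ and $\dot\gamma_\xi(0)=v$, the uniqueness theorem for geodesics forces every geodesic emanating from $o$ to coincide with some $\gamma_\xi$ (and, as each $\gamma_\xi$ is defined for all $t$, these geodesics are automatically complete). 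So the whole content is to show that $\gamma_\xi$ is a geodesic.

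For this I would first reduce to checking the geodesic equation at $t=0$. Writing $\tau_{t_0}\colon M\to M$ for the isometry induced by $\exp(t_0\xi)\in G$, one has $\gamma_\xi(t_0+s)=\tau_{t_0}(\gamma_\xi(s))$; since isometries are affine for the Levi-Civita connection, $(\nabla_{\dot\gamma_\xi}\dot\gamma_\xi)(t_0)=d\tau_{t_0}\big((\nabla_{\dot\gamma_\xi}\dot\gamma_\xi)(0)\big)$, so $(\nabla_{\dot\gamma_\xi}\dot\gamma_\xi)(0)=0$ implies it vanishes for all $t_0$. To work at $o$, for $X\in\gg$ let $\widehat X$ denote the fundamental vector field of the $G$-action, $\widehat X_x=\frac{d}{dt}\big|_{0}(\exp(tX)\cdot x)$. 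Then $\widehat X$ is a Killing field, $\widehat X_o$ corresponds to $X_{\mm}$ under $T_oM\cong\mm$ (in particular $\widehat X_o=0$ for $X\in\hh$ and $\langle\widehat X_o,\widehat Y_o\rangle=\langle X_{\mm},Y_{\mm}\rangle$), and $\dot\gamma_\xi(0)=\widehat\xi_o$. Hence it suffices to prove the formula
$$2\,\langle(\nabla_{\widehat X}\widehat Y)_o,\widehat Z_o\rangle=-\langle[X,Y]_{\mm},Z\rangle\qquad\text{for all }X,Y,Z\in\mm,$$
i.e. $(\nabla_{\widehat X}\widehat Y)_o=-\tfrac12[X,Y]_{\mm}$: its part symmetric in $(X,Y)$ vanishes because $[\xi,\xi]_{\mm}=0$, and since the $\widehat Z_o$, $Z\in\mm$, span $T_oM$, this gives $(\nabla_{\widehat\xi}\widehat\xi)_o=0$.

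To establish the displayed formula I would feed $\widehat X,\widehat Y,\widehat Z$ into Koszul's identity. The functions $\langle\widehat X,\widehat Y\rangle$ are genuinely non-constant on $M$, so the three derivative terms cannot be discarded; instead I would rewrite each of them using the Killing equation in the form $\LL_{\widehat X}g=0$, i.e. $\widehat X\langle\widehat Y,\widehat Z\rangle=\langle[\widehat X,\widehat Y],\widehat Z\rangle+\langle\widehat Y,[\widehat X,\widehat Z]\rangle$, together with the anti-homomorphism relation $[\widehat X,\widehat Y]=-\widehat{[X,Y]}$ for fundamental fields of a left action. After substituting and collecting terms one is left, at $o$, with a combination of $\langle[X,Y]_{\mm},Z\rangle$, $\langle[X,Z]_{\mm},Y\rangle$ and $\langle X,[Y,Z]_{\mm}\rangle$ (using $\Ad_H(\mm)\subseteq\mm$, equivalently $[\hh,\mm]\subseteq\mm$, to retain only $\mm$-components, and the $\Ad_H$-invariant inner product on $\mm$ for the pointwise values). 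The naturally reductive identity $\langle[U,V]_{\mm},W\rangle=\langle U,[V,W]_{\mm}\rangle$, combined with antisymmetry of the bracket, then forces the last two contributions to cancel, leaving exactly $-\langle[X,Y]_{\mm},Z\rangle$.

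The main obstacle is precisely this Koszul bookkeeping: the fundamental vector fields give a globally defined frame but it is neither orthonormal nor parallel, so every derivative term must be tracked, and the Killing property is what converts them into purely algebraic bracket terms, after which the naturally reductive condition is exactly what makes the unwanted terms cancel — without it the $(X,Y)$-symmetric part of $(\nabla_{\widehat X}\widehat Y)_o$ would not vanish and the one-parameter orbits would fail to be geodesics. Two auxiliary points need care: that $d\pi_e|_{\mm}$ is $H$-equivariant and is an isometry $\mm\to T_oM$ (so that $\langle\widehat X_o,\widehat Y_o\rangle=\langle X_{\mm},Y_{\mm}\rangle$), which uses $\Ad_H(\mm)\subseteq\mm$ and $\Ad_H$-invariance of $\langle\cdot,\cdot\rangle$; and the sign in $[\widehat X,\widehat Y]=-\widehat{[X,Y]}$, which is the source of the factor $\tfrac12$ and must be applied consistently. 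As an alternative route one could instead endow $G$ with the left-invariant metric that is the orthogonal sum of $\langle\cdot,\cdot\rangle$ on $\mm$ and an $\Ad_H$-invariant inner product on $\hh$; then $\pi\colon G\to M$ becomes a Riemannian submersion, the subgroups $t\mapsto\exp(t\xi)$ with $\xi\in\mm$ are horizontal, and horizontal geodesics of a Riemannian submersion project to geodesics of the base, reducing everything to showing that $t\mapsto\exp(t\xi)$ is a geodesic of $G$ — which by Milnor's formula for left-invariant metrics is once again equivalent to the naturally reductive identity.
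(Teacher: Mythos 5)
Your argument is correct: reducing to $t=0$ via the isometries $\tau_{t_0}$, identifying $\dot\gamma_\xi$ with the fundamental (Killing) field $\widehat\xi$, and running the Koszul formula with the Killing identity and $[\widehat X,\widehat Y]=-\widehat{[X,Y]}$ does yield $2\langle(\nabla_{\widehat X}\widehat Y)_o,\widehat Z_o\rangle=-\langle[X,Y]_{\mm},Z\rangle$ under the naturally reductive identity, and hence $(\nabla_{\widehat\xi}\widehat\xi)_o=0$, with the converse following from uniqueness of geodesics since $d\pi_e|_{\mm}$ is an isomorphism. The paper does not prove this proposition at all (it refers to O'Neill), and your proof is essentially the standard argument given there, so there is nothing substantive to compare.
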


We proceed by constructing Berger spheres as naturally reductive spaces following the exposition of \cite{SakaiBerger}. Let $G=SU(2)\times \RR$ and let $\gg = \mathfrak{su}(2)\oplus \RR$ be its Lie algebra. We consider the $\Ad_G$-invariant inner product on $\gg$ given by 
$$ \langle (A,x),(B,y) \rangle = -\frac{1}{2}\mathrm{Tr}(AB) + xy  \qquad \forall (A,x),(B,y) \in \gg. $$
For $\alpha \in (0,\frac{\pi}{2}]$ we define a linear subspace of $\gg$ as
$$ \hh_{\alpha} = \left\{ \left( \begin{pmatrix} i \ell\cos\alpha &0 \\ 0& -i\ell\cos\alpha \end{pmatrix},\ell\sin\alpha  \right) \in \gg\, \middle| \,\ell\in\RR\right\} .$$
Consider the closed subgroup $H_\alpha \subset G$, $H_\alpha = \exp(\hh_{\alpha})$, where $\exp$ again denotes the Lie group exponential of $G$. Explicitly, $H_\alpha$ is given as
$$ H_{\alpha} = \left\{ \left( \begin{pmatrix} e^{i\ell \cos\alpha} &0\\ 0& e^{-i\ell \cos\alpha} \end{pmatrix} , \ell \sin\alpha \right)\, \middle| \, \ell\in\RR\right\}  .$$
One checks that $G/H_\alpha$ is diffeomorphic to $S^3$.
The orthogonal complement to $\hh_{\alpha}$ in $\gg$ with respect to $\langle \cdot, \cdot \rangle$ is the space
$$  \mm_{\alpha} = \left\{ \left(\begin{pmatrix} i r \sin\alpha & a+ib \\ -a+ib & -i r \sin\alpha  \end{pmatrix},-r\cos\alpha \right) \in \gg \,\middle|\, a,b,r\in \RR \right\} . $$
A direct computation shows that $\mm_{\alpha}$ is $\Ad_{H_{\alpha}}$-invariant.
The restriction of the inner product $\langle \cdot,\cdot\rangle$ to $\mm_{\alpha}\times\mm_{\alpha}$ defines an $\Ad_{H_{\alpha}}$-invariant inner product on $\mm_{\alpha}$.
We equip the homogeneous space $G/H_{\alpha}$ with the $G$-invariant metric that is defined by this inner product and the abovementioned correspondence between $G$-invariant Riemannian metrics on $G/H_{\alpha}$ and $\Ad_{H_{\alpha}}$-invariant inner products on $\mm_{\alpha}$.

Since $\mm_{\alpha}\perp \hh_{\alpha}$ by construction, the space $M_{\alpha} = G/H_{\alpha}$ equipped with the described homogeneous metric is a naturally reductive space, see \cite[Proposition 23.29]{GalQuaint}.
Thus by Proposition \ref{PropGeodNatRed} the geodesics in $M_\alpha$ emanating from $o$ are precisely the images of the one-parameter groups in $G$ under $\pi$ of elements of $\mm_{\alpha}$.
For $\alpha = \frac{\pi}{2}$, one further observes that $G/H_\alpha$ is isometric to the round sphere $S^3$ of sectional curvature one, see \cite[p. 77]{MainSchmidt}.

The following observation gives us a strong upper bound on $\GC(M_\alpha)$. 
We refer to \cite[Section 3]{MainSchmidt} for its proof.

\begin{prop}
\label{PropBergerLeftIn}
For each $\alpha \in (0,\frac{\pi}2)$ the Berger sphere $M_{\alpha}$ is isometric to $SU(2)$ equipped with a left-invariant metric.
\end{prop}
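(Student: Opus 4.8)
The plan is to produce an explicit isometry between $M_\alpha = G/H_\alpha$ with $G = SU(2)\times\RR$ and the group $SU(2)$ equipped with a suitable left-invariant metric. The key observation is that $H_\alpha \cong \RR$ sits inside $G$ as a one-parameter subgroup whose $SU(2)$-component is the maximal torus generated by $\operatorname{diag}(i\cos\alpha,-i\cos\alpha)$ and whose $\RR$-component moves with speed $\sin\alpha$. First I would consider the group homomorphism $\Phi\colon G = SU(2)\times\RR \to SU(2)$ given by $\Phi(A,x) = A\cdot\exp\!\left(-x\tan\alpha\cdot\operatorname{diag}(i,-i)\right)$, or a similar explicit formula chosen so that $\Phi$ is surjective with kernel exactly $H_\alpha$. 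Checking that $\Phi$ is a well-defined Lie group homomorphism is a short matrix computation since the image lies in the abelian maximal torus for the subtracted factor, and one then verifies $\ker\Phi = H_\alpha$ by solving $A = \exp(x\tan\alpha\cdot\operatorname{diag}(i,-i))$ and comparing with the explicit description of $H_\alpha$. This induces a diffeomorphism $\bar\Phi\colon G/H_\alpha \xrightarrow{\ \cong\ } SU(2)$.

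Next I would transport the metric. Since $\pi_\alpha\colon G\to G/H_\alpha$ is a Riemannian submersion by construction, the metric $g_\alpha$ is determined by the restriction of $\langle\cdot,\cdot\rangle$ to the horizontal distribution $\mm = \hh_\alpha^\perp \subset\gg$. Under the isomorphism $d\bar\Phi$ at the identity coset, $\mm$ is identified with $\mathfrak{su}(2) = T_e SU(2)$, and I would compute the pushed-forward inner product on $\mathfrak{su}(2)$: it is the bi-invariant inner product on $\mathfrak{su}(2)$ modified by scaling the direction $\operatorname{diag}(i,-i)$ (the ``Hopf fiber'' direction) by a factor depending on $\alpha$, which is precisely the standard description of the Berger metric on $SU(2)\cong S^3$. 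The point is that since everything in sight — the inner product on $\gg$, the subgroup $H_\alpha$, the projection — is built from left-translations, the resulting inner product on $\mathfrak{su}(2) = T_e SU(2)$ extends by left-translation to a left-invariant metric $h_\alpha$ on $SU(2)$, and $\bar\Phi$ is then an isometry $(G/H_\alpha, g_\alpha)\to (SU(2), h_\alpha)$ essentially by naturality of the Riemannian submersion construction with respect to the $G$-action (equivalently, the $SU(2)$-factor of $G$ acts by isometries on both sides and the isometry is equivariant, so it suffices to match the metrics at one point).

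The main obstacle I expect is the bookkeeping in identifying $\ker\Phi$ precisely with $H_\alpha$ and getting the constants right: one must be careful that the $SU(2)$-component of $H_\alpha$ is generated by $\operatorname{diag}(i\cos\alpha,-i\cos\alpha)$ rather than $\operatorname{diag}(i,-i)$, so the homomorphism $\Phi$ must incorporate the factor $\cos\alpha/\sin\alpha = \cot\alpha$ (or its reciprocal) correctly, and one should check that $\Phi$ descends to a genuine diffeomorphism and not merely a covering (this is where surjectivity of $\Phi$ and the exact matching $\ker\Phi = H_\alpha$, both dimension-$1$ subgroups, must be verified rather than just dimension count). A subtlety worth flagging is that the formula $\exp(-x\tan\alpha\,\operatorname{diag}(i,-i))$ is only well-defined modulo $2\pi$ in the exponent, but this causes no problem because $H_\alpha$ is the full line $\{(\dots,\ell\sin\alpha)\mid\ell\in\RR\}$, which already wraps around the torus infinitely often; alternatively one avoids the issue entirely by checking the kernel at the Lie algebra level, $d\Phi(\hh_\alpha) = 0$ and $\dim\ker d\Phi = 1$, and then noting $H_\alpha$ is connected. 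Once the diffeomorphism and the metric identification are in place, the isometry statement follows, and indeed this is the key input allowing Corollary \ref{CorLieGroup} and Corollary \ref{CorLieGroupTrivCov} to be applied to compute $\GC(M_\alpha)$.
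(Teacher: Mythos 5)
The central step of your construction fails as stated: there is no Lie group homomorphism $\Phi\colon SU(2)\times\RR\to SU(2)$ with $\ker\Phi=H_\alpha$, because kernels of homomorphisms are normal subgroups and $H_\alpha$ is not normal in $SU(2)\times\RR$ for $\alpha\in(0,\tfrac{\pi}{2})$ --- its $SU(2)$-component is a noncentral one-parameter subgroup of the maximal torus, and conjugating $(\diag(e^{i\ell\cos\alpha},e^{-i\ell\cos\alpha}),\ell\sin\alpha)$ by a generic $(B,0)$ moves that component off the diagonal torus. Concretely, your candidate $\Phi(A,x)=A\exp\bigl(-xc\,\diag(i,-i)\bigr)$ is not multiplicative, since $\exp\bigl(-xc\,\diag(i,-i)\bigr)$ does not commute with a general second factor $B\in SU(2)$, so the ``short matrix computation'' cannot go through, and checking $d\Phi(\hh_\alpha)=0$ at the Lie algebra level does not rescue a map that is not a homomorphism to begin with. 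In addition, with $c=\tan\alpha$ the map is not even constant on the cosets $(A,x)H_\alpha$; the correct constant is $c=\cot\alpha$ (you flagged this ambiguity, and it is fixable).

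The construction is salvageable once the homomorphism/kernel language is dropped: with $c=\cot\alpha$ one checks directly that $(A,x)\mapsto A\exp\bigl(-x\cot\alpha\,\diag(i,-i)\bigr)$ is constant on each coset $(A,x)H_\alpha$, hence descends to a diffeomorphism $G/H_\alpha\to SU(2)$. This is precisely the inverse of the simpler map used in the paper, $f\colon SU(2)\to M_\alpha$, $f(A)=\pi(A,0)$: every coset $[(A,x)]$ has a unique representative with vanishing $\RR$-component, namely $\bigl(A\exp(-x\cot\alpha\,\diag(i,-i)),0\bigr)$, obtained by taking $\ell=-x/\sin\alpha$. Your second half is fine and is essentially the paper's argument: pull back $g_\alpha$ along this diffeomorphism and deduce left-invariance from the facts that $SU(2)\times\{0\}\subset G$ acts by isometries on $M_\alpha$, that the diffeomorphism intertwines this action with left translation on $SU(2)$, and that it therefore suffices to compare metrics at the identity; the explicit identification of the resulting inner product as a Berger-type rescaling of the bi-invariant metric in the Hopf direction is not needed for the statement.
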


Combining Proposition \ref{PropBergerLeftIn} with Corollary \ref{CorLieGroup} yields
\begin{equation}
\label{EqBerger}
\GC(M_\alpha) \leq \GC_o(\Cut_o(M_\alpha))+1, 
\end{equation}
where $o= \pi(1)$. To compute $\GC_o(\Cut_o(M_\alpha))$, we will outline the results from \cite{SakaiBerger} about the cut loci of $M_\alpha$. For $\alpha= \frac\pi{2}$, we already know that $\GC(M_{\frac{\pi}{2}})=2$, see Example \ref{ExampleGC}.(1). Thus, in the following we fix $\alpha \in (0,\frac{\pi}{2})$.

Let $S\subset \mm_\alpha$ denote the unit sphere in $\mm_\alpha$ with respect to the norm induced by $\langle \cdot, \cdot\rangle$ and let $D\pi_e:\gg \to T_oM_\alpha$ denote the differential of $\pi$ in the unit $e \in G$. Consider the isometric isomorphism of vector spaces
$$\varphi:= D\pi_e|_{\mm_\alpha}: \mm_\alpha \stackrel\cong\longrightarrow T_oM_\alpha.$$  Then $\varphi$ maps $S$ to the unit sphere in $T_oM_\alpha$. Let $\psi: T_oM_\alpha \to T_oM_\alpha$ be the radial homeomorphism which maps the unit sphere homeomorphically onto the tangent cut locus $\Cuttil_o(M_\alpha)$ of $o$ in $M_\alpha$. Then the map $F: S \to \Cuttil_o(M_\alpha)$, $F:= \psi \circ \varphi|_{S}$ is a homeomorphism. We consider $e_1,e_2 \in \mm_\alpha$ given by 
$$  e_1 = \left(\begin{pmatrix} 0&1\\-1&0 \end{pmatrix},0  \right),\qquad  e_2 = \left(\begin{pmatrix} 0&i\\i&0 \end{pmatrix},0  \right). $$
Let $U := \mathrm{span}_{\RR}(\{e_1,e_2\}) $ and let $r_U: \mm_\alpha \to \mm_\alpha$ denote the reflection through $U$.  Let $\mm_1$ and $\mm_2$ denote the two components of $\mm_\alpha \setminus U$ and put $D_i :=S\cap \mm_i$ for $i \in \{1,2\}$. By the results of \cite[p. 151]{SakaiBerger}: 
\begin{itemize}
	\item $\Cut_o(M_\alpha) = (\exp_o \circ F)(S)$.
	       \item $\exp_o \circ F|_{\overline{D}_1}$ and $\exp_o \circ F|_{ \overline{ D}_2 }$ are injective.
	       \item $(\exp_o  \circ F)(v) = (\exp_o \circ F)(r_U(v))$ for all $v \in D_1 \cup D_2$.
\end{itemize}
Hence, the map $\exp_o\circ F|_{\overline{D}_1}$ is a bijective continuous map from a closed disk onto the cut locus $\Cut_o(M_{\alpha})$.
Since the disk is compact and $\Cut_o(M_\alpha)$ is a Hausdorff space, this shows that $\Cut_o(M_\alpha)$ is homeomorphic to a closed disk. Moreover, $\exp_o\circ F|_{ \overline{D}_i }:\overline{D}_i \to M_\alpha$ is an embedding of $\overline {D}_i$ onto $\Cut_o(M_\alpha)$ for $i \in \{1,2\}$.

\begin{theorem}
\label{TheoremGCBerger}
For all $\alpha \in (0,\frac\pi{2}]$, it holds that $\GC(M_\alpha) = 2.$
\end{theorem}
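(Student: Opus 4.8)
The plan is to prove the two inequalities $\GC(M_\alpha)\ge 2$ and $\GC(M_\alpha)\le 2$ separately.

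For the lower bound I would argue as follows. Since $M_\alpha$ is a closed manifold, the cut locus $\Cut_o(M_\alpha)$ is non-empty, hence $\Cut(M_\alpha)\ne\emptyset$, and trivially $\mathrm{int}(M_\alpha\times M_\alpha)\cap\Cut(M_\alpha)\ne\emptyset$. By the Corollary following Proposition~\ref{PropContMPCut} this already forces $\GC(M_\alpha)=\GC_{M_\alpha}(M_\alpha\times M_\alpha)\ge 2$. (Alternatively one could invoke $\GC(M_\alpha)\ge\TC_{M_\alpha}(M_\alpha\times M_\alpha)=\TC(S^3)=2$ via Remark~\ref{RemarkTC}.)

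For the upper bound, by~\eqref{EqBerger} (and, in the case $\alpha=\tfrac\pi2$, by Corollary~\ref{CorLieGroup} applied to $S^3=SU(2)$ with its bi-invariant, hence left-invariant, metric) it suffices to show $\GC_o(\OCut_o(M_\alpha))\le 1$, i.e.\ that $\{o\}\times\OCut_o(M_\alpha)$ is itself a locally compact subset of $M_\alpha\times M_\alpha$ carrying a continuous geodesic motion planner. For $\alpha=\tfrac\pi2$ the manifold $M_\alpha$ is the round $3$-sphere, $\OCut_o(M_\alpha)$ is a single point, and the claim is immediate. For $\alpha\in(0,\tfrac\pi2)$ I would feed in Sakai's description recalled above: since $\OCut_o(M_\alpha)$ is homeomorphic to the interior of a closed $2$-disk it is locally compact, so $\{o\}\times\OCut_o(M_\alpha)$ is locally compact in $M_\alpha\times M_\alpha$; moreover $\Phi:=\exp_o\circ F|_{D_1}\colon D_1\to\OCut_o(M_\alpha)$ is a homeomorphism. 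Using it I would define
$$s\colon\{o\}\times\OCut_o(M_\alpha)\to GM_\alpha,\qquad (s(o,q))(t)=\exp_o\bigl(t\,F(\Phi^{-1}(q))\bigr),\quad t\in[0,1].$$
Because $F(\Phi^{-1}(q))\in\Cuttil_o(M_\alpha)$ is the tangent cut vector in its own direction, each $s(o,q)$ is a minimal geodesic, and it runs from $\exp_o(0)=o$ to $\exp_o(F(\Phi^{-1}(q)))=\Phi(\Phi^{-1}(q))=q$; hence $s$ is a geodesic motion planner. Its continuity follows since $\Phi^{-1}$, $F$ and $\exp_o$ are continuous and $v\mapsto\bigl(t\mapsto\exp_o(tv)\bigr)$ is continuous from $T_oM_\alpha$ into $PM_\alpha$ with the compact-open topology. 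Thus $\GC_o(\OCut_o(M_\alpha))=1$, so $\GC(M_\alpha)\le 2$, and together with the lower bound this yields $\GC(M_\alpha)=2$.

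The substantive input is entirely Sakai's structural description of $\Cut_o(M_\alpha)$ and $\OCut_o(M_\alpha)$ together with Proposition~\ref{PropBergerLeftIn}; once these are available, the only points needing (routine) care are the local compactness of $\{o\}\times\OCut_o(M_\alpha)$, supplied by the disk model, and the continuity of $s$ in the compact-open topology. I therefore do not expect a genuine obstacle in this argument — the real difficulty of the example lies in establishing the geometric facts recalled before the theorem, not in the present deduction.
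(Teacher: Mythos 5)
Your proposal is correct and follows essentially the same route as the paper: the upper bound comes from the left-invariance of the Berger metric via \eqref{EqBerger} together with a single continuous geodesic motion planner on $\{o\}\times\OCut_o(M_\alpha)$ built from one sheet $D_1$ of Sakai's two-to-one description (your $\exp_o\bigl(t\,F(\Phi^{-1}(q))\bigr)$ is exactly the paper's $\exp_o(t\cdot f(q))$), and the lower bound is $\GC(M_\alpha)\ge\TC(S^3)=2$. The only cosmetic differences are that the paper cites Recio-Mitter for the round case $\alpha=\tfrac{\pi}{2}$ where you argue it directly, and it invokes the naturally reductive description of geodesics where you argue minimality from the tangent cut vector property; both variants are sound.
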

\begin{proof}
For $\alpha = \frac{\pi}{2}$, i.e. the case of a round metric, this is observed in \cite[Proposition 4.1]{RecioMitter}, so we will only consider the case of $\alpha \in (0,\frac{\pi}{2})$. In the notation from above, we put $E:= F(\overline{D}_1)$ and let $f: \Cut_o(M_\alpha) \to E$, $f := (\exp_o\circ F|_E)^{-1}$. Define
$$s:\{o\} \times \Cut_o(M_\alpha) \to GM_\alpha, \qquad (s(o,q))(t) = \exp_o(t\cdot f(q)) \quad \forall t \in [0,1].$$
By Proposition \ref{PropGeodNatRed}, the map $s$ is a continuous geodesic motion planner, which shows $$\GC_o(\Cut_o(M_\alpha))=1$$ and thus  $\GC(M_\alpha)\leq 2$ by \eqref{EqBerger}. Since $\GC(M_\alpha) \geq \TC(S^3)=2$, this shows the claim.
\end{proof}
\begin{remark}
\begin{enumerate}
    \item 
As Recio-Mitter has shown in \cite[Example 2.4]{RecioMitter}, there exists a Riemannian metric $g_m$ on $S^3$ for which $\GC(S^3,g_m) =3$. This shows that also in the case of $S^3$, the value of $\GC$ depends on the chosen metric.
\item The cut locus of a point in the Berger sphere $M_{\alpha}$ for $0<\alpha<\frac{\pi}{2}$ is a closed disk.
It therefore seems tempting to determine the geodesic complexity of $M_{\alpha}$ via a stratification of this cut locus similarly to what we have done in previous sections.
More precisely, an obvious stratification of a closed disk is given by taking one stratum as its interior and another stratum as its boundary.
However, this is not an inconsistent stratification as in Definition \ref{DefInconsistent} since we would then obtain $\GC(M_{\alpha})\geq 3$, whereas we have shown that $\GC(M_{\alpha}) = 2$.
\item As this example is particularly instructive, we want to sketch briefly how to show directly that the stratification from the previous paragraph is not inconsistent.
Let $K\subset T_o M_{\alpha}$ be the union of the injectivity domain with the tangent cut locus $\Cuttil_o(M_{\alpha})$.
Using the same notation as in the exposition above, put 
$$ \qquad \quad  S_1 := (\exp_o\circ F)(\mathring{D}_1) \subset \Cut_o(M_{\alpha}) \qquad \text{and}\qquad S_2 := (\exp_o\circ F)(\partial D_1)\subset \Cut_o(M_{\alpha}).     $$
Under the identification of $\Cut_o(M_{\alpha})$ and a closed disk, this is the decomposition from part (2) of this remark. Evidently, this is a stratification in the sense of Definition \ref{DefStrat}.

Let $x\in S_2$ and let $U\subset M_{\alpha}$ be a neighborhood of $x$.
For sufficiently small $U$, the intersection $U\cap S_1$ has only one connected component which we call $Z$.
We claim that
$$ \Cuttil_o(M_{\alpha})\cap \exp_o^{-1}(\{x\}) \cap \overline{\exp_K^{-1}(Z)} \neq \emptyset .    $$
By the above discussion of $\Cut_o(M_{\alpha})$, the intersection $\Cuttil_o(M_{\alpha}) \cap \exp_o^{-1}(\{x\})$ consists of a single point $v\in T_oM_\alpha$.
By choosing a sequence in $Z$ converging to $x$ and recalling that $\exp_o\circ F|_{\overline{D}_i}$ is a homeomorphism for $i\in \{1,2\}$, we see that
$$  v\in \overline{\exp_K^{-1}(Z)}.    $$
This shows that
$$ \Cuttil_o(M_{\alpha})\cap \exp_o^{-1}(\{x\}) \cap \overline{\exp_K^{-1}(Z)} = \{v\} \neq \emptyset  . $$
Hence, the stratification $(S_1,S_2)$ of $\Cut_o(M_{\alpha})$ is not inconsistent.

\end{enumerate}
\end{remark}

\section{Explicit upper bounds for symmetric spaces}
\label{sec8}
In \cite[Theorem 5.3]{Sakai1}, Sakai has determined the cut loci of compact simply connected irreducible symmetric spaces. 
He showed that their cut loci always allow for stratifications  for which each stratum is a submanifold.
Since every symmetric space is a Riemannian product of irreducible symmetric spaces, Sakai's results are enough to determine the cut loci of compact simply connected symmetric spaces in general, see our explanations on cut loci of product manifolds in Example \ref{ExampleFlatTorus}. 

In this section, we will first apply the results from Section \ref{sec5} to find an upper bound for the geodesic complexity of a compact simply connected irreducible symmetric space. From Sakai's results, in particular \cite[Proposition 4.10]{Sakai1}, we will further derive estimates on the subspace geodesic complexities of the strata of a cut locus. These numbers appeared on the right-hand side of the inequality in Corollary \ref{CorUpperStrat} and we will show that they can be estimated from above by certain sectional categories. As a result, we will obtain an upper bound for the geodesic complexity of compact simply connected irreducible symmetric spaces given purely in terms of categorical invariants.

We begin by summarizing the main results of \cite{Sakai1}, stated here in the form of \cite[Section 4]{Sakai2}. We assume basic knowledge on symmetric spaces that is provided by textbooks in Riemannian geometry like \cite{Helgason} or \cite{Petersen}.  In the following, we always let $Df_x$ denote the differential of a differentiable map $f$ in the point $x$. \\

Let $M=G/K$ be a compact, simply connected and irreducible symmetric space, where $(G,K)$ is a Riemannian symmetric pair. Explicitly, $G$ is a compact connected Lie group, $K$ is a closed connected Lie subgroup of $G$ and $G$ admits an involutive automorphism $\sigma:G \to G$ whose fixed point set satisfies $(\Fix(\sigma))_0 \subset K \subset \Fix(\sigma)$, where $(\Fix(\sigma))_0$ is the identity component of $\Fix(\sigma)$.

Let $\pi: G \to M$ denote the orbit space projection, let $e \in G$ denote the unit element and put $o := \pi(e) \in M$. Let $\gg$ and $\kk$ denote the Lie algebras of $G$ and $K$, respectively, and let $\mm\subset \gg$ be the $-1$ eigenspace of $D\sigma_e$. Then, since $\kk$ is the $+1$ eigenspace of $D\sigma_e$, there is a vector space decomposition $\gg=\kk\oplus\mm$. Furthermore the restriction $D\pi_e|_{\mm}:\mm \to T_oM$ is a linear isomorphism, see \cite[Theorem IV.3.3]{Helgason}. \\

In the following we give a concise overview over the most important notions related to root systems of symmetric spaces.

\begin{itemize}
	\item Let $\gg_{\CC}$ denote the complexification of $\gg$. By \cite[p. 284]{Helgason}, there exists a Cartan subalgebra $\hh\subset \gg_{\CC}$.
We recall that a \emph{root} $\alpha$ of the Lie algebra $\gg_{\CC}$ is an element of the dual space $\hh^*$ such that there is a non-zero vector $X\in\gg_{\CC}$ satisfying
$$   [H,X] = \alpha(H) X \qquad \text{for all} \ \ H\in\hh .   $$
The set of non-zero roots of the Lie algebra $\gg_{\CC}$ will be called $R$.

\item Let $\aa$ be a maximal abelian subalgebra of $\mm$, which again exists by \cite[p. 284]{Helgason}. We will call $\aa$ a \emph{Cartan subalgebra of $(G,K)$}. 

A root $\alpha\in R$ with $\alpha|_{\aa} \neq 0$ will be called a \emph{root of the symmetric pair $(G,K)$}.
The set of roots of the symmetric pair $(G,K)$ will be denoted by $R(G,K)$.
\item By choosing a certain real subspace $\hh_{\RR}$ of the Cartan subalgebra $\hh$ and defining a lexicographic ordering on $\hh_{\RR}$ one defines an ordering on the set of roots $R$, see \cite[p.173]{Helgason}.
This defines a set of positive roots $R^+\subset R$ of $\gg_{\CC}$.
The set of \emph{positive roots of $(G,K)$} is then defined as
$$  R(G,K)^+ := R^+ \cap R(G,K) .    $$
There is a maximal element of $R(G,K)^+$ with respect to this ordering, which we denote by $\delta$ and call the \emph{highest root of $(G,K)$}.
\item Let $k$ be the rank of the symmetric space $M = G/K$. A \emph{simple root of $(G,K)$} is a positive root $\alpha$ which cannot be written as a sum $\alpha =\beta + \gamma$, where $\beta,\gamma\in R(G,K)^+$.
There are precisely $k$ simple roots and one finds that every positive root can be written as a linear combination of the simple roots with non-negative integer coefficients, see \cite[Theorem VII.2.19]{Helgason}.
Denote the system of simple roots of $(G,K)$ by $\pi(G,K)$. 
\item By virtue of the chosen $\Ad_K$-invariant inner product on $\gg$ we will from now on consider the roots to be vectors in $\aa$ in order to follow \cite[Section 2]{Sakai1}. 
\end{itemize}

Based on this terminology, we next recall Sakai's results on the structure of cut loci of symmetric spaces.
In the case that there are two or more positive roots of $(G,K)$, we define a subset $\D$ of the power set of $\pi(G,K)$ by 
$$\D := \{\Delta \subset \pi(G,K) \ | \ \Delta \neq \emptyset, \ \delta \notin \Delta\}  . $$
If there is only one positive root $\gamma$, which is therefore also the only simple root and also the highest root, define
$$  \D : = \{ \{\gamma \} \}  .  $$
Let $\left< \cdot, \cdot\right>$ denote the chosen $\Ad_K$-invariant inner product on $\gg$ and consider the Weyl chamber of $\pi(G,K)$ that is given by
$$W:= \{X \in \aa \ | \left<\gamma,X \right> >0 \ \ \forall \gamma \in \pi(G,K)\}. $$
 See \cite[Section VII.2]{Helgason} for further details on Weyl chambers and their connection to root systems. 
In case there is more than one positive root, let 
$$S_\Delta := \left\{X \in \overline{W} \ | \ \left<\gamma,X\right> >0 \ \  \forall \gamma \in \Delta, \ \left<\gamma,X\right>=0 \ \ \forall \gamma \in \pi(G,K)\setminus \Delta, \ 2 \left<\delta,X\right>=1  \right\}$$
for each $\Delta \in \D$.
If there is just one positive root $\gamma$, then set
$$  S_{\{\gamma\}} := \{ X\in \aa\, | \, 2\langle \gamma, X \rangle = 1\}   ,  $$
Since $\aa$ is one-dimensional in that case, $S_{\{\gamma\}}$ consists of a single point. 

Let $\exp:\gg\to G$ be the exponential map of $G$ and put 
 $$\Exp:\mm \to M, \qquad \Exp := \pi \circ \exp|_{\mm}.$$
For $\Delta \in \D$ we let
$$\widetilde{\Phi}_\Delta: K \times S_\Delta \to M, \qquad \widetilde{\Phi}_\Delta(k,X) = \Exp(\Ad(k)(X)),$$
and put  $Z_\Delta := \{ k \in K \ | \ \Exp(\Ad(k)(X))=\Exp(X) \ \forall X \in S_\Delta\}$. One checks without difficulties that $Z_\Delta$ is a closed subgroup of $K$. As shown in \cite[Proposition 4.10]{Sakai1}, each $\widetilde{\Phi}_\Delta$ induces a differentiable embedding
$$\Phi_{\Delta}: K/Z_\Delta \times S_\Delta \to M.$$
Put $C_\Delta := \im \Phi_\Delta$ for each $\Delta \in \D$. By \cite[Theorem 5.3]{Sakai1} the cut locus of $M$ at $o$ is then given by 
$$\Cut_o(M) = \bigcup_{\Delta \in \D} C_\Delta $$
and the $C_\Delta$ satisfy
\begin{equation}
C_\Delta \cap C_{\Delta'}=\emptyset \quad \forall \Delta,\Delta'\in \D, \quad \Delta \neq \Delta', \qquad \overline{C}_\Delta= \bigcup_{\Delta' \subset \Delta} C_{\Delta'} \quad \forall \Delta \in \D. \label{EqCDeltaClosure}
\end{equation}
Let $k$ be the rank of $M$. For $i \in \{1,2,\dots,k\}$ we put 
$$\D_i := \{ \Delta \in \D \ | \ \#\Delta=i \} \qquad \text{and}\qquad C_i := \bigcup_{\Delta \in \D_i} C_\Delta.$$ 
It follows from \eqref{EqCDeltaClosure} that $(C_k,C_{k-1},\dots,C_1)$ is a stratification of $\Cut_o(M)$ and that the $C_\Delta$, where $\Delta \in \D_i$, are precisely the connected components of $C_i$. Since $M$ is a homogeneous Riemannian manifold, we thus obtain from Corollary \ref{CorUpperStrat} that
\begin{equation}
\label{EqUpperGCsymmpre}
\GC(M) \leq \secat(\ev_o:\Isom(M)\to M)\cdot \sum_{i=1}^k \max_{\Delta \in \D_i} \GC_o(C_\Delta) + 1.
\end{equation}
It remains to find upper bounds on the numbers $\GC_o(C_\Delta)$. 

\begin{prop}
\label{PropGCCDelta}
For each $\Delta \in \D$ it holds that 
$$\GC_o(C_\Delta) \leq \secat(q_\Delta: K \to K/Z_\Delta),$$
where $q_\Delta$ denotes the orbit space projection. 
\end{prop}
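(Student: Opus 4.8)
The plan is to produce, from a sectional-category decomposition of the bundle $q_\Delta : K \to K/Z_\Delta$, a decomposition of $\{o\}\times C_\Delta$ into locally compact pieces each carrying a continuous geodesic motion planner. The key point is that $\Phi_\Delta : K/Z_\Delta \times S_\Delta \to M$ is a diffeomorphism onto $C_\Delta$, and that $S_\Delta$ is convex (it is an open face of a simplex-like region inside the Weyl chamber closure), so that geodesics to points of $C_\Delta$ can be read off directly once we can continuously choose a representative $k\in K$ and the corresponding vector $\mathrm{Ad}(k)(X)\in\mm$. Since $M=G/K$ is naturally reductive (indeed symmetric), Proposition~\ref{PropGeodNatRed} applies: the unit-speed geodesics from $o$ are exactly $t\mapsto \mathrm{Exp}(t\xi)$ for $\xi\in\mm$, so specifying such a $\xi$ with $\mathrm{Exp}(\xi)=q$ is the same as specifying a minimal geodesic from $o$ to $q$ (using $\dim M$-independence of the fact that $S_\Delta$ parametrizes tangent cut vectors up to the $\mathrm{Ad}$-action, as in Sakai's description).

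First I would set $m := \secat(q_\Delta)$ and invoke Lemma~\ref{LemmaSecatDecomp} (note $K$ and $K/Z_\Delta$ are compact manifolds, hence ANRs, and $K/Z_\Delta$ is normal) to obtain pairwise disjoint locally compact subsets $B_1,\dots,B_m\subset K/Z_\Delta$ with $K/Z_\Delta=\bigcup_{j=1}^m B_j$ and continuous local sections $\tau_j : B_j \to K$ of $q_\Delta$. Next, using the diffeomorphism $\Phi_\Delta$, define
$$
E_j := \Phi_\Delta(B_j \times S_\Delta) = \{\,\mathrm{Exp}(\mathrm{Ad}(\tau_j(b))(X)) \ \mid\ b\in B_j,\ X\in S_\Delta\,\} \subset C_\Delta
$$
for $j=1,\dots,m$. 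Because $\Phi_\Delta$ is a homeomorphism onto $C_\Delta$ and the $B_j\times S_\Delta$ are pairwise disjoint and locally compact (each $B_j$ is locally compact, $S_\Delta$ is locally compact, and local compactness is preserved by homeomorphisms and finite products), the $E_j$ are pairwise disjoint, locally compact, and cover $C_\Delta$.

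Then I would define, for each $j$, the candidate motion planner $s_j : \{o\}\times E_j \to GM$ as follows: given $q\in E_j$, write $\Phi_\Delta^{-1}(q)=(b,X)$ with $b\in B_j$, $X\in S_\Delta$, set $\xi := \mathrm{Ad}(\tau_j(b))(X)\in\mm$, and let $s_j(o,q)$ be the geodesic $t\mapsto \mathrm{Exp}(t\,D\pi_e|_\mm^{-1}(\dots))$ — more cleanly, using $\exp_o|_\mm$, the curve $t\mapsto \exp_o(t\,\varphi(\xi))$ where $\varphi=D\pi_e|_\mm$. By Sakai's description $\varphi(X)$ for $X\in S_\Delta$ is a tangent cut vector, so $\varphi(\xi)=\mathrm{Ad}(\tau_j(b))$-translate of it is again a tangent cut vector of $o$ with $\exp_o$-image $q$; hence $s_j(o,q)$ is a minimal geodesic from $o$ to $q$. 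Continuity of $s_j$ follows from continuity of $\Phi_\Delta^{-1}$, of $\tau_j$, of $\mathrm{Ad}$ and of $\exp_o$, together with Proposition~\ref{PropVelocityCont} to identify the topology. This exhibits $\GC_o(C_\Delta)\le m = \secat(q_\Delta)$.

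The main obstacle I anticipate is the bookkeeping needed to verify that the vector $\xi=\mathrm{Ad}(\tau_j(b))(X)$ really is a \emph{tangent cut} vector whose $\exp_o$-image is exactly $q$ — i.e. that the geodesic $s_j(o,q)$ is genuinely minimal up to $q$ and not merely a geodesic passing through $q$. This is where one must use precisely the content of Sakai's results recalled above: $\widetilde{\Phi}_\Delta(k,X)=\mathrm{Exp}(\mathrm{Ad}(k)(X))$ parametrizes $C_\Delta\subset\Cut_o(M)$, and the geodesic $t\mapsto \mathrm{Exp}(t\,\mathrm{Ad}(k)(X))$ is minimal exactly on $[0,1]$ because $\mathrm{Ad}(k)(X)$ lies in the tangent cut locus (its norm, governed by $2\langle\delta,X\rangle=1$, is the cut time in that direction, and $\mathrm{Ad}(k)$ is an isometry of $\mm$ preserving this). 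A secondary point requiring a sentence of care is that the $E_j$ must be locally compact \emph{as subsets of $M\times M$} in the sense of the definition of $\GC_o$; this follows since $C_\Delta$ is locally closed in $M$ (by \eqref{EqCDeltaInter}–\eqref{EqCDeltaClosure}, $\overline{C}_\Delta\setminus C_\Delta=\bigcup_{\Delta'\subsetneq\Delta}C_{\Delta'}$ is closed in $\overline{C}_\Delta$), so local compactness of $E_j\subset C_\Delta$ transfers to $\{o\}\times E_j\subset M\times M$.
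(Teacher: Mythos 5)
Your proposal is correct and follows essentially the same route as the paper: apply Lemma \ref{LemmaSecatDecomp} to $q_\Delta:K\to K/Z_\Delta$, push the resulting locally compact pieces $B_j$ forward via $\Phi_\Delta(B_j\times S_\Delta)$, and define the motion planners by $t\mapsto \Exp(t\,\Ad(\tau_j(b))(X))$, which is exactly the paper's construction. Your extra remarks on minimality (via Sakai's normalization $2\langle\delta,X\rangle=1$ and $\Ad(k)$ acting isometrically on $\mm$) and on local compactness of the pieces in $M\times M$ only make explicit points the paper leaves implicit.
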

\begin{proof}
Let $r:= \secat(q_\Delta)$. Then by Lemma \ref{LemmaSecatDecomp}, there are pairwise disjoint and locally compact subsets $B_1,\dots,B_r \subset K/Z_\Delta$, such that for each $i \in \{1,2,\dots,r\}$ there is a continuous local section $s_i:B_i \to K$ of $q_\Delta$. Using these $s_i$ we define
$$
\sigma_i: \{o\} \times \Phi_\Delta(B_i \times S_\Delta) \to GM, \qquad (\sigma_i(o,\Phi_\Delta(x,X)))(t) = \Exp(t \cdot \Ad(s_i(x))(X)) ,$$
for every $i \in \{1,2,\dots,r\}$. By construction, each of the $\sigma_i$ is continuous and $\sigma_i(o,\Phi_\Delta(x,X))$ is a geodesic segment for all $(x,X) \in B_i \times S_\Delta$, $i \in \{1,2,\dots,r\}$. Moreover, 
$$(\sigma_i(o,\Phi_\Delta(x,X)))(0)=\Exp(0)=o, \quad (\sigma_i(o,\Phi_\Delta(x,X)))(1) = \Exp(\Ad(s(x))(X))= \Phi_\Delta(x,X),$$
by definition of $\Phi_\Delta$. Thus, the $\sigma_i$ are continuous geodesic motion planners. Since the sets $ \Phi_\Delta(B_1\times S_\Delta),\dots, \Phi_\Delta(B_r \times S_\Delta)$ are pairwise disjoint, locally compact and cover $\Phi(K/Z_\Delta\times S_\Delta)=C_\Delta$, this shows that $\GC_o(C_\Delta) \leq r$.
\end{proof}

Combining Proposition \ref{PropGCCDelta} with \eqref{EqUpperGCsymmpre} yields the following upper bound.

\begin{theorem}
\label{TheoremUpperSymm}
Let $(G,K)$ be a Riemannian symmetric pair and let $M=G/K$ be the corresponding symmetric space. Assume that $M$ is compact, simply connected and irreducible. Then, with $\D_i$ and $Z_\Delta$ given as above, 
$$\GC(M) \leq \secat(\ev_o:\Isom(M) \to M) \cdot \sum_{i=1}^{\rk(M)} \max_{\Delta \in \D_i} \secat(q_\Delta:K \to K/Z_\Delta)+1,$$
where $\rk(M)$ denotes the rank of $M$.
\end{theorem}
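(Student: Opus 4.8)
The plan is to deduce the statement directly from the two facts recorded immediately before it: the inequality \eqref{EqUpperGCsymmpre}, obtained by applying Corollary \ref{CorUpperStrat} to Sakai's stratification of $\Cut_o(M)$, and Proposition \ref{PropGCCDelta}, which bounds the subspace geodesic complexity of each connected stratum. So the work is essentially to chain these two inequalities together, once the stratification hypothesis of Corollary \ref{CorUpperStrat} has been checked.

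First I would note that $(C_k, C_{k-1}, \dots, C_1)$, with $k = \rk(M)$, is a stratification of $\Cut_o(M)$ of depth $k$ in the sense of Definition \ref{DefStrat} --- this is exactly what follows from the properties \eqref{EqCDeltaInter} and \eqref{EqCDeltaClosure} quoted from Sakai's work, with the strata listed from the generic piece $C_k$ down to the closed piece $C_1$, and with the connected components of $C_i$ being precisely the $C_\Delta$ for $\Delta \in \D_i$ (here one uses that $\overline{C}_\Delta$ meets only strata $C_{\Delta'}$ with $\Delta' \subseteq \Delta$, hence with $\#\Delta' \leq \#\Delta$, to get the closure relation of Definition \ref{DefStrat}(i), and the component statement for (ii)). In the notation $(S_1, \dots, S_N)$ of Definition \ref{DefStrat} one takes $N = k$ and $S_i = C_{k+1-i}$; any empty $C_i$ may be discarded without affecting the estimate below, as an empty maximum contributes $0$. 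Since $M = G/K$ is homogeneous, Corollary \ref{CorUpperStrat} then applies verbatim to this stratification and yields \eqref{EqUpperGCsymmpre}:
$$\GC(M) \leq \secat(e_o : \Isom(M) \to M) \cdot \sum_{i=1}^{\rk(M)} \max_{\Delta \in \D_i} \GC_o(C_\Delta) + 1.$$

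Finally I would invoke Proposition \ref{PropGCCDelta}, which states that $\GC_o(C_\Delta) \leq \secat(q_\Delta : K \to K/Z_\Delta)$ for every $\Delta \in \D$, and substitute this bound term by term into the inner maximum in the inequality above; this produces precisely the asserted estimate. Since the genuinely substantive inputs --- Sakai's determination of $\Cut_o(M)$, the decomposition estimate of Corollary \ref{CorUpperStrat}, and the fibrewise construction of geodesic motion planners over the $C_\Delta$ in Proposition \ref{PropGCCDelta} --- have all already been carried out, I expect no real obstacle; the only minor care needed is the matching of index conventions between Sakai's cardinality-indexed pieces $C_i$ and the ordering of strata demanded by Definition \ref{DefStrat}, which I would spell out as above.
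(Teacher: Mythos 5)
Your proposal matches the paper's own argument: the paper derives the bound by exactly this chaining of \eqref{EqUpperGCsymmpre} (itself obtained from Sakai's properties \eqref{EqCDeltaInter}, \eqref{EqCDeltaClosure} and Corollary \ref{CorUpperStrat}) with the term-by-term substitution of Proposition \ref{PropGCCDelta}. Your extra care about reindexing the strata $(C_k,\dots,C_1)$ to fit Definition \ref{DefStrat} is consistent with the paper and raises no issues.
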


\begin{cor}
\label{CorUpperSymmCat}
Let $(G,K)$ be a Riemannian symmetric pair and let $M=G/K$ be the corresponding symmetric space. Assume that $M$ is compact, simply connected and irreducible. Then, with $\D_i$ and $Z_\Delta$ given as above, 
$$\GC(M) \leq \cat(M) \cdot \sum_{i=1}^{\rk(M)} \max_{\Delta \in \D_i} \cat(K/Z_\Delta)+1.$$
\end{cor}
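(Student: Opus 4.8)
The plan is to deduce this corollary directly from Theorem \ref{TheoremUpperSymm} by bounding each sectional category that occurs there by the Lusternik--Schnirelmann category of the corresponding base space. No new geometric input is needed; the argument is purely formal, using Schwarz's comparison between sectional category and LS-category.

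First I would recall Schwarz's classical estimate: for any fibration $p:E\to B$ one has $\secat(p)\leq\cat(B)$, which is \cite[Theorem 18]{SchwarzGenus}. Applying this to the fibration $e_o:\Isom(M)\to M$ (a principal bundle by the results of Section \ref{sec5}) gives $\secat(e_o:\Isom(M)\to M)\leq\cat(M)$. Next I would observe that for each $\Delta\in\D$ the orbit space projection $q_\Delta:K\to K/Z_\Delta$ is again a fibration: indeed $Z_\Delta$ is a closed subgroup of the compact Lie group $K$, so $K\to K/Z_\Delta$ is a principal $Z_\Delta$-bundle. Hence $\secat(q_\Delta:K\to K/Z_\Delta)\leq\cat(K/Z_\Delta)$ for every $\Delta\in\D$.

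Substituting these two inequalities into the bound
$$\GC(M)\leq\secat(e_o:\Isom(M)\to M)\cdot\sum_{i=1}^{\rk(M)}\max_{\Delta\in\D_i}\secat(q_\Delta:K\to K/Z_\Delta)+1$$
from Theorem \ref{TheoremUpperSymm}, and using that for each $i$ the map $\max_{\Delta\in\D_i}(\cdot)$ is monotone in its nonnegative integer arguments while the outer sum and product are likewise monotone, one immediately obtains
$$\GC(M)\leq\cat(M)\cdot\sum_{i=1}^{\rk(M)}\max_{\Delta\in\D_i}\cat(K/Z_\Delta)+1,$$
which is the assertion.

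I do not expect any real obstacle here. The only points deserving a moment's care are verifying that $q_\Delta$ is genuinely a fibration so that Schwarz's inequality applies (handled by the closed-subgroup remark above) and checking that the termwise estimates propagate through the nested $\sum_i\max_{\Delta\in\D_i}(\cdot)$ expression, which is clear since all quantities involved are nonnegative.
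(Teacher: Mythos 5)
Your proposal is correct and is essentially the paper's own argument: the corollary follows immediately from Theorem \ref{TheoremUpperSymm} by applying Schwarz's bound $\secat(p)\leq\cat(B)$ (\cite[Theorem 18]{SchwarzGenus}) to $e_o:\Isom(M)\to M$ and to each $q_\Delta:K\to K/Z_\Delta$, and then using monotonicity of the sum and maxima. Your extra remark that $q_\Delta$ is a fibration because $Z_\Delta$ is a closed subgroup of the compact Lie group $K$ is a fine (and correct) point of care that the paper leaves implicit.
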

\begin{proof}
This is an immediate consequence of Theorem \ref{TheoremUpperSymm} and \cite[Theorem 18]{SchwarzGenus}.
\end{proof}

We want to conclude by applying the upper bounds to two examples of compact symmetric spaces whose cut loci have already been discussed in the works of Sakai, more precisely in \cite[Example 5.4]{Sakai1} and \cite[Section 4.2]{Sakai3}.

\begin{example}
Consider the complex projective space $\CC P^n = U(n+1)/(U(1)\times U(n))$ with the Fubini-Study metric. This is a compact and simply connected symmetric space of rank one. Its cut locus is studied in detail in \cite[Example 5.4]{Sakai1}.
Let $G=U(n+1)$, let $\gg=\mathfrak{u}(n+1)$ be its Lie algebra and let $K=U(1)\times U(n)$. Let $\gg=\kk\oplus \mm$ denote the decomposition of $\gg$ with respect to the symmetric pair $(G,K)$. By the same methods as in \cite[p. 452]{Helgason}, which treats the Lie algebra of $SU(n)$, one computes that
$$ \mathfrak{k} = \Big\{ \begin{pmatrix} ia& 0 \\ 0 & \xi \end{pmatrix} \,\Big| \, a\in\RR, \xi\in \mathfrak{u}(n) \Big\} \qquad \text{and} \qquad \mathfrak{m} = \Big\{ \begin{pmatrix} 0 & \Bar{u}^T \\ -u & 0 \end{pmatrix} \,\Big|\, u\in \CC^n \Big\}  .$$
Then $\aa= \mathrm{span}_{\RR}(\{H_0\})$ is a Cartan subalgebra of $(G,K)$, where $H_0=(h_{ij})\in \gg$ is given by $$h_{ij} = \begin{cases}
\frac{\pi}{2} & \text{if } (i,j)=(1,n+1), \\
-\frac{\pi}{2} & \text{if } (i,j)=(n+1,1), \\
0 & \text{else}. 
\end{cases}$$
In particular, every system of simple roots of $(G,K)$ consists of a unique element. Let $o$ be the equivalence class of the neutral element of  $G$ in $G/K=\CC P^n$. Then $\Cut_o(M)$ consists of a unique submanifold, given by 
 $$\Cut_o(M)= \{\Exp(\Ad(k)(H_0)) \ | \ k \in U(1) \times U(n) \}.$$ Sakai further showed that $$Z_0 := \{ k \in U(1) \times U(n) \ | \ \Exp(\Ad(k)(H_0))=\Exp(H_0))\}$$ can be identified with $Z_0 =U(1) \times U(n-1)\times U(1).$
Hence, by Proposition \ref{PropGCCDelta},
\begin{equation}
\label{EqGCsecatUn}
\GC_M(\Cut_o(M)) \leq \secat( U(1)\times U(n)\to (U(1)\times U(n)) / (U(1)\times U(n-1)\times U(1)) ) . 
\end{equation}
 One easily checks that the map
 \begin{align*}
 \varphi: (U(1)\times U(n)) / (U(1)\times U(n-1)\times U(1)) &\to U(n)/ (U(n-1)\times U(1))=\CP^{n-1}, \\ \varphi([z,A])&=[A],
  \end{align*}
 where $(z,A) \in U(1)\times U(n)$, is a well-defined homeomorphism. Let $p:U(n) \to \CP^{n-1}$ denote the principal $U(1)$-bundle over the homogeneous space $\CP^{n-1}$. Assume that $s : V\to U(n)$ is a continuous local section of $p$ over a subset $V\subset \CC P^{n-1}$. Then we obtain a continuous local section $\widetilde{s}:V \to U(1) \times U(n)$ of the principal fiber bundle 
 $$U(1)\times U(n)\to (U(1)\times U(n)) / (U(1)\times U(n-1)\times U(1))$$
 by setting $\widetilde{s}(\varphi^{-1}(p)) = (z_0,s(p))$ for $p \in V$, where $z_0 \in U(1)$ is a fixed element.
 This shows that
 $$    \secat( U(1)\times U(n)\to (U(1)\times U(n)) / (U(1)\times U(n-1)\times U(1)) )   \leq \secat(U(n)\to\CC P^{n-1}) . $$
Hence, we derive from \eqref{EqGCsecatUn} that
$$ \GC_o(\Cut_o(M)) \leq \secat(U(n)\to \CC P^{n-1}) \leq \cat(\CC P^{n-1}) = n, $$
where we used \cite[Theorem 18]{SchwarzGenus} for the second inequality. 
The fact that $\cat(\CC P^{n-1})=n$ is shown in \cite[Example 1.51]{CLOT}. Eventually, by Theorem \ref{TheoremGCCut} and the same references,
\begin{align*}
    \GC(\CC P^n )  & \leq  \secat(U(n+1) \to \CC P^n) \GC_o(\Cut_o(M)) +1 \\
    &\leq  \cat(\CC P^n) \cat(\CC P^{n-1} )+ 1 \\
    &= (n+1) n +1 .
\end{align*}
Since $\TC(\CP^n)=2n+1$ as computed in \cite[Lemma 28.1]{FarberSurveyTC}, we derive using Remark \ref{RemarkTC}.(1) that 
$$2n+1 \leq \GC(\CP^n) \leq (n+1)n+1 \qquad \forall n \in \NN.$$
For $n=2$, this shows that $\GC(\CC P^2) \in \{5,6,7\}$.
\end{example}

\begin{example}
Consider the complex Grassmann manifold $G_{2}(\CC^4)=U(4)/(U(2)\times U(2))$. As a quotient of a compact Lie group by a closed subgroup, $G_2(\CC^4)$ is compact. Let $V_2(\CC^4)$ be the corresponding complex Stiefel manifold. As shown in \cite[Example 4.54]{Hatcher}, $V_2(\CC^4)$ is simply connected. Since the fiber $U(2)$ of the fibration $V_2(\CC^4)\to G_2(\CC^4)$ is connected, it follows from the long exact homotopy sequence of this fibration that $G_2(\CC^4)$ is simply connected as well. The cut loci of $G_2(\CC^4)$ are discussed in \cite[Section 4.2]{Sakai3} and \cite[Example 5.5]{Sakai1}. The corresponding decomposition of the Lie algebra $\mathfrak{g}= \mathfrak{u}(4)$ of $G=U(4)$ is given by $\mathfrak{g}=\kk\oplus\mathfrak{m}$ where
$$ \kk = \Big\{ \begin{pmatrix} \alpha&0\\ \ 0 & \beta \end{pmatrix} \,\Big|\, \alpha,\beta\in \mathfrak{u}(2)\Big\} \qquad \text{and} \qquad \mathfrak{m} = \Big\{ \begin{pmatrix} 0& \xi\\ -\bar{\xi}^T & 0 \end{pmatrix} \, \Big| \, \xi \in M_2(\CC) \Big\} . $$
Here, $\kk$ is the Lie algebra of $K=U(2) \times U(2)$. A Cartan subalgebra $\aa\subset \mm$ is spanned by 
$$ e_1 := \frac{1}{2\pi} \begin{pmatrix} 0&0&1&0 \\ 0&0&0&0 \\ -1&0&0&0 \\ 0&0&0&0 \end{pmatrix} \qquad \text{and} \qquad e_2 :=  \frac{1}{2\pi}\begin{pmatrix} 0&0&0&0 \\ 0&0&0&1 \\ 0&0&0&0 \\ 0&-1&0&0 \end{pmatrix}.$$

By \cite[p. 143]{Sakai1}, one can define positive roots and simple roots of $(U(4),U(2)\times U(2))$ in such a way that $2e_1 = \delta$ is the highest root and that a system of simple roots is given by
$$ \pi(G,K) = \{ \gamma_1, \gamma_2\}, \quad \text{where } \gamma_1:=2e_2, \ \gamma_2:=e_1-e_2. $$ 
Thus, in the notation from above, $\D= \{\Delta_0,\Delta_1,\Delta_2\}$, where $\Delta_0=\{\gamma_1,\gamma_2\}$, $\Delta_1=\{\gamma_1\}$ and $\Delta_2 = \{\gamma_2\}$. With $S_i := S_{\Delta_i}$ for $i \in \{0,1,2\}$, one computes that
$$S_0 = \{\pi^2e_1 + \lambda e_2 \in \aa \ | \ \lambda \in (0,\pi^2)\}, \quad S_1 = \{\pi^2e_1\}, \quad S_2 = \{\pi^2(e_1+e_2)\}.$$
We further put $Z_i:= Z_{\Delta_i}$ for each $i$. 
By computing the corresponding matrix exponentials, we obtain
$$ Z_1 = \Big\{ \mathrm{diag}(a,b,c,d)\in U(2) \times U(2) \ \Big|\ a,b,c,d\in U(1)\Big\} \cong U(1)^4 .$$
Since $U(2)/(U(1)\times U(1))$ is diffeomorphic to $\CP^1\cong S^2$, it follows that $K/Z_1 \cong S^2 \times S^2$. Hence, $\cat(K/Z_1)=\cat(S^2 \times S^2)\leq 3$ by the product inequality for $\cat$, see \cite[Theorem 1.37]{CLOT}. 
One further computes by matrix exponentials that $Z_2 = K$, so $K/Z_2$ consists of a single point, which yields $\cat(K/Z_2)=1$. 
By \cite[Lemma 4.9]{Sakai1}, for every fixed $X \in S_0$ we obtain $Z_0 = \{k \in K \ | \ \Exp(\Ad(k)(X))=\Exp(X)\}$. 
We choose $X=\pi^2e_1+\frac{\pi^2}{2}e_2$ and claim that
$$ Z_{0} = \left\{ \mathrm{diag}(a,b,c,b) \in U(2) \times U(2) \  \middle|\ a,b,c\in U(1)\right\}.   $$
To see this, we compute that
$$ \exp(X) = \begin{pmatrix} 0&0&1&0 \\ 0 &\frac{1}{\sqrt{2}} & 0 &\frac{1}{\sqrt{2}} \\ -1 &0&0&0 \\ 0&\frac{-1}{\sqrt{2}}&0&\frac{1}{\sqrt{2}}  \end{pmatrix} .      $$
The condition $\Exp(\Ad_k(X)) = \Exp(X)$ is equivalent to 
$  \exp(-X)k\exp(X)\in K . $
One then checks by an explicit computation that $k\in K$ satisfies this condition if and only if $$ k = \mathrm{diag}(a,b,c,b), \qquad \text{with}\,\,a,b,c\in U(1).   $$ 
Hence, $K\to K/Z_0$ is a bundle with typical fiber $U(1)^3$, where an inclusion of the fiber is given by $$f:U(1)^3 \to U(2) \times U(2), \qquad f(a,b,c)=\diag(a,b,c,b).$$ We want to show that $K/Z_0$ is simply connected. By the long exact sequence of homotopy groups of that bundle, it suffices to show that $f_*:\pi_1(U(1)^3)\to \pi_1(U(2)^2)$ is surjective. Let $\gamma:[0,1]\to U(1)$, $\gamma(t)=e^{2\pi i t}$. We observe that $\pi_1(U(1)^3)\cong \ZZ^3$. A set of generators of $\pi_1(U(1)^3)$ is given by the homotopy classes of the loops $$\gamma_1,\gamma_2,\gamma_3:[0,1] \to U(1)^3, \quad \gamma_1(t)=(\gamma(t),1,1), \quad\gamma_2(t)=(1,\gamma(t),1), \quad \gamma_3(t)=(1,1,\gamma(t)).$$ We further observe that $\pi_1(U(2)^2)\cong \ZZ^2$, where a set of generators is given by the homotopy classes of $$\beta_1,\beta_2:[0,1] \to U(2)\times U(2), \qquad \beta_1(t)=\diag(\gamma(t),1,1,1), \qquad \beta_2(t)=\diag(1,1,\gamma(t),1).$$ Here, we used \cite[Example VII.8.1]{Bredon}. One immediately sees that $f\circ \gamma_1 = \beta_1$ and $f \circ \gamma_3=\beta_2$. This shows that the image $f_*$ contains a set of generators, hence $f_*$ is surjective. Thus, $\pi_1(K/Z_0)$ is the trivial group, which implies by \cite[Theorem 1.50]{CLOT} that 
$$ \cat(K/Z_0) \leq \frac{\dim (K/Z_0)}{2}+1 = \frac52 +1 = \frac{7}{2}.$$ 
Since $\cat$ is integer-valued, we obtain $\cat(K/Z_0) \leq 3$. To employ Corollary \ref{CorUpperSymmCat}, we still need to estimate $\cat(G_2(\CC^4))$ from above.  Another use of \cite[Theorem 1.50]{CLOT} shows that $$\cat (G_2(\CC^4))\leq \frac12\dim (G_2(\CC^4))+1=5.$$ Inserting the results of our computations into Corollary \ref{CorUpperSymmCat}, we derive 
\begin{align*}
\GC(G_2(\CC^4)) &\leq \cat(G_2(\CC^4)) ( \cat(K/Z_0) + \max \{\cat(K/Z_1),\cat(K/Z_2)\})+1 \\
&\leq 5(3+3)+1= 31.
\end{align*}
By \cite[Lemma 28.1]{FarberSurveyTC} it further holds that $\TC(G_2(\CC^4))=\dim (G_2(\CC^4))+1=9$. Thus, by the previous inequality and Remark \ref{RemarkTC}, we obtain
$$9 \leq \GC(G_2(\CC^4))\leq 31.$$
\end{example}
\bibliography{GC}
 \bibliographystyle{amsalpha}
 
\end{document}